\newtheorem{proposition}{Proposition}[section]
\newtheorem{lemma}[proposition]{Lemma}
\newtheorem{corollary}[proposition]{Corollary}
\newtheorem{theorem}[proposition]{Theorem}
\theoremstyle{definition}
\newtheorem{definition}[proposition]{Definition}
\newtheorem{example}[proposition]{Example}
\theoremstyle{remark}
\newtheorem{remark}[proposition]{Remark}
\newcommand{\thlabel}[1]{\label{th:#1}}
\newcommand{\thref}[1]{Theorem~\ref{th:#1}}
\newcommand{\selabel}[1]{\label{se:#1}}
\newcommand{\seref}[1]{Section~\ref{se:#1}}
\newcommand{\sselabel}[1]{\label{sse:#1}}
\newcommand{\sseref}[1]{Subsection~\ref{sse:#1}}
\newcommand{\lelabel}[1]{\label{le:#1}}
\newcommand{\leref}[1]{Lemma~\ref{le:#1}}
\newcommand{\prlabel}[1]{\label{pr:#1}}
\newcommand{\prref}[1]{Proposition~\ref{pr:#1}}
\newcommand{\colabel}[1]{\label{co:#1}}
\newcommand{\coref}[1]{Corollary~\ref{co:#1}}
\newcommand{\relabel}[1]{\label{re:#1}}
\newcommand{\reref}[1]{Remark~\ref{re:#1}}
\newcommand{\exlabel}[1]{\label{ex:#1}}
\newcommand{\exref}[1]{Example~\ref{ex:#1}}
\newcommand{\delabel}[1]{\label{de:#1}}
\newcommand{\deref}[1]{Definition~\ref{de:#1}}
\newcommand{\eqlabel}[1]{\label{eq:#1}}
\newcommand{\equref}[1]{(\ref{eq:#1})}
\def\equuref#1#2{(\ref{eq:#1}.#2)}
\def\ra{\rightarrow}
\def\Id{{\rm Id}}
\newcommand{\Mm}{\mathcal{M}}
\def\ot{\otimes}
\def\va{\varepsilon}
\def\un{\underline}
\def\mf{\mathfrak}
\def\le{\langle}
\def\ri{\rangle}
\def\l{\lambda}
\def\r{\rho}
\def\va{\varepsilon}
\def\ra{\rightarrow}
\def\a{\alpha}
\def\b{\beta}
\def\ov{\overline}
\def\cal{\mathcal}
\def\un{\underline}
\newcommand{\gbrb}{\mbox{$\gbr\gvac{-1}\gnot{\hspace*{-4mm}\bullet}$}}
\newcommand{\YD}{\mathcal{Y\hspace*{-1mm}D}}
\newcommand{\Cc}{\cal C}
\newcommand{\Dc}{\cal D}
\newcommand{\gdbd}{\mbox{$\gdb\gvac{-1}\gnot{\hspace*{-4mm}\vspace*{2mm}\bullet}\gvac{1}$}}
\newcommand{\gevd}{\mbox{$\gev\gvac{-1}\gnot{\hspace*{-4mm}\vspace{-2mm}\bullet}\gvac{1}$}}
\def\equal#1{\smash{\mathop{=}\limits^{#1}}}
\def\equalupdown#1#2{\smash{\mathop{=}\limits^{#1}\limits_{#2}}}
 \newcommand{\gbeg}[2]{
   \unitlength=1pt
   \grrow = #2
   \grcolumn = 0
   \grcalca = #1
   \grcalcb = #2
   \multiply \grcalca by \factor
   \grwidth = \grcalca
   \multiply \grcalcb by \factor
   \begin{minipage}{\grcalca pt}
   \begin{picture}(\grcalca,\grcalcb)
   \advance \grcalcb by -\factor
   \put(0, \grcalcb){\line(1,0){\grwidth}} }
 \newcommand{\gend}{
   \put(0, \factor){\line(1,0){\grwidth}}
   \end{picture}
   {\vskip2.5ex}
   \end{minipage} }
 \newcommand{\gnl}{
   \advance \grrow by -1
   \grcolumn = 0}
 \newcommand{\gvac}[1]{       
   \advance \grcolumn by #1} 
 \newcommand{\gcl}[1]{
   \grcalca = \grcolumn
   \multiply \grcalca by \factor
   \advance \grcalca by \hfactor
   \grcalcb = \grrow
   \multiply \grcalcb by \factor
   \grcalcc = #1
   \multiply \grcalcc by \factor
   \put(\grcalca,\grcalcb) {\line(0,-1){\grcalcc}} 
   \advance \grcolumn by 1}
 \newcommand{\gcn}[4]{
   \grcalca = \grcolumn
   \multiply \grcalca by \factor
   \grcalci = #3
   \multiply \grcalci by \hfactor
   \advance \grcalca by \grcalci
   \grcalcb = \grcolumn
   \multiply \grcalcb by \factor 
   \grcalci = #3
   \advance \grcalci by #4
   \multiply \grcalci by \qfactor
   \advance \grcalcb by \grcalci
   \grcalcc = \grcolumn
   \multiply \grcalcc by \factor
   \grcalci = #4
   \multiply \grcalci by \hfactor
   \advance \grcalcc by \grcalci
   \grcalcd = \grrow
   \multiply \grcalcd by \factor 
   \grcalce = \grrow
   \multiply \grcalce by \factor 
   \grcalci = #2
   \multiply \grcalci by \tfactor
   \advance \grcalce by -\grcalci
   \grcalcf = \grrow
   \multiply \grcalcf by \factor 
   \grcalci = #2
   \multiply \grcalci by \hfactor
   \advance \grcalcf by -\grcalci
   \grcalcg = \grrow
   \multiply \grcalcg by \factor 
   \grcalci = #2
   \multiply \grcalci by \tfactor
   \multiply \grcalci by 2
   \advance \grcalcg by -\grcalci
   \grcalch = \grrow
   \advance \grcalch by -#2
   \multiply \grcalch by \factor 
   \qbezier(\grcalca,\grcalcd)(\grcalca,\grcalce)(\grcalcb,\grcalcf) 
   \qbezier(\grcalcb,\grcalcf)(\grcalcc,\grcalcg)(\grcalcc,\grcalch) 
   \advance \grcolumn by #1}
 \newcommand{\gnot}[1]{
   \grcalca = \grcolumn
   \multiply \grcalca by \factor
   \advance \grcalca by \hfactor
   \grcalcb = \grrow
   \multiply \grcalcb by \factor
   \advance \grcalcb by -\hfactor
   \put(\grcalca,\grcalcb) {\makebox(0,0){$\scriptstyle #1$}} }
 \newcommand{\got}[2]{
   \grcalca = \grcolumn
   \multiply \grcalca by \factor
   \grcalcc = #1
   \multiply \grcalcc by \hfactor
   \advance \grcalca by \grcalcc
   \grcalcb = \grrow
   \multiply \grcalcb by \factor
   \advance \grcalcb by -\tfactor
   \advance \grcalcb by -\tfactor
   \put(\grcalca,\grcalcb){\makebox(0,0)[b]{$#2$}}
   \advance \grcolumn by #1}
 \newcommand{\gob}[2]{
   \grcalca = \grcolumn
   \multiply \grcalca by \factor
   \grcalcc = #1
   \multiply \grcalcc by \hfactor
   \advance \grcalca by \grcalcc
   \put(\grcalca,0){\makebox(0,0)[b]{$#2$}}
   \advance \grcolumn by #1}
 \newcommand{\gmu}{  
   \grcalca = \grcolumn
   \advance \grcalca by 1
   \multiply \grcalca by \factor
   \grcalcb = \grrow
   \multiply \grcalcb by \factor
   \grcalcc = \factor
   \advance \grcalcc by \hfactor
   \put(\grcalca,\grcalcb){\oval(\factor,\grcalcc)[b]}
   \advance \grcalcb by -\hfactor
   \advance \grcalcb by -\qfactor
   \put(\grcalca,\grcalcb) {\line(0,-1){\qfactor}} 
   \advance \grcolumn by 2}
 \newcommand{\gcmu}{   
   \grcalca = \grcolumn
   \advance \grcalca by 1
   \multiply \grcalca by \factor
   \grcalcb = \grrow
   \advance \grcalcb by -1
   \multiply \grcalcb by \factor
   \grcalcc = \factor
   \advance \grcalcc by \hfactor
   \put(\grcalca,\grcalcb){\oval(\factor,\grcalcc)[t]}
   \advance \grcalcb by \factor
   \put(\grcalca,\grcalcb) {\line(0,-1){\qfactor}} 
   \advance \grcolumn by 2}
 \newcommand{\glm}{
   \grcalca = \grcolumn
   \multiply \grcalca by \factor
   \advance \grcalca by \hfactor
   \grcalcb = \grcalca
   \advance \grcalcb by \factor
   \grcalcc = \grrow
   \multiply \grcalcc by \factor
   \grcalcd = \grcalcc
   \advance \grcalcd by -\tfactor
   \grcalce = \grcalcd
   \advance \grcalce by -\tfactor
   \put(\grcalca, \grcalcc){\line(0,-1){\tfactor}}
   \put(\grcalca, \grcalcd){\line(1,0){\factor}}
   \put(\grcalca, \grcalcd){\line(3,-1){\factor}}
   \put(\grcalcb, \grcalcc){\line(0,-1){\factor}}
   \advance \grcolumn by 2}
 \newcommand{\grm}{
   \grcalcb = \grcolumn
   \multiply \grcalcb by \factor
   \advance \grcalcb by \hfactor
   \grcalca = \grcalcb
   \advance \grcalca by \factor
   \grcalcc = \grrow
   \multiply \grcalcc by \factor
   \grcalcd = \grcalcc
   \advance \grcalcd by -\tfactor
   \grcalce = \grcalcd
   \advance \grcalce by -\tfactor
   \put(\grcalca, \grcalcc){\line(0,-1){\tfactor}}
   \put(\grcalca, \grcalcd){\line(-1,0){\factor}}
   \put(\grcalca, \grcalcd){\line(-3,-1){\factor}}
   \put(\grcalcb, \grcalcc){\line(0,-1){\factor}}
   \advance \grcolumn by 2}
 \newcommand{\glcm}{
   \grcalca = \grcolumn
   \multiply \grcalca by \factor
   \advance \grcalca by \hfactor
   \grcalcb = \grcalca
   \advance \grcalcb by \factor
   \grcalcc = \grrow
   \advance \grcalcc by -1
   \multiply \grcalcc by \factor
   \grcalcd = \grcalcc
   \advance \grcalcd by \tfactor
   \grcalce = \grcalcd
   \advance \grcalce by \tfactor
   \put(\grcalca, \grcalcc){\line(0,1){\tfactor}}
   \put(\grcalca, \grcalcd){\line(1,0){\factor}}
   \put(\grcalca, \grcalcd){\line(3,1){\factor}}
   \put(\grcalcb, \grcalcc){\line(0,1){\factor}}
   \advance \grcolumn by 2}
 \newcommand{\grcm}{
   \grcalcb = \grcolumn
   \multiply \grcalcb by \factor
   \advance \grcalcb by \hfactor
   \grcalca = \grcalcb
   \advance \grcalca by \factor
   \grcalcc = \grrow
   \advance \grcalcc by -1
   \multiply \grcalcc by \factor
   \grcalcd = \grcalcc
   \advance \grcalcd by \tfactor
   \grcalce = \grcalcd
   \advance \grcalce by \tfactor
   \put(\grcalca, \grcalcc){\line(0,1){\tfactor}}
   \put(\grcalca, \grcalcd){\line(-1,0){\factor}}
   \put(\grcalca, \grcalcd){\line(-3,1){\factor}}
   \put(\grcalcb, \grcalcc){\line(0,1){\factor}}
   \advance \grcolumn by 2}
 \newcommand{\gwmu}[1]{    
   \grcalca = \grcolumn
   \multiply \grcalca by \factor
   \grcalcd = \hfactor
   \multiply \grcalcd by #1
   \advance \grcalca by \grcalcd
   \grcalcb = \grrow
   \multiply \grcalcb by \factor
   \grcalcc = \factor
   \advance \grcalcc by \hfactor
   \grcalcd = #1
   \advance \grcalcd by -1
   \multiply \grcalcd by \factor
   \put(\grcalca,\grcalcb){\oval(\grcalcd,\grcalcc)[b]}
   \advance \grcalcb by -\hfactor
   \advance \grcalcb by -\qfactor
   \put(\grcalca,\grcalcb) {\line(0,-1){\qfactor}} 
   \advance \grcolumn by #1}
 \newcommand{\gwcm}[1]{   
   \grcalca = \grcolumn
   \multiply \grcalca by \factor
   \grcalcd = \hfactor
   \multiply \grcalcd by #1
   \advance \grcalca by \grcalcd
   \grcalcb = \grrow
   \advance \grcalcb by -1
   \multiply \grcalcb by \factor
   \grcalcc = \factor
   \advance \grcalcc by \hfactor
   \grcalcd = #1
   \advance \grcalcd by -1
   \multiply \grcalcd by \factor
   \put(\grcalca,\grcalcb){\oval(\grcalcd,\grcalcc)[t]}
   \advance \grcalcb by \factor
   \put(\grcalca,\grcalcb) {\line(0,-1){\qfactor}} 
   \advance \grcolumn by #1}
 \newcommand{\gwmuc}[1]{    
   \grcalca = \grcolumn
   \multiply \grcalca by \factor
   \advance \grcalca by \hfactor
   \grcalcb = \grrow
   \multiply \grcalcb by \factor
   \grcalcc = #1
   \advance \grcalcc by -1
   \multiply \grcalcc by \factor
   \put(\grcalca,\grcalcb){\line(1,0){\grcalcc}}
   \advance \grcalca by -\hfactor
   \grcalcd = \hfactor
   \multiply \grcalcd by #1
   \advance \grcalca by \grcalcd
   \grcalcc = \factor
   \advance \grcalcc by \hfactor
   \grcalcd = #1
   \advance \grcalcd by -1
   \multiply \grcalcd by \factor
   \put(\grcalca,\grcalcb){\oval(\grcalcd,\grcalcc)[b]}
   \advance \grcalcb by -\hfactor
   \advance \grcalcb by -\qfactor
   \put(\grcalca,\grcalcb) {\line(0,-1){\qfactor}} 
   \advance \grcolumn by #1}
 \newcommand{\gwcmc}[1]{   
   \grcalca = \grcolumn
   \multiply \grcalca by \factor
   \advance \grcalca by \hfactor
   \grcalcb = \grrow
   \multiply \grcalcb by \factor
   \advance \grcalcb by -\factor
   \grcalcc = #1
   \advance \grcalcc by -1
   \multiply \grcalcc by \factor
   \put(\grcalca,\grcalcb){\line(1,0){\grcalcc}}
   \grcalcd = #1
   \advance \grcalcd by -1
   \multiply \grcalcd by \hfactor
   \advance \grcalca by \grcalcd
   \grcalcc = \factor
   \advance \grcalcc by \hfactor
   \grcalcd = #1
   \advance \grcalcd by -1
   \multiply \grcalcd by \factor
   \put(\grcalca,\grcalcb){\oval(\grcalcd,\grcalcc)[t]}
   \advance \grcalcb by \factor
   \put(\grcalca,\grcalcb) {\line(0,-1){\qfactor}} 
   \advance \grcolumn by #1}
 \newcommand{\gev}{  
   \grcalca = \grcolumn
   \advance \grcalca by 1
   \multiply \grcalca by \factor
   \grcalcb = \grrow
   \multiply \grcalcb by \factor
   \grcalcc = \factor
   \advance \grcalcc by \hfactor
   \put(\grcalca,\grcalcb){\oval(\factor,\grcalcc)[b]}
   \advance \grcolumn by 2}
 \newcommand{\gdb}{   
   \grcalca = \grcolumn
   \advance \grcalca by 1
   \multiply \grcalca by \factor
   \grcalcb = \grrow
   \advance \grcalcb by -1
   \multiply \grcalcb by \factor
   \grcalcc = \factor
   \advance \grcalcc by \hfactor
   \put(\grcalca,\grcalcb){\oval(\factor,\grcalcc)[t]}
   \advance \grcolumn by 2}
 \newcommand{\gwev}[1]{    
   \grcalca = \grcolumn
   \multiply \grcalca by \factor
   \grcalcd = \hfactor
   \multiply \grcalcd by #1
   \advance \grcalca by \grcalcd
   \grcalcb = \grrow
   \multiply \grcalcb by \factor
   \grcalcc = \factor
   \advance \grcalcc by \hfactor
   \grcalcd = #1
   \advance \grcalcd by -1
   \multiply \grcalcd by \factor
   \put(\grcalca,\grcalcb){\oval(\grcalcd,\grcalcc)[b]}
   \advance \grcolumn by #1}
 \newcommand{\gwdb}[1]{   
   \grcalca = \grcolumn
   \multiply \grcalca by \factor
   \grcalcd = \hfactor
   \multiply \grcalcd by #1
   \advance \grcalca by \grcalcd
   \grcalcb = \grrow
   \advance \grcalcb by -1
   \multiply \grcalcb by \factor
   \grcalcc = \factor
   \advance \grcalcc by \hfactor
   \grcalcd = #1
   \advance \grcalcd by -1
   \multiply \grcalcd by \factor
   \put(\grcalca,\grcalcb){\oval(\grcalcd,\grcalcc)[t]}
   \advance \grcolumn by #1}
 \newcommand{\gbr}{
   \grcalca = \grcolumn
   \multiply \grcalca by \factor
   \advance \grcalca by \hfactor
   \grcalcb = \grcalca
   \advance \grcalcb by \hfactor
   \grcalcc = \grcalca
   \advance \grcalcc by \factor
   \grcalcd = \grrow
   \multiply \grcalcd by \factor
   \grcalce = \grcalcd
   \advance \grcalce by -\tfactor
   \grcalcf = \grcalcd
   \advance \grcalcf by -\hfactor
   \grcalcg = \grcalce
   \advance \grcalcg by -\tfactor
   \grcalch = \grcalcd
   \advance \grcalch by -\factor
   \qbezier(\grcalca,\grcalcd)(\grcalca,\grcalce)(\grcalcb,\grcalcf) 
   \qbezier(\grcalcb,\grcalcf)(\grcalcc,\grcalcg)(\grcalcc,\grcalch) 
   \advance \grcalcf by -\dfactor
   \advance \grcalcb by -\sfactor
   \qbezier(\grcalca,\grcalch)(\grcalca,\grcalcg)(\grcalcb,\grcalcf) 
   \advance \grcalcf by \sfactor
   \advance \grcalcb by \tfactor
   \qbezier(\grcalcc,\grcalcd)(\grcalcc,\grcalce)(\grcalcb,\grcalcf) 
   \advance \grcolumn by 2}
 \newcommand{\gibr}{
   \grcalca = \grcolumn
   \multiply \grcalca by \factor
   \advance \grcalca by \hfactor
   \grcalcb = \grcalca
   \advance \grcalcb by \hfactor
   \grcalcc = \grcalca
   \advance \grcalcc by \factor
   \grcalcd = \grrow
   \multiply \grcalcd by \factor
   \grcalce = \grcalcd
   \advance \grcalce by -\tfactor
   \grcalcf = \grcalcd
   \advance \grcalcf by -\hfactor
   \grcalcg = \grcalce
   \advance \grcalcg by -\tfactor
   \grcalch = \grcalcd
   \advance \grcalch by -\factor
   \qbezier(\grcalcc,\grcalcd)(\grcalcc,\grcalce)(\grcalcb,\grcalcf) 
   \qbezier(\grcalcb,\grcalcf)(\grcalca,\grcalcg)(\grcalca,\grcalch) 
   \advance \grcalcf by -\dfactor
   \advance \grcalcb by \sfactor
   \qbezier(\grcalcc,\grcalch)(\grcalcc,\grcalcg)(\grcalcb,\grcalcf) 
   \advance \grcalcf by \sfactor
   \advance \grcalcb by -\tfactor
   \qbezier(\grcalca,\grcalcd)(\grcalca,\grcalce)(\grcalcb,\grcalcf) 
   \advance \grcolumn by 2}
\newcommand{\gsy}{
   \grcalca = \grcolumn
   \multiply \grcalca by \factor
   \advance \grcalca by \hfactor
   \grcalcb = \grcalca
   \advance \grcalcb by \hfactor
   \grcalcc = \grcalca
   \advance \grcalcc by \factor
   \grcalcd = \grrow
   \multiply \grcalcd by \factor
   \grcalce = \grcalcd
   \advance \grcalce by -\tfactor
   \grcalcf = \grcalcd
   \advance \grcalcf by -\hfactor
   \grcalcg = \grcalce
   \advance \grcalcg by -\tfactor
   \grcalch = \grcalcd
   \advance \grcalch by -\factor
   \qbezier(\grcalcc,\grcalcd)(\grcalcc,\grcalce)(\grcalcb,\grcalcf) 
   \qbezier(\grcalcb,\grcalcf)(\grcalca,\grcalcg)(\grcalca,\grcalch) 
   \advance \grcalcf by -\dfactor
   \advance \grcalcb by \sfactor
   \qbezier(\grcalcc,\grcalch)(\grcalcc,\grcalcg)(\grcalcb,\grcalcf) 
   \qbezier(\grcalca,\grcalcd)(\grcalca,\grcalce)(\grcalcb,\grcalcf) 
   \advance \grcolumn by 2}
 \newcommand{\gbrc}{
   \grcalca = \grcolumn
   \multiply \grcalca by \factor
   \advance \grcalca by \hfactor
   \grcalcb = \grcalca
   \advance \grcalcb by \hfactor
   \grcalcc = \grcalca
   \advance \grcalcc by \factor
   \grcalcd = \grrow
   \multiply \grcalcd by \factor
   \grcalce = \grcalcd
   \advance \grcalce by -\tfactor
   \grcalcf = \grcalcd
   \advance \grcalcf by -\hfactor
   \grcalcg = \grcalce
   \advance \grcalcg by -\tfactor
   \grcalch = \grcalcd
   \advance \grcalch by -\factor
   \put(\grcalcb,\grcalcf){\circle{\hfactor}}
   \qbezier(\grcalca,\grcalcd)(\grcalca,\grcalce)(\grcalcb,\grcalcf) 
   \qbezier(\grcalcb,\grcalcf)(\grcalcc,\grcalcg)(\grcalcc,\grcalch) 
   \advance \grcalcf by -\dfactor
   \advance \grcalcb by -\sfactor
   \qbezier(\grcalca,\grcalch)(\grcalca,\grcalcg)(\grcalcb,\grcalcf) 
   \advance \grcalcf by \sfactor
   \advance \grcalcb by \tfactor
   \qbezier(\grcalcc,\grcalcd)(\grcalcc,\grcalce)(\grcalcb,\grcalcf) 
   \advance \grcolumn by 2}
 \newcommand{\gibrc}{
   \grcalca = \grcolumn
   \multiply \grcalca by \factor
   \advance \grcalca by \hfactor
   \grcalcb = \grcalca
   \advance \grcalcb by \hfactor
   \grcalcc = \grcalca
   \advance \grcalcc by \factor
   \grcalcd = \grrow
   \multiply \grcalcd by \factor
   \grcalce = \grcalcd
   \advance \grcalce by -\tfactor
   \grcalcf = \grcalcd
   \advance \grcalcf by -\hfactor
   \grcalcg = \grcalce
   \advance \grcalcg by -\tfactor
   \grcalch = \grcalcd
   \advance \grcalch by -\factor
   \put(\grcalcb,\grcalcf){\circle{\hfactor}}
   \qbezier(\grcalcc,\grcalcd)(\grcalcc,\grcalce)(\grcalcb,\grcalcf) 
   \qbezier(\grcalcb,\grcalcf)(\grcalca,\grcalcg)(\grcalca,\grcalch) 
   \advance \grcalcf by -\dfactor
   \advance \grcalcb by \sfactor
   \qbezier(\grcalcc,\grcalch)(\grcalcc,\grcalcg)(\grcalcb,\grcalcf) 
   \advance \grcalcf by \sfactor
   \advance \grcalcb by -\tfactor
   \qbezier(\grcalca,\grcalcd)(\grcalca,\grcalce)(\grcalcb,\grcalcf) 
   \advance \grcolumn by 2}
 \newcommand{\gu}[1]{
   \grcalca = \grcolumn
   \multiply \grcalca by \factor
   \grcalcd = \hfactor
   \multiply \grcalcd by #1
   \advance \grcalca by \grcalcd
   \grcalcb = \grrow
   \advance \grcalcb by -1
   \multiply \grcalcb by \factor
   \put(\grcalca,\grcalcb) {\line(0,1){\hfactor}} 
   \advance \grcalcb by \hfactor
   \put(\grcalca,\grcalcb) {\circle*{3}}
   \advance \grcolumn by #1}
 \newcommand{\gcu}[1]{
   \grcalca = \grcolumn
   \multiply \grcalca by \factor
   \grcalcd = \hfactor
   \multiply \grcalcd by #1
   \advance \grcalca by \grcalcd
   \grcalcb = \grrow
   \multiply \grcalcb by \factor
   \put(\grcalca,\grcalcb) {\line(0,-1){\hfactor}} 
   \advance \grcalcb by -\hfactor
   \put(\grcalca,\grcalcb) {\circle*{3}}
   \advance \grcolumn by #1}
 \newcommand{\gmp}[1]{
   \grcalca = \grcolumn
   \multiply \grcalca by \factor
   \advance \grcalca by \hfactor
   \grcalcb = \grrow
   \multiply \grcalcb by \factor
   \put(\grcalca,\grcalcb) {\line(0,-1){\dfactor}} 
   \advance \grcalcb by -\factor
   \put(\grcalca,\grcalcb) {\line(0,1){\dfactor}} 
   \advance \grcalcb by \hfactor
   \grcalcc = \factor
   \advance \grcalcc by -\qfactor
   \put(\grcalca,\grcalcb) {\circle{\grcalcc}}
   \put(\grcalca,\grcalcb) {\makebox(0,0){$\scriptstyle #1$}}
   \advance \grcolumn by 1}
 \newcommand{\gbmp}[1]{
   \grcalca = \grcolumn
   \multiply \grcalca by \factor
   \advance \grcalca by \hfactor
   \grcalcb = \grrow
   \multiply \grcalcb by \factor
   \put(\grcalca,\grcalcb) {\line(0,-1){\dfactor}} 
   \advance \grcalcb by -\factor
   \put(\grcalca,\grcalcb) {\line(0,1){\dfactor}} 
   \advance \grcalca by -\hfactor
   \advance \grcalca by \dfactor
   \advance \grcalcb by \dfactor
   \grcalcc = \factor
   \advance \grcalcc by -\sfactor
   \put(\grcalca,\grcalcb) {\framebox(\grcalcc,\grcalcc){$\scriptstyle #1$}}
   \advance \grcolumn by 1}
 \newcommand{\gbmpt}[1]{
   \grcalca = \grcolumn
   \multiply \grcalca by \factor
   \advance \grcalca by \hfactor
   \grcalcb = \grrow
   \multiply \grcalcb by \factor
   \put(\grcalca,\grcalcb) {\line(0,-1){\dfactor}} 
   \advance \grcalcb by -\factor
   \advance \grcalca by -\hfactor
   \advance \grcalca by \dfactor
   \advance \grcalcb by \dfactor
   \grcalcc = \factor
   \advance \grcalcc by -\sfactor
   \put(\grcalca,\grcalcb) {\framebox(\grcalcc,\grcalcc){$\scriptstyle #1$}}
   \advance \grcolumn by 1}
 \newcommand{\gbmpb}[1]{
   \grcalca = \grcolumn
   \multiply \grcalca by \factor
   \advance \grcalca by \hfactor
   \grcalcb = \grrow
   \multiply \grcalcb by \factor
   \advance \grcalcb by -\factor
   \put(\grcalca,\grcalcb) {\line(0,1){\dfactor}} 
   \advance \grcalca by -\hfactor
   \advance \grcalca by \dfactor
   \advance \grcalcb by \dfactor
   \grcalcc = \factor
   \advance \grcalcc by -\sfactor
   \put(\grcalca,\grcalcb) {\framebox(\grcalcc,\grcalcc){$\scriptstyle #1$}}
   \advance \grcolumn by 1}
 \newcommand{\gbmpn}[1]{
   \grcalca = \grcolumn
   \multiply \grcalca by \factor
   \advance \grcalca by \hfactor
   \grcalcb = \grrow
   \multiply \grcalcb by \factor
   \advance \grcalcb by -\factor
   \advance \grcalca by -\hfactor
   \advance \grcalca by \dfactor
   \advance \grcalcb by \dfactor
   \grcalcc = \factor
   \advance \grcalcc by -\sfactor
   \put(\grcalca,\grcalcb) {\framebox(\grcalcc,\grcalcc){$\scriptstyle #1$}}
   \advance \grcolumn by 1}
 \newcommand{\glmptb}{    
   \grcalca = \grcolumn
   \multiply \grcalca by \factor
   \advance \grcalca by \hfactor
   \grcalcb = \grrow
   \multiply \grcalcb by \factor
   \put(\grcalca,\grcalcb) {\line(0,-1){\dfactor}} 
   \advance \grcalcb by -\factor
   \put(\grcalca,\grcalcb) {\line(0,1){\dfactor}} 
   \advance \grcalca by -\hfactor
   \advance \grcalca by \dfactor
   \advance \grcalcb by \dfactor
   \put(\grcalca,\grcalcb) {\line(1,0){\factor}} 
   \advance \grcalcb by \factor
   \advance \grcalcb by -\sfactor
   \put(\grcalca,\grcalcb) {\line(1,0){\factor}} 
   \grcalcc = \factor
   \advance \grcalcc by -\sfactor
   \put(\grcalca,\grcalcb) {\line(0,-1){\grcalcc}} 
   \advance \grcolumn by 1}
 \newcommand{\glmpt}{    
   \grcalca = \grcolumn
   \multiply \grcalca by \factor
   \advance \grcalca by \hfactor
   \grcalcb = \grrow
   \multiply \grcalcb by \factor
   \put(\grcalca,\grcalcb) {\line(0,-1){\dfactor}} 
   \advance \grcalca by -\hfactor
   \advance \grcalca by \dfactor
   \advance \grcalcb by -\dfactor
   \put(\grcalca,\grcalcb) {\line(1,0){\factor}} 
   \advance \grcalcb by -\factor
   \advance \grcalcb by \sfactor
   \put(\grcalca,\grcalcb) {\line(1,0){\factor}} 
   \grcalcc = \factor
   \advance \grcalcc by -\sfactor
   \put(\grcalca,\grcalcb) {\line(0,1){\grcalcc}} 
   \advance \grcolumn by 1}
 \newcommand{\glmpb}{    
   \grcalca = \grcolumn
   \multiply \grcalca by \factor
   \advance \grcalca by \hfactor
   \grcalcb = \grrow
   \multiply \grcalcb by \factor
   \advance \grcalcb by -\factor
   \put(\grcalca,\grcalcb) {\line(0,1){\dfactor}} 
   \advance \grcalca by -\hfactor
   \advance \grcalca by \dfactor
   \advance \grcalcb by \dfactor
   \put(\grcalca,\grcalcb) {\line(1,0){\factor}} 
   \advance \grcalcb by \factor
   \advance \grcalcb by -\sfactor
   \put(\grcalca,\grcalcb) {\line(1,0){\factor}} 
   \grcalcc = \factor
   \advance \grcalcc by -\sfactor
   \put(\grcalca,\grcalcb) {\line(0,-1){\grcalcc}} 
   \advance \grcolumn by 1}
 \newcommand{\glmp}{    
   \grcalca = \grcolumn
   \multiply \grcalca by \factor
   \advance \grcalca by \dfactor
   \grcalcb = \grrow
   \multiply \grcalcb by \factor
   \advance \grcalcb by -\dfactor
   \put(\grcalca,\grcalcb) {\line(1,0){\factor}} 
   \advance \grcalcb by -\factor
   \advance \grcalcb by \sfactor
   \put(\grcalca,\grcalcb) {\line(1,0){\factor}} 
   \grcalcc = \factor
   \advance \grcalcc by -\sfactor
   \put(\grcalca,\grcalcb) {\line(0,1){\grcalcc}} 
   \advance \grcolumn by 1}
 \newcommand{\gcmptb}{    
   \grcalca = \grcolumn
   \multiply \grcalca by \factor
   \advance \grcalca by \hfactor
   \grcalcb = \grrow
   \multiply \grcalcb by \factor
   \put(\grcalca,\grcalcb) {\line(0,-1){\dfactor}} 
   \advance \grcalcb by -\factor
   \put(\grcalca,\grcalcb) {\line(0,1){\dfactor}} 
   \advance \grcalca by -\hfactor
   \advance \grcalcb by \dfactor
   \put(\grcalca,\grcalcb) {\line(1,0){\factor}} 
   \advance \grcalcb by \factor
   \advance \grcalcb by -\sfactor
   \put(\grcalca,\grcalcb) {\line(1,0){\factor}} 
   \advance \grcolumn by 1}
\newcommand{\gmpcu}[1]{
   \grcalca = \grcolumn
   \multiply \grcalca by \factor
   \advance \grcalca by \hfactor
   \grcalcb = \grrow
   \multiply \grcalcb by \factor
   \put(\grcalca,\grcalcb) {\line(0,-1){\dfactor}} 
   \advance \grcalcb by -\factor
   \advance \grcalcb by \hfactor
   \grcalcc = \factor
   \advance \grcalcc by -\qfactor
   \put(\grcalca,\grcalcb) {\circle{\grcalcc}}
   \put(\grcalca,\grcalcb) {\makebox(0,0){$\scriptstyle #1$}}
   \advance \grcolumn by 1}
\newcommand{\gmpu}[1]{
   \grcalca = \grcolumn
   \multiply \grcalca by \factor
   \advance \grcalca by \hfactor
   \grcalcb = \grrow
   \multiply \grcalcb by \factor
   \advance \grcalcb by -\factor
   \put(\grcalca,\grcalcb) {\line(0,1){\dfactor}} 
   \advance \grcalcb by \hfactor
   \grcalcc = \factor
   \advance \grcalcc by -\qfactor
   \put(\grcalca,\grcalcb) {\circle{\grcalcc}}
   \put(\grcalca,\grcalcb) {\makebox(0,0){$\scriptstyle #1$}}
   \advance \grcolumn by 1}      
 \newcommand{\gcmpt}{    
   \grcalca = \grcolumn
   \multiply \grcalca by \factor
   \advance \grcalca by \hfactor
   \grcalcb = \grrow
   \multiply \grcalcb by \factor
   \put(\grcalca,\grcalcb) {\line(0,-1){\dfactor}} 
   \advance \grcalcb by -\factor
   \advance \grcalca by -\hfactor
   \advance \grcalcb by \dfactor
   \put(\grcalca,\grcalcb) {\line(1,0){\factor}} 
   \advance \grcalcb by \factor
   \advance \grcalcb by -\sfactor
   \put(\grcalca,\grcalcb) {\line(1,0){\factor}} 
   \advance \grcolumn by 1}
 \newcommand{\gcmpb}{    
   \grcalca = \grcolumn
   \multiply \grcalca by \factor
   \advance \grcalca by \hfactor
   \grcalcb = \grrow
   \multiply \grcalcb by \factor
   \advance \grcalcb by -\factor
   \put(\grcalca,\grcalcb) {\line(0,1){\dfactor}} 
   \advance \grcalca by -\hfactor
   \advance \grcalcb by \dfactor
   \put(\grcalca,\grcalcb) {\line(1,0){\factor}} 
   \advance \grcalcb by \factor
   \advance \grcalcb by -\sfactor
   \put(\grcalca,\grcalcb) {\line(1,0){\factor}} 
   \advance \grcolumn by 1}
 \newcommand{\gcmp}{    
   \grcalca = \grcolumn
   \multiply \grcalca by \factor
   \grcalcb = \grrow
   \multiply \grcalcb by \factor
   \advance \grcalcb by -\factor
   \advance \grcalcb by \dfactor
   \put(\grcalca,\grcalcb) {\line(1,0){\factor}} 
   \advance \grcalcb by \factor
   \advance \grcalcb by -\sfactor
   \put(\grcalca,\grcalcb) {\line(1,0){\factor}} 
   \advance \grcolumn by 1}
 \newcommand{\grmptb}{    
   \grcalca = \grcolumn
   \multiply \grcalca by \factor
   \advance \grcalca by \hfactor
   \grcalcb = \grrow
   \multiply \grcalcb by \factor
   \put(\grcalca,\grcalcb) {\line(0,-1){\dfactor}} 
   \advance \grcalcb by -\factor
   \put(\grcalca,\grcalcb) {\line(0,1){\dfactor}} 
   \advance \grcalca by \hfactor
   \advance \grcalca by -\dfactor
   \advance \grcalcb by \dfactor
   \put(\grcalca,\grcalcb) {\line(-1,0){\factor}} 
   \advance \grcalcb by \factor
   \advance \grcalcb by -\sfactor
   \put(\grcalca,\grcalcb) {\line(-1,0){\factor}} 
   \grcalcc = \factor
   \advance \grcalcc by -\sfactor
   \put(\grcalca,\grcalcb) {\line(0,-1){\grcalcc}} 
   \advance \grcolumn by 1}
 \newcommand{\grmpt}{    
   \grcalca = \grcolumn
   \multiply \grcalca by \factor
   \advance \grcalca by \hfactor
   \grcalcb = \grrow
   \multiply \grcalcb by \factor
   \put(\grcalca,\grcalcb) {\line(0,-1){\dfactor}} 
   \advance \grcalca by \hfactor
   \advance \grcalca by -\dfactor
   \advance \grcalcb by -\dfactor
   \put(\grcalca,\grcalcb) {\line(-1,0){\factor}} 
   \advance \grcalcb by -\factor
   \advance \grcalcb by \sfactor
   \put(\grcalca,\grcalcb) {\line(-1,0){\factor}} 
   \grcalcc = \factor
   \advance \grcalcc by -\sfactor
   \put(\grcalca,\grcalcb) {\line(0,1){\grcalcc}} 
   \advance \grcolumn by 1}
 \newcommand{\grmpb}{    
   \grcalca = \grcolumn
   \multiply \grcalca by \factor
   \advance \grcalca by \hfactor
   \grcalcb = \grrow
   \multiply \grcalcb by \factor
   \advance \grcalcb by -\factor
   \put(\grcalca,\grcalcb) {\line(0,1){\dfactor}} 
   \advance \grcalca by \hfactor
   \advance \grcalca by -\dfactor
   \advance \grcalcb by \dfactor
   \put(\grcalca,\grcalcb) {\line(-1,0){\factor}} 
   \advance \grcalcb by \factor
   \advance \grcalcb by -\sfactor
   \put(\grcalca,\grcalcb) {\line(-1,0){\factor}} 
   \grcalcc = \factor
   \advance \grcalcc by -\sfactor
   \put(\grcalca,\grcalcb) {\line(0,-1){\grcalcc}} 
   \advance \grcolumn by 1}
 \newcommand{\grmp}{    
   \grcalca = \grcolumn
   \multiply \grcalca by \factor
   \advance \grcalca by \factor
   \advance \grcalca by -\dfactor
   \grcalcb = \grrow
   \multiply \grcalcb by \factor
   \advance \grcalcb by -\dfactor
   \put(\grcalca,\grcalcb) {\line(-1,0){\factor}} 
   \advance \grcalcb by -\factor
   \advance \grcalcb by \sfactor
   \put(\grcalca,\grcalcb) {\line(-1,0){\factor}} 
   \grcalcc = \factor
   \advance \grcalcc by -\sfactor
   \put(\grcalca,\grcalcb) {\line(0,1){\grcalcc}} 
   \advance \grcolumn by 1}
 \newcommand{\gwmuh}[3]{    
   \grcalca = \grcolumn
   \multiply \grcalca by \factor
   \grcalcb = #2
   \advance \grcalcb by #3
   \multiply \grcalcb by \qfactor
   \advance \grcalca by \grcalcb
   \grcalcb = \grrow
   \multiply \grcalcb by \factor
   \grcalcc = #3
   \advance \grcalcc by -#2
   \multiply \grcalcc by \hfactor
   \grcalcd = \factor
   \advance \grcalcd by \hfactor
   \put(\grcalca,\grcalcb){\oval(\grcalcc,\grcalcd)[b]}
   \grcalca = \grcolumn
   \multiply \grcalca by \factor
   \grcalcc = #1
   \multiply \grcalcc by \hfactor
   \advance \grcalca by \grcalcc
   \advance \grcalcb by -\hfactor
   \advance \grcalcb by -\qfactor
   \put(\grcalca,\grcalcb) {\line(0,-1){\qfactor}} 
   \advance \grcolumn by #1}
 \newcommand{\gwcmh}[3]{   
   \grcalca = \grcolumn
   \multiply \grcalca by \factor
   \grcalcb = #2
   \advance \grcalcb by #3
   \multiply \grcalcb by \qfactor
   \advance \grcalca by \grcalcb
   \grcalcb = \grrow
   \advance \grcalcb by -1
   \multiply \grcalcb by \factor
   \grcalcc = #3
   \advance \grcalcc by -#2
   \multiply \grcalcc by \hfactor
   \grcalcd = \factor
   \advance \grcalcd by \hfactor
   \put(\grcalca,\grcalcb){\oval(\grcalcc,\grcalcd)[t]}
   \grcalca = \grcolumn
   \multiply \grcalca by \factor
   \grcalcc = #1
   \multiply \grcalcc by \hfactor
   \advance \grcalca by \grcalcc
   \advance \grcalcb by \factor
   \put(\grcalca,\grcalcb) {\line(0,-1){\qfactor}} 
   \advance \grcolumn by #1}
 \newcommand{\gsbox}[1]{
   \grcalca = \grcolumn
   \multiply \grcalca by \factor
   \grcalcb = \grrow
   \multiply \grcalcb by \factor
   \advance \grcalcb by -\factor
   \grcalcc = #1
   \multiply \grcalcc by \factor
   \grcalcd = \factor
   \put(\grcalca,\grcalcb){\framebox(\grcalcc,\grcalcd){}}}
\begin{document}
\title[Ribbon quasi-Hopf algebras]
{On sovereign, balanced and ribbon quasi-Hopf algebras}
\author{Daniel Bulacu}
\address{Faculty of Mathematics and Computer Science, University
of Bucharest, Str. Academiei 14, RO-010014 Bucharest 1, Romania}
\email{daniel.bulacu@fmi.unibuc.ro}
\author{Blas Torrecillas}
\address{Department of Mathematics\\
Universidad de Almer\'{\i}a\\
E-04071 Almer\'{\i}a, Spain}
\email{btorreci@ual.es}
\subjclass[2010]{Primary 16W30; Secondaries 18D10; 16S34}
\thanks{Work supported by the project MTM2017-86987-P "Anillos, modulos y algebra de Hopf".  
The first author thanks the University of Almeria (Spain) for its warm hospitality. 
The authors also thank Bodo Pareigis for sharing his "diagrams" program.}

\begin{abstract}
We introduce the notions of sovereign, spherical and balanced quasi-Hopf algebra. We investigate the connections between 
these, as well as their connections with the class of pivotal, involutory and ribbon quasi-Hopf algebras, respectively. 
Examples of balanced and ribbon quasi-Hopf algebras are obtained from a sort of double construction which associates 
to a braided category (resp. rigid braided) a balanced (resp. ribbon) one.    
\end{abstract}

\keywords{}
\maketitle

\section{Introduction}
The theory of monoidal categories plays an important role in (quantum) topology, a domain with applications   
in knot theory, link theory, the classification of manifolds, algebraic geometry, etc. In some cases, the monoidal categories 
are identified with categories of (co)representations of a Hopf like algebra; for instance, some modular categories identify 
with categories of finite-dimensional comodules of a certain weak Hopf algebra (see \cite[Theorem 1.1]{hp}), while fusion categories 
for which each simple object has an integer Frobenius-Perron dimension are precisely the categories of representations of a finite-dimensional 
quasi-Hopf algebra (see \cite[Theorem 8.33]{eno}). This led to a growing interest for the study of those categories of 
(co)modules which are monoidal, rigid, sovereign, pivotal, spherical, braided, balanced, ribbon or modular, to name a few. 
Many such monoidal categories were invented with topological applications in mind. 
For instance, spherical categories were introduced in \cite{barret1} in order to generalize the Turaev-Viro state sum model invariant of a closed piecewise-linear $3$-manifold, and the main sources of them are categories of representations of involutory 
Hopf algebras and of quantised enveloping algebras at a root of unity; similarly, ribbon categories give rise to link invariants, 
and in particular ribbon Hopf algebras give rise to a topological invariant of knots and links is the $3$-sphere (see \cite{rt}). 
Monoidal categories drawn also attention in physics, algebra and computer science.  

This paper deals with the study of the category of representations of a quasi-Hopf algebra $H$ in the general framework of monoidal 
categories mentioned above. Otherwise stated, we deal with categories $\Cc$ of modules over a $k$-algebra $H$ for which the 
forgetful functor to the category of $k$-vector spaces is quasi-monoidal and, moreover, rigid when it is restricted to the category of 
finite-dimensional $H$-representations, ${}_H{\cal M}^{\rm fd}$ (see \cite{btRecon}). We study first when ${}_H{\cal M}^{\rm fd}$ is sovereign, that is rigid such that the left and right duality functors coincide as monoidal functors; as sovereign categories identify with the pivotal ones (according to \cite{feYett}, see also \thref{pivissovereign}) it comes up that a quasi-Hopf algebra $H$ is sovereign if and only if it is pivotal, and by \cite{bctInv} the latter is equivalent to the existence of a kind of grouplike element in $H$ that 
defines the square of the antipode as an inner automporphism of $H$ (see \prref{sovereinstrquasi}). Then we get for free necessary and sufficient conditions for ${}_H{\cal M}^{\rm fd}$ to be spherical, i.e. a pivotal category for which the left and right traces of an  endomorphism in ${}_H{\cal M}^{\rm fd}$ are equal. We should point out that, as in the Hopf algebra case, particular examples of 
spherical quasi-Hopf algebras are obtained from involutory quasi-Hopf algebras (see \thref{invisspherical}) and their 
quantum doubles. This leads also to a positive answer for a question raised in \cite{bctInv}: $H$ is involutory 
if and only if so is its quantum double $D(H)$ (see \coref{invQDqHa}). 

Starting with \seref{balancedcatqHa} we move to the braided case. It is well understood by now that ribbon categories are balanced, spherical, 
sovereign, etc. categories, so they are good sources for constructing various topological invariants. With these topological 
applications in mind, Kassel and Turaev \cite{kasturaev} associate to any rigid monoidal category $\Cc$ a ribbon one, denoted by $\Dc(\Cc)$. 
Their construction extends the center construction due to Drinfeld (unpublished), Joyal and Street \cite{jscentre} and 
Majid \cite{majcentre}, a construction that associates to a monoidal category a braided one. As observed by 
Drabant \cite{drabant}, the idea behind of the construction of Kassel and Turaev can be used also to obtain balanced categories 
(or braided sovereign categories according to \thref{ribbonvssovereign}, a result owing to Deligne \cite{deligne}) 
from monoidal ones. In particular, any quasitriangular (QT for short) bialgebra (resp. Hopf algebra) gives rise 
to a balanced bialgebra (resp. ribbon Hopf algebra).     

In \prref{balancedfrombraided} we generalize the construction of Drabant, by "replacing" the centre category with an arbitrary braided category $(\Cc, c)$. The same thing we do in \seref{ribboncategqHa}, where in \thref{ribbonfrombraided} we generalize 
the construction of Kassel and Turaev to an arbitrary rigid braided category $(\Cc, c)$. 
Our approaches allow to simplify the computations in the case when we apply these constructions to the category of 
representations of a quasi-bialgebra or a quasi-Hopf algebra $H$ (in general complicated by the apparitions of the reassociator $\Phi$ of $H$ and of the triple that defines the antipode of it). More exactly, to any QT quasi-bialgebra $(H, R)$ we associate a balanced one, denoted by 
$H[\theta, \theta^{-1}]$, in such a way that ${}_{H[\theta, \theta^{-1}]}{\cal M}$ and ${\cal B}({}_H{\cal M}, c)$ are isomorphic as 
balanced categories (see \prref{balancedqHacase}), where in general by ${\cal B}(\Cc, c)$ we denote the balanced category associated 
through our construction to the braided category $(\Cc, c)$. 
Similarly, to a QT quasi-Hopf algebra $(H, R)$ we can associate a ribbon quasi-Hopf algebra $(H(\theta), R)$ is such a way that 
${\cal R}({}_H{\cal M}^{\rm fd}, c)$ identifies as a ribbon category with ${}_{H(\theta)}{\cal M}^{\rm fd}$ 
(see \thref{qtreprasrepresoverribbon}), where in general ${\cal R}(\Cc, c)$ stands for the ribbon category associated through our construction to the rigid braided category $(\Cc, c)$. Furthermore, when we apply this to the category of finite dimensional Yetter-Drinfeld modules 
over an arbitrary finite-dimensional quasi-Hopf algebra $H$ we get for free that ${\cal R}({}_H{\cal YD}^{H{\rm fd}}, {\mf c})$ and 
${}_{D(H)(\theta)}{\cal M}^{\rm fd}$ are isomorphic as ribbon categories, where $D(H)$ is the quantum double of $H$ 
(see \coref{ribbdrinffromadj}). We conclude by mentioning, one more time, that our constructions apply to any (rigid) braided category 
(so for instance to ${}_H{\cal M}$ with $(H, R)$ a QT quasi-bialgebra or quasi-Hopf algebra), hence not necessarily equals a centre 
of a (rigid) monoidal category, leading thus to new link invariants.            
\section{Preliminaries}\selabel{prelim}
\subsection{Quasi-bialgebras and quasi-Hopf algebras}\sselabel{quasiHopfalg}
We work over a field $k$. All algebras, linear
spaces, etc. will be over $k$; unadorned $\ot $ means $\ot_k$.
Following Drinfeld \cite{dri}, a quasi-bialgebra is
a four-tuple $(H, \Delta , \va , \Phi )$ where $H$ is
an associative algebra with unit,
$\Phi$ is an invertible element in $H\ot H\ot H$, and
$\Delta :\ H\ra H\ot H$ and $\va :\ H\ra k$ are algebra
homomorphisms satisfying the identities
\begin{eqnarray}
&&(\Id_H \ot \Delta )(\Delta (h))=
\Phi (\Delta \ot \Id_H)(\Delta (h))\Phi ^{-1},\eqlabel{q1}\\
&&(\Id_H \ot \va )(\Delta (h))=h~~,~~
(\va \ot \Id_H)(\Delta (h))=h,\eqlabel{q2}
\end{eqnarray}
for all $h\in H$, where
$\Phi$ is a $3$-cocycle, in the sense that
\begin{eqnarray}
&&(1\ot \Phi)(\Id_H\ot \Delta \ot \Id_H)
(\Phi)(\Phi \ot 1)\nonumber\\
&&\hspace*{1.5cm}
=(\Id_H\ot \Id_H \ot \Delta )(\Phi )
(\Delta \ot \Id_H \ot \Id_H)(\Phi),\eqlabel{q3}\\
&&(\Id \ot \va \ot \Id_H)(\Phi)=1\ot 1.\eqlabel{q4}
\end{eqnarray}
The map $\Delta$ is called the coproduct or the
comultiplication, $\va $ is the counit, and $\Phi $ is the
reassociator. As for Hopf algebras we denote $\Delta (h)=h_1\ot h_2$,
but since $\Delta$ is only quasi-coassociative we adopt the
further convention (summation understood):
$$
(\Delta \ot \Id_H)(\Delta (h))=h_{(1, 1)}\ot h_{(1, 2)}\ot h_2~~,~~
(\Id_H\ot \Delta )(\Delta (h))=h_1\ot h_{(2, 1)}\ot h_{(2,2)},
$$
for all $h\in H$. We will
denote the tensor components of $\Phi$
by capital letters, and the ones of
$\Phi^{-1}$ by lower case letters, namely
\begin{eqnarray*}
&&\Phi=X^1\ot X^2\ot X^3=T^1\ot T^2\ot T^3=
V^1\ot V^2\ot V^3=\cdots\\
&&\Phi^{-1}=x^1\ot x^2\ot x^3=t^1\ot t^2\ot t^3=
v^1\ot v^2\ot v^3=\cdots
\end{eqnarray*}
$H$ is called a quasi-Hopf
algebra if, moreover, there exists an
anti-morphism $S$ of the algebra
$H$ and elements $\a , \b \in
H$ such that, for all $h\in H$, we
have:
\begin{eqnarray}
&&
S(h_1)\a h_2=\va(h)\a
~~{\rm and}~~
h_1\b S(h_2)=\va (h)\b,\eqlabel{q5}\\ 
&&X^1\b S(X^2)\a X^3=1
~~{\rm and}~~
S(x^1)\a x^2\b S(x^3)=1.\eqlabel{q6}
\end{eqnarray}

Our definition of a quasi-Hopf algebra is different from the
one given by Drinfeld \cite{dri} in the sense that we do not
require the antipode to be bijective. In the case where $H$ is finite dimensional
or quasitriangular, bijectivity of the antipode follows from the other axioms,
see \cite{bc1} and \cite{bn3}, so the two definitions are equivalent.

It is well-known that the antipode of a Hopf algebra is an 
anti-morphism of coalgebras. For a quasi-Hopf algebra, we have something close that follows from the 
following general result due to Drinfeld, see \cite[Lemma 2]{dri}.

\begin{lemma}\lelabel{prelantcoalgqHa}
Let $H$ be a quasi-Hopf algebra and $A$ a $k$-algebra. Suppose that there exist 
an algebra map $f: H\ra A$, an anti-algebra map $g: H\ra A$, and elements 
$\rho, \sigma\in A$ such that 
\begin{eqnarray}
&&g(h_1)\rho f(h_2)=\va(h)\rho,~~f(h_1)\sigma g(h_2)=\va(h)\sigma,~~
\forall~~h\in H,~\eqlabel{qHaf5}\\
&&f(X^1)\sigma g(X^2)\rho f(X^3)=1_A,~~g(x^1)\rho f(x^2)\sigma g(x^3)=1_A.\eqlabel{qHaf6}
\end{eqnarray}
If $\ov{g}: H\ra A$ is another anti-algebra map 
and $\ov{\rho}$, $\ov{\sigma}\in A$ are such that \equref{qHaf5} and \equref{qHaf6} 
hold for $f$, $\ov{g}$, $\ov{\sigma}$ and $\ov{\rho}$ as well, then there exists a unique 
invertible element $F\in A$ such that $\ov{\rho}=F\rho$, $\ov{\sigma}=\sigma F^{-1}$ 
and $\ov{g}(h)=Fg(h)F^{-1}$, for all $h\in H$. Furthermore, 
\[
F=\ov{g}(x^1)\ov{\rho}f(x^2)\sigma g(x^3)~~\mbox{with}~~
F^{-1}=g(x^1)\rho f(x^2)\ov{\sigma}~\ov{g}(x^3).
\]
\end{lemma}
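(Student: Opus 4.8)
The plan is to apply the first half of the lemma (which is Drinfeld's original result) twice, once to each of the two pairs $(g,\rho,\sigma)$ and $(\ov g, \ov\rho, \ov\sigma)$, to obtain two invertible elements $F, G\in A$ with $\ov\rho = F\rho$, $\ov\sigma = \sigma F^{-1}$, $\ov g(h)=Fg(h)F^{-1}$, and symmetrically $\rho = G\ov\rho$, $\sigma = \ov\sigma G^{-1}$, $g(h)=G\ov g(h)G^{-1}$. Composing these relations forces $GF\rho = \rho$ and $GF$ to centralize $g(H)$; together with the normalization coming from \equref{qHaf6} this will pin down $GF = 1_A$, whence $F$ is invertible with $F^{-1}=G$. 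Uniqueness of $F$ is then immediate: if $F'$ also satisfies $\ov\rho = F'\rho$ and $\ov g(h)=F'g(h)F'^{-1}$ for all $h$, then $F^{-1}F'$ commutes with all $g(h)$ and satisfies $F^{-1}F'\rho = \rho$; applying $g$ to the defining cocycle-type relation \equref{qHaf6} and inserting $F^{-1}F'$ shows $F^{-1}F'=1_A$. So the bulk of the work is simply to \emph{construct} one such $F$ and verify the three identities directly, rather than to invoke the lemma abstractly; this also yields the explicit formula.

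Concretely, I would first write down the candidate
\[
F=\ov{g}(x^1)\ov{\rho}\,f(x^2)\sigma g(x^3),\qquad
\wt F = g(x^1)\rho\, f(x^2)\ov\sigma\,\ov g(x^3),
\]
and prove $F\wt F = \wt F F = 1_A$ by a graphical/index computation using $\Phi\Phi^{-1}=1$, the two relations in \equref{qHaf5}, the two relations in \equref{qHaf6}, and quasi-coassociativity \equref{q1} together with the $3$-cocycle identity \equref{q3}; this is the standard manipulation by which Drinfeld shows the analogous element in the antipode-twisting lemma is invertible, and it goes through verbatim with $f$ playing the role of the identity, $g$ (resp. $\ov g$) the antipode, and $\rho,\sigma$ (resp. $\ov\rho,\ov\sigma$) the elements $\alpha,\beta$. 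Once $F$ is known to be invertible, the identity $\ov g(h)=Fg(h)F^{-1}$ is obtained by the following device: apply $f$ to $h_1$, multiply on appropriate sides by $\ov g$ of the other leg, and use \equref{qHaf5} for both $(g,\rho,\sigma)$ and $(\ov g,\ov\rho,\ov\sigma)$ to collapse the counit; the relations $\ov\rho=F\rho$ and $\ov\sigma=\sigma F^{-1}$ come out of the same computation by taking $h=1$ or by reading off the appropriate specialization.

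The main obstacle is the middle step: verifying $F\wt F=1_A$ is a genuine several-line cocycle computation in which one must correctly bracket the products $\Phi^{-1}\otimes\Phi^{-1}$ appearing when $x^1\otimes x^2\otimes x^3$ and its primed/unprimed copies are multiplied, and then repeatedly apply \equref{qHaf5}--\equref{qHaf6} to contract. I expect this to be handled most cleanly in Pareigis's diagram calculus (as the rest of the paper does), where the $3$-cocycle condition \equref{q3} becomes a pentagon move and the relations \equref{qHaf5}--\equref{qHaf6} become the obvious ``cap/cup'' simplifications; the only subtlety is keeping track of which strand carries $f$, $g$, or $\ov g$. After that, the homomorphism-type identity $\ov g(h)=Fg(h)F^{-1}$ and the two one-variable identities follow formally, and uniqueness is the short centralizer argument above.
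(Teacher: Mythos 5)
The paper itself does not prove this lemma: it is quoted from Drinfeld's \cite[Lemma 2]{dri}, so there is no internal proof to compare against. Your plan is the standard (and essentially the only) route to that result, and parts of it are genuinely complete: the candidate $F=\ov{g}(x^1)\ov{\rho}f(x^2)\sigma g(x^3)$ and its claimed inverse $\widetilde{F}=g(x^1)\rho f(x^2)\ov{\sigma}\,\ov{g}(x^3)$ are the right elements; the list of ingredients (\equref{qHaf5}, \equref{qHaf6}, quasi-coassociativity \equref{q1}, the pentagon \equref{q3}) is the right one; and the uniqueness argument --- $F^{-1}F'$ centralizes $g(H)$ and fixes $\rho$, hence equals $1_A$ after inserting it into $g(x^1)\rho f(x^2)\sigma g(x^3)=1_A$ --- is correct and complete as written.

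Two things keep this from being a proof. First, the opening paragraph is circular: there is no ``first half of the lemma'' available to invoke twice, since the existence of an invertible $F$ for a single pair of triples \emph{is} the lemma; you in effect concede this when you pivot to constructing $F$ directly. Second, and more seriously, the entire mathematical content of the existence statement is exactly the computation you defer: the verification that $F\widetilde{F}=\widetilde{F}F=1_A$ and that $F g(h)=\ov{g}(h)F$. (The latter, incidentally, needs no invertibility and is normally proved first --- evaluate $\ov{g}(x^1h_1)\ov{\rho}f(x^2h_{(2,1)})\sigma g(x^3h_{(2,2)})$ in two ways, pushing $\Phi^{-1}$ past $\Delta(h)$ via \equref{q1} and contracting with the two instances of \equref{qHaf5}; it is then also an input to the invertibility computation, since it shows $\widetilde{F}F$ commutes with $g(H)$.) Asserting that the pentagon manipulation ``goes through verbatim'' as in Drinfeld's antipode-twisting argument is an appeal to the literature rather than an argument; as a blind proof the existence half is therefore unestablished. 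The plan is right, but for this lemma the proof \emph{is} the computation, and the computation is absent.
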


If we define $\gamma, \delta\in H\ot H$ by
\begin{eqnarray}
&&\gamma=S(x^1X^2)\a x^2X^3_1\ot S(X^1)\a x^3X^3_2
\equal{(\ref{eq:q3},\ref{eq:q5})}
 S(X^2x^1_2)\a X^3x^2\ot S(X^1x^1_1)\a x^3,
\eqlabel{gamma}\\
&&\delta=X^1_1x^1\b S(X^3)\ot X^1_2x^2\b S(X^2x^3)
\equal{(\ref{eq:q3},\ref{eq:q5})} x^1\b S(x^3_2X^3)\ot x^2X^1\b S(x^3_1X^2)\eqlabel{delta}
\end{eqnarray}
and apply \leref{prelantcoalgqHa} to $A=H\ot H$, $f=\Delta$, $g=\Delta\circ S: H\ra H\ot H$, 
$\rho=\Delta(\a)$, $\sigma=\Delta(\b)$, 
$\ov{g}=(S\ot S)\circ \Delta^{\rm cop}: H\ra H\ot H$, $\ov{\rho}=\gamma$ and 
$\ov{\sigma}=\delta$, then there exists a unique invertible element 
$f=f^1\ot f^2\in H\ot H$, called the Drinfeld twist or the gauge transformation, 
\begin{eqnarray}
f&=&(S\ot S)(\Delta ^{\rm op}(x^1)) \gamma \Delta (x^2\b
S(x^3))\eqlabel{f}~~\mbox{with}\\ 
f^{-1}&=&\Delta (S(x^1)\a x^2) \delta
(S\ot S)(\Delta^{\rm cop}(x^3)),\eqlabel{g}
\end{eqnarray}
such that $\va(f^1)f^2=\va(f^2)f^1=1$ and 
\begin{eqnarray} 
&&f\Delta (S(h))f^{-1}= (S\ot S)(\Delta ^{\rm cop}(h)),~\forall~h\in H,\eqlabel{ca}\\
&&f\Delta (\a )=\gamma~~,~~\Delta (\b )f^{-1}=\delta.\eqlabel{gdf}
\end{eqnarray}

Furthermore, \equref{ca} is part of the fact that $S: H^{\rm op, cop}\ra H_f$ 
is a quasi-Hopf algebra morphism,  where $H_f$ is the twisting of the quasi-Hopf algebra
$H$ by the Drinfeld twist $f$. We refer to \cite{dri} for more details; as far as we are concerned, 
record only that this property of $S$ implies 
\begin{eqnarray}
&&X^1g^1_1G^1\ot X^2g^1_2G^2\ot X^3g^2=g^1S(X^3)\ot g^2_1G^1S(X^2)\ot g^2_2G^2S(X^1),\eqlabel{phig}\\
&&f^1\b S(f^2)=S(\a)~~\mbox{and}~~S(g^1)\a g^2=S(\b),\label{l3}
\end{eqnarray} 
where $f=f^1\ot f^2$ and $f^{-1}:=g^1\ot g^2=G^1\ot G^2$. 

\subsection{Sovereign and spherical categories}\sselabel{moncateg}
For the definition of a monoidal category $\Cc$ and related topics we refer to \cite{kas, maj}. 
Usually, for a monoidal category $\Cc$, we denote by $\ot$ the tensor product, by $\un{1}$ the unit object, and 
by $a, l, r$ the associativity constraint and the left and right unit constraints, respectively. The monoidal category 
$(\Cc, \ot , \un{1}, a, l, r)$ is called strict if all the natural isomorphisms $a, l$ and $r$ are defined by 
identity morphisms in $\Cc$.

If $(\Cc, \ot , \un{1}, a, l, r)$ is a monoidal category, by  
\[
\ov{\Cc}:=(\Cc, \ov{\ot}:=\ot\circ \tau,~ \ov{a}=(\ov{a}_{X, Y, Z}:=a^{-1}_{Z, Y, X})_{X, Y, Z\in \Cc}, 
\un{1}, \ov{l}:=r, \ov{r}:=l)
\]
we denote the reverse monoidal category associated to $\Cc$, where $\tau : \Cc\times \Cc\ra \Cc\times \Cc$ stands for the twist functor, 
that is $\tau(X, Y)=(Y, X)$, for any $X, Y\in \Cc$, and $\tau(f, g)=(g, f)$, for 
any morphisms $X\stackrel{f}{\rightarrow}X'$ and $Y\stackrel{g}{\rightarrow}Y'$ in $\Cc$.

When $\Cc$ is, moreover, 
left rigid we denote by $X^*$ the left dual of an object $X$ of $\Cc$ and by 
$
{\rm coev}_X=
{{\footnotesize
\gbeg{2}{3}
\got{2}{\un{1}}\gnl
\gdb\gnl
\gob{1}{X}\gob{1}{X^*}
\gend}}
: \un{1}\ra X\ot X^*
$ and 
$ 
{\rm ev}_X=
{{\footnotesize
\gbeg{2}{3}
\got{1}{X^*}\got{1}{X}\gnl
\gev\gnl
\gob{2}{\un{1}}
\gend}}: X^*\ot X\ra \un{1} 
$ 
the corresponding coevaluation and evaluation morphisms. When $\Cc$ is strict we have that 
\begin{equation}\eqlabel{rrigid}
{{\footnotesize
\gbeg{3}{4}
\gvac{2}\got{1}{X}\gnl
\gdb\gcl{1}\gnl
\gcl{1}\gev\gnl
\gob{1}{X}
\gend }}=
{{\footnotesize
\gbeg{1}{3}
\got{1}{X}\gnl
\gcl{1}\gnl
\gob{1}{X}
\gend}}
\hspace{3mm}{\rm and}\hspace{3mm}
{{\footnotesize
\gbeg{3}{4}
\got{1}{X^*}\gnl
\gcl{1}\gdb\gnl
\gev\gcl{1}\gnl
\gvac{2}\gob{1}{X^*}
\gend }}=
{{\footnotesize
\gbeg{1}{3}
\got{1}{X^*}\gnl
\gcl{1}\gnl
\gob{1}{X^*}
\gend}}\hspace{2mm}.
\end{equation}

If $\Cc$ is left rigid 
we have a well defined functor $(-)^*: \Cc\ra \Cc^{\rm op}$ that maps $X$ to $X^*$ and a morphism $f$ to $f^*$, the transpose of $f$. 
Here $\Cc^{\rm op}$ is the opposite category associated to $\Cc$. Furthermore, $(-)^*$ is a strong monoidal functor 
if it is regarded as a functor from $\Cc$ to $\ov{\Cc^{\rm op}}:=(\Cc^{\rm op}, \ot^{\rm op}:=
\ot\circ \tau, \un{1}, (a_{Z, Y, X})_{X, Y, Z\in \Cc}, r^{-1}, l^{-1})$, where $\tau: \Cc\times \Cc\ra \Cc\times \Cc$ is the 
switch functor. The functor $(-)^*$ is called the left dual functor, and its strong monoidal structure is mostly determined by 
\begin{equation}\eqlabel{firstisomdual}
\l_{X, Y}=
{{\footnotesize
\gbeg{6}{6}
\got{1}{(X\ot Y)^*}\gnl
\gcl{1}\gvac{1}\gwdb{4}\gnl
\gcl{1}\gvac{1}\gcl{1}\gdb\gcl{1}\gnl
\gcl{1}\gsbox{3}\gvac{1}\gnot{X\ot Y}\gvac{2}\gcl{1}\gcl{1}\gnl
\gwev{3}\gvac{1}\gcl{1}\gcl{1}\gnl
\gvac{4}\gob{1}{Y^*}\gob{1}{X^*}
\gend }}
\hspace{1mm},\hspace{1mm}
\l_{X, Y}^{-1}=
{{\footnotesize
\gbeg{7}{7}
\got{1}{Y^*}\got{1}{X^*}\gnl
\gcl{1}\gcl{1}\gvac{1}\gwdb{3}\gnl
\gcl{1}\gcl{1}\gsbox{3}\gvac{1}\gnot{X\ot Y}\gvac{2}\gcl{1}\gnl
\gcl{1}\gcl{1}\gcl{1}\gcl{1}\gvac{1}\gcl{1}\gnl
\gcl{1}\gev\gcl{1}\gvac{1}\gcl{1}\gnl
\gwev{4}\gvac{1}\gcl{1}\gnl
\gvac{5}\gob{1}{(X\ot Y)^*}
\gend}},
\end{equation}   
in the sense that $\l:=(\l_{X, Y}: (X\ot Y)^*\ra Y^*\ot X^*)_{X, Y\in \Cc}$ is a natural isomorphism from $(-)^*\circ \ot\ra ((-)^*\ot (-)^*)\circ \ot^{\rm op}$  
such that its inverse $\l^{-1}:=(\l^{-1}_{X, Y})_{X, Y\in \Cc}$ satisfies the conditions in \cite[Definition XI.4.1]{kas}. Here, and in what follows, 
\[
{\rm coev}_{X\ot Y}=
{{\footnotesize
\gbeg{5}{5}
\gvac{2}\got{2}{\un{1}}\gnl
\gwdb{4}\gnl
\gsbox{2}\gnot{\hspace*{5mm}X\ot Y}\gvac{3}\gcl{1}\gnl
\gcl{1}\gcl{1}\gvac{1}\gcl{1}\gnl
\gob{1}{X}\gob{1}{Y}\gvac{1}\gob{1}{(X\ot Y)^*}
\gend
}}\hspace*{3mm}\mbox{and}\hspace*{3mm}
{\rm ev}_{X\ot Y}=
{{\footnotesize
\gbeg{5}{5}
\gvac{1}\got{1}{(X\ot Y)^*}\gvac{1}\got{1}{X}\got{1}{Y}\gnl
\gvac{1}\gcl{1}\gvac{1}\gcl{1}\gcl{1}\gnl
\gvac{1}\gcl{1}\gvac{1}\gsbox{2}\gnot{\hspace*{5mm}X\ot Y}\gnl
\gvac{1}\gwev{3}\gnl
\gvac{2}\gob{2}{\un{1}}
\gend }}
\]
is the diagrammatic notation for the coevaluation and evaluation morphisms of $X\ot Y$. 
 
A monoidal category $\Cc$ is right rigid if $\ov{\Cc}$ is left rigid. If this is the case, the right dual of an object $X$ of $\Cc$ is denoted by ${}^*X$, 
and by 
${\rm coev}'_X:=
{{\footnotesize
\gbeg{2}{3}
\got{2}{\un{1}}\gnl
\gdbd\gnl
\gob{1}{{}^*X}\gob{1}{X}
\gend }}: \un{1}\ra {}^*X\ot X$ 
and 
$
{\rm ev}'_X:=
{{\footnotesize
\gbeg{2}{3}
\got{1}{X}\got{1}{{}^*X}\gnl
\gevd\gnl
\gob{2}{\un{1}}
\gend}}: X\ot {}^*X\ra \un{1}
$ 
we denote the corresponding coevaluation and evaluation morphisms. Then, if $\Cc$ is strict,  
\begin{equation}\eqlabel{evcoev}
{{\footnotesize
\gbeg{3}{4}
\got{1}{X}\gnl
\gcl{1}\gdbd\gnl
\gev\gvac{-1}\gnot{\hspace*{-4mm}\vspace{-2mm}\bullet}\gvac{1}\gcl{1}\gnl
\gob{5}{X}
\gend }}=
{{\footnotesize
\gbeg{1}{3}
\got{1}{X}\gnl
\gcl{1}\gnl
\gob{1}{X}
\gend }}
\hspace{3mm}\mbox{~~and~~}\hspace{3mm}
{{\footnotesize
\gbeg{3}{4}
\got{5}{{}^*X}\gnl
\gdbd\gvac{2}\gcl{1}\gnl
\gcl{1}\gevd\gnl
\gob{1}{{}^*X}
\gend }}=
{{\footnotesize
\gbeg{1}{3}
\got{1}{{}^*X}\gnl
\gcl{1}\gnl
\gob{1}{{}^*X}
\gend }}\hspace{2mm}.
\end{equation} 

As in the left rigid case, 
we have a strong monoidal functor ${}^*(-):\Cc\ra \ov{\Cc^{\rm op}}$, called the right dual functor. Note that ${}^*(-)$ is just 
$(-)^*$ considered for $\ov{\Cc}$ instead of $\Cc$. 

\begin{definition}\delabel{monsovcateg}
A monoidal category is called sovereign if it is left and right rigid
such that the corresponding left and right dual functors $(-)^*, {}^*(-):\Cc\ra\ov{\Cc^{opp}}$ are
equal as strong monoidal functors.
\end{definition}

If $\Cc$ is left rigid, the double dual functor $(-)^{**}:=((-)^*)^*:\ {\cal C}\to {\cal C}$ is strong monoidal, too. 

\begin{definition}
Let $\Cc$ be a a left rigid monoidal category and denote by $\Id_\Cc$ the identity functor of $\Cc$. A 
pivotal structure on ${\cal C}$ is a monoidal natural isomorphism $i$ between 
the strong monoidal functors ${\rm Id}_\Cc$ and $(-)^{**}$. Such a pair $(\Cc, i)$ is called pivotal category.
\end{definition}

It seems that the equivalence between sovereign and pivotal notions goes back to Freyd and Yetter \cite{feYett}. 
For the sake of completeness and also for further use we sketch below this equivalence. 

\begin{theorem}\thlabel{pivissovereign}
Let $\Cc$ be a left rigid monoidal category. Then $\Cc$ admits a pivotal structure if and only if it is sovereign.
\end{theorem}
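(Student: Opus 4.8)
The plan is to establish the two implications of the equivalence by explicitly converting a pivotal structure into a sovereign structure and conversely. Recall that a sovereign structure means that the left and right dual functors $(-)^*$ and ${}^*(-)$ coincide as strong monoidal functors from $\Cc$ to $\ov{\Cc^{\rm op}}$; in particular they agree on objects, so $X^* = {}^*X$ for all $X$, and they agree on the coevaluation/evaluation data up to the monoidal structure isomorphisms $\l$. A pivotal structure is a monoidal natural isomorphism $i:\Id_\Cc\Rightarrow (-)^{**}$. The bridge between the two is the following observation: once $\Cc$ is left rigid, the double dual $(-)^{**}$ is automatically the ``right dual of the left dual'' and, more importantly, $X^*$ can always be equipped with a right dual structure for $X$ (since in a left rigid category the left dual of $X$ already has $X$ as a right dual, using $\mathrm{ev}_X, \mathrm{coev}_X$ read the other way). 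So the real content is matching the \emph{canonical} right rigid structure against the one transported via $i$.

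First I would prove the implication ``pivotal $\Rightarrow$ sovereign''. Given $(\Cc,i)$, I define a right rigid structure on $\Cc$ by setting ${}^*X := X^*$ and defining $\mathrm{coev}'_X: \un 1\to X^*\ot X$ and $\mathrm{ev}'_X: X\ot X^*\to \un 1$ by bending the left-duality morphisms of $X^*$ back through the isomorphism $i_X: X\to X^{**}$. Concretely, $\mathrm{ev}'_X$ is the composite $X\ot X^* \xrightarrow{i_X\ot\Id} X^{**}\ot X^*\xrightarrow{\mathrm{ev}_{X^*}}\un 1$, and similarly $\mathrm{coev}'_X = (\Id\ot i_X^{-1})\circ\mathrm{coev}_{X^*}: \un 1\to X^*\ot X^{**}\to X^*\ot X$. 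One then checks the two zig-zag identities in \equref{evcoev}; this reduces to the zig-zag identities for $X^*$ (which hold because $\Cc$ is left rigid) together with naturality of $i$ and the fact that $i$ is a monoidal natural transformation. The latter is exactly what is needed to show that the resulting right dual functor ${}^*(-)$, which on objects is the identity-composed-with-$(-)^*$, coincides with $(-)^*$ as a strong monoidal functor: the monoidal structure constraint of ${}^*(-)$, which is $\l$ for $\ov{\Cc}$, must be shown equal to $\l$ for $\Cc$, and here the coherence (monoidality) condition on $i$ translated through \equref{firstisomdual} does the job. So ${}^*(-) = (-)^*$ as strong monoidal functors, i.e. $\Cc$ is sovereign.

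Conversely, for ``sovereign $\Rightarrow$ pivotal'', I start from the equality ${}^*(-)=(-)^*$ of strong monoidal functors. Applying $(-)^*$ to this equality of functors $\Cc\to\ov{\Cc^{\rm op}}$ and composing appropriately, I obtain a canonical isomorphism relating $(-)^{**}$ to the identity: explicitly, in any left rigid category one has a canonical natural iso ${}^*(X^*)\cong X$ (the right dual of the left dual is the original object), and the sovereign identity ${}^*X = X^*$ lets me rewrite ${}^*(X^*) = (X^*)^* = X^{**}$, yielding $i_X: X\xrightarrow{\sim} X^{**}$. One defines $i_X$ precisely as this composite and must verify that it is (a) natural in $X$ — which follows from naturality of the canonical iso ${}^*(X^*)\cong X$, a standard consequence of the universal property of duals — and (b) monoidal, i.e. compatible with the constraints $i_{\un 1}$ and $i_{X\ot Y}$ versus $i_X\ot i_Y$ through $a$; this is where the equality of the \emph{monoidal structures} (not just underlying functors) of $(-)^*$ and ${}^*(-)$ is used, via the compatibility of $\l$ with itself.

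The main obstacle, and the step I expect to require the most care, is the monoidality check in both directions: verifying that $i$ being a monoidal natural transformation is \emph{equivalent} to the two dual functors agreeing not merely on objects and morphisms but as strong monoidal functors (i.e. their structure isomorphisms $\l$ coincide). This is a diagram chase through the definitions of $\l$ in \equref{firstisomdual}, the zig-zag identities \equref{rrigid} and \equref{evcoev}, and the hexagon-type coherence axiom for a monoidal natural transformation; it is routine but genuinely intricate because of the nested duals $(X\ot Y)^{**}$ versus $Y^{**}\ot X^{**}$. Everything else — the zig-zag identities for the constructed $\mathrm{ev}',\mathrm{coev}'$, and the naturality of the constructed $i$ — follows formally from left rigidity plus naturality of the given data, and I would present those as short graphical calculations using the diagrammatic conventions already set up in the excerpt.
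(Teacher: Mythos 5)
Your proposal is correct and follows essentially the same route as the paper: the pivotal-to-sovereign direction uses exactly the paper's right-duality data $\mathrm{ev}'_V=\mathrm{ev}_{V^*}\circ(i_V\ot\Id_{V^*})$ and $\mathrm{coev}'_V=(\Id_{V^*}\ot i_V^{-1})\circ\mathrm{coev}_{V^*}$, and your sovereign-to-pivotal direction is the mirror-image phrasing (comparison of right duals of $X^*$) of the paper's map $\theta_X:X\to({}^*X)^*$ (comparison of duals of ${}^*X$), identified with $X^{**}$ via $(-)^*={}^*(-)$. You are also right that the monoidality/naturality verifications are the only delicate point; the paper likewise leaves them as routine checks.
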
 
\begin{proof}
Let $\Cc$ be a sovereign category (in particular, it is also right rigid). For all $X\in \Cc$ 
we have an isomorphism $\theta_X: X\ra ({}^*X)^*$,
\begin{eqnarray}
&&\hspace*{-5mm}
\theta _X:\ X
\stackrel{r_X^{-1}}{\longrightarrow}X\ot \un{1}
\stackrel{\Id_X\ot {\rm coev}_{{}^*X}}{\longrightarrow}X\ot ({}^*X\ot ({}^*X)^*)\nonumber\\
&&\hspace*{1cm}
\stackrel{a^{-1}_{X, {}^*X, ({}^*X)^*}}{\longrightarrow}(X\ot {}^*{X})\ot ({}^*X)^*
\stackrel{{\rm ev}'_X\ot {\Id_{({}^*X)^*}}}{\longrightarrow}\un{1}\ot ({}^*X)^*
\stackrel{l_{({}^*X)^*}}{\longrightarrow}({}^*X)^*.\eqlabel{theta}
\end{eqnarray} 
Since $\Cc$ is sovereign we have $(-)^*={}^*(-)$, so $({}^*X)^*=X^{**}$. Thus we have $\theta_X: X\ra X^{**}$. 
One can easily see that 
$\theta=(\theta_X)_{X\in {\rm Ob}(\Cc)}: \Id_\Cc\ra (-)^{**}$ is a natural isomorphism. It is, moreover, 
a natural monoidal transformation, and so provides a pivotal structure on $\Cc$.

Conversely, let $i$ be a pivotal structure on $\Cc$, and for all $V\in \Cc$ define 
\begin{eqnarray*}
&&{\rm ev}'_V: V\ot V^*\stackrel{i_V\ot \Id_{V^*}}{\longrightarrow}V^{**}\ot V^*\stackrel{{\rm ev}_{V^*}}{\longrightarrow}\un{1},~~
\mbox{and}\\
&&{\rm coev}'_V: \un{1}\stackrel{{\rm coev}_{V^*}}{\longrightarrow}V^*\ot V^{**}\stackrel{\Id_{V^*}\ot i^{-1}_V}{\longrightarrow}
V^*\ot V.
\end{eqnarray*}
It is immediate that $(V^*, {\rm ev}'_V, {\rm coev}'_V)$ is a right dual for $V$ in $\Cc$, and so $\Cc$ is right rigid, too. 
With respect to this right duality we have that the left and right duality functors 
coincide as monoidal functors, and therefore $\Cc$ is sovereign. 
\end{proof}

For a sovereign (or, equivalently, for a pivotal) category $\Cc$ one can define the left and right traces 
${\rm tr}_l(f)$, ${\rm tr}_r(f)\in {\rm End}_\Cc(\un{1})$ of an endomorphism $f\in {\rm End}_\Cc(V)$ by 
\begin{equation}\eqlabel{defoflrtraces}
{\rm tr}_l(f):=
\gbeg{2}{5}
\got{2}{\un{1}}\gnl
\gdbd\gnl
\gcl{1}\gmp{f}\gnl
\gev\gnl
\gob{2}{\un{1}}
\gend
~~\mbox{and}~~
{\rm tr}_r(f):=
\gbeg{2}{5}
\got{2}{\un{1}}\gnl
\gdb\gnl
\gmp{f}\gcl{1}\gnl
\gevd\gnl
\gob{2}{\un{1}}
\gend.
\end{equation}

The definition of a spherical category in terms of a pivotal structure was given in \cite[Definition 2.4]{barret1}. 
If we deal with the sovereign property instead, it reduces to the following.   

\begin{definition}
A sovereign category is called spherical if ${\rm tr}_l(f)={\rm tr}_r(f)$, for all $V\in \Cc$ and $f\in {\rm End}_\Cc(V)$. 
\end{definition}

Note that ${\rm tr}_{l/r}(f)={\rm tr}_{r/l}(f^*)$, for all $f\in {\rm End}_\Cc(V)$, and therefore a sovereign ($\equiv$ pivotal) 
category is spherical if and only if ${\rm tr}_l(f)={\rm tr}_l(f^*)$ or ${\rm tr}_r(f)={\rm tr}_r(f^*)$, for all 
$V\in \Cc$ and $f\in {\rm End}_\Cc(V)$. This is possible since in any sovereign category we have, for all $V\in \Cc$, 
\[
\gbeg{3}{4}
\got{1}{V^{**}}\gnl
\gcl{1}\gdbd\gnl
\gev\gcl{1}\gnl
\gvac{2}\gob{1}{V}
\gend
=
\gbeg{3}{4}
\gvac{2}\got{1}{V^{**}}\gnl
\gdb\gcl{1}\gnl
\gcl{1}\gevd\gnl
\gob{1}{V}
\gend
\hspace*{0.7cm}\mbox{and}\hspace*{0.7cm}
\gbeg{3}{4}
\gvac{2}\got{1}{V}\gnl
\gdbd\gvac{2}\gcl{1}\gnl
\gcl{1}\gev\gnl
\gob{1}{V^{**}}
\gend
=
\gbeg{3}{4}
\got{1}{V}\gnl
\gcl{1}\gdb\gnl
\gevd\gvac{2}\gcl{1}\gnl
\gvac{2}\gob{1}{V^{**}}
\gend~~.
\] 
\subsection{Balanced and ribbon categories}
Roughly speaking, a monoidal category $\Cc$ is braided if it has a braiding $c$, i.e. a family of natural isomorphisms 
$c_{X, Y}: X\ot Y\ra Y\ot X$ satisfying two hexagon axioms, see \cite[XIII.1]{kas}. Any braiding $c$ obeys the categorical version of 
the Yang-Baxter equation. Namely, for any objects $X, Y$ and $Z$ of ${\cal C}$, we have   
\begin{equation}\eqlabel{qybe}
{{\footnotesize
\gbeg{3}{5}
\got{1}{X}\got{1}{Y}\got{1}{Z}\gnl
\gbr\gcl{1}\gnl
\gcl{1}\gbr\gnl
\gbr\gcl{1}\gnl
\gob{1}{Z}\gob{1}{Y}\gob{1}{X}
\gend }}=
{{\footnotesize
\gbeg{3}{5}
\got{1}{X}\got{1}{Y}\got{1}{Z}\gnl
\gcl{1}\gbr\gnl
\gbr\gcl{1}\gnl
\gcl{1}\gbr\gnl
\gob{1}{Z}\gob{1}{Y}\gob{1}{X}
\gend}}.
\end{equation} 
Here $c_{X, Y}=
\gbeg{2}{3}
\got{1}{X}\got{1}{Y}\gnl
\gbr\gnl
\gob{1}{Y}\gob{1}{X}
\gend$ 
is the notation for the braiding $c$ of $\Cc$; similarly, we denote by 
$ 
\gbeg{2}{3}
\got{1}{Y}\got{1}{X}\gnl
\gibr\gnl
\gob{1}{X}\gob{1}{Y}
\gend$ 
the inverse morphism of $c_{X, Y}$ in $\Cc$.  

Let $H$ be a quasi-bialgebra. It is well known that ${}_H{\cal M}$ is braided if and only if 
there exists an invertible element $R=R^1\ot R^2=r^1\ot r^2\in H\ot H$ (formal notation, summation 
implicitly understood) such that the following relations hold:
\begin{eqnarray}
(\Delta \ot \Id_H)(R)&=&X^2R^1x^1Y^1\ot X^3x^3r^1Y^2\ot
X^1R^2x^2r^2Y^3,\eqlabel{qt1}\\ 
(\Id_H\ot \Delta )(R)&=&x^3R^1X^2r^1y^1\ot x^1X^1r^2y^2\ot x^2R^2X^3y^3,\eqlabel{qt2}\\
\Delta ^{\rm cop}(h)R&=&R\Delta (h),~\forall~h\in H.\eqlabel{qt3}
\end{eqnarray}
We say in this case that $H$ is a quasitriangular, QT for short, quasi-bialgebra. Note that 
the braiding $c$ on ${}_H{\cal M}$ defined by $R$ as above is given by
\begin{equation}\eqlabel{braiQT}
c_{X, Y}(x\ot y)=R^2\cdot y\ot R^1\cdot x,~~\forall~~x\in X\in {}_H{\cal M},~~
y\in Y\in {}_H{\cal M}.
\end{equation}  
When we refer to a QT quasi-bialgebra or quasi-Hopf algebra we 
always indicate the $R$-matrix $R$ that produces the QT structure by pointing out the couple $(H, R)$. 
Also, if $t$ denotes a permutation of $\{1, 2, 3\}$, then we set
$\Phi _{t(1)t(2)t(3)}=X^{t^{-1}(1)}\otimes X^{t^{-1}(2)}\otimes
X^{t^{-1}(3)}$, and by $R_{ij}$ we denote the element obtained by acting with $R$ non-trivially in the $i^{th}$ and $j^{th}$
positions of $H\otimes H\otimes H$. 

For $(H, R)$ a QT quasi-Hopf algebra, $u$ is the element of $H$ defined by    
\begin{equation}\eqlabel{elmu}
u=S(R^2x^2\b S(x^3))\a R^1x^1.
\end{equation}
By \cite{bn3}, $u$ is an invertible element of $H$ and the following equalities hold:
\begin{eqnarray}\eqlabel{sinau}
&&\hspace*{-1cm}
S^2(h)=uhu^{-1},~\forall~h\in H,\eqlabel{ssinau}\\
&&\hspace*{-1cm}
S(\a )u=S(R^2)\a R^1,\eqlabel{extr}\\
&&\hspace*{-1cm}
R^1\b S(R^2)=S(\b u),\label{fu18}\\
&&\hspace*{-10mm}
\Delta(u)=
(R_{21}R)^{-1}f^{-1}(S\ot S)(f_{21})(u\ot u),\eqlabel{qrib7}\\
&&\hspace*{-10mm}
\Delta (S(u))=(R_{21}R)^{-1}(S(u)\ot S(u))(S\ot S)(f^{-1}_{21})f.\eqlabel{qrib6}
\end{eqnarray}

We move now to the balanced/ribbon case. 

\begin{definition}
(i) A a braided category $(\Cc, c)$ is called balanced if there exists a natural isomorphism 
$\eta=(\eta_V: V\ra V)_{V\in {\rm Ob}(\Cc)}$, called twist on $\Cc$,  
such that for all $V, W\in \Cc$,
\begin{equation}\eqlabel{ribboncateg1}
\eta _{V\otimes W}=(\eta _V\otimes \eta _W)\circ c_{W,V}\circ c_{V,W}.
\end{equation}

(ii) A balanced category $(\Cc, c, \eta)$ is called ribbon if, in addition, $\Cc$ is left rigid and 
\begin{equation}\eqlabel{ribboncateg2}
\eta _{V^*}=(\eta _V)^*
\end{equation}
for all $V\in \Cc$. If this is the case then $\eta$ is called a ribbon twist on $\Cc$. 
\end{definition}

For $H$ a quasi-bialgebra, when ${}_H{\cal M}$ is a balanced/ribbon category was studied in \cite{bpv}. 

\begin{proposition}\prlabel{whenQTisribbon}
Let $(H, R)$ be a finite dimensional QT quasi-Hopf algebra, so ${}_H\Mm^{\rm fd}$ is a rigid braided category. 
Set $R=R^1\ot R^2$ and $R_{21}=R^2\ot R^1$. Then ${}_H\Mm^{\rm fd}$ is, moreover, 

(i) a balanced category if and only if there exists an invertible central element $\eta\in H$ 
such that
\begin{equation}
\Delta(\eta)=(\eta\ot \eta)R_{21}R;\eqlabel{ribbonqHa2}
\end{equation}

(ii) a ribbon category if and only if there is an element $\eta\in H$ 
as in (i) satisfying also the condition 
\begin{equation}
S(\eta)=\eta.\eqlabel{ribbonqHa1}
\end{equation}
\end{proposition}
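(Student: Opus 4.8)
The plan is to translate the two categorical conditions into identities in $H$, using the standard identification of natural endo-transformations of the identity functor of ${}_H\Mm^{\rm fd}$ with central elements of $H$. Since $H$ is finite dimensional, $H\in{}_H\Mm^{\rm fd}$, and for every $V\in{}_H\Mm^{\rm fd}$ and every $v\in V$ the map $f_v\colon H\ra V$, $f_v(h)=h\cdot v$, is a morphism in ${}_H\Mm^{\rm fd}$. Hence, if $\eta=(\eta_V)_V$ is a natural transformation of the identity functor, naturality applied to $f_v$ gives $\eta_V(v)=\eta_V(f_v(1))=f_v(\eta_H(1))=\eta_H(1)\cdot v$, so $\eta_V$ is left multiplication by the element $\eta:=\eta_H(1)$ for all $V$; conversely, left multiplication by an arbitrary element of $H$ is natural. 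Under this correspondence $\eta_V$ is $H$-linear for all $V$ exactly when $\eta$ is central (it suffices to test on $V=H$), and $\eta$ is a natural isomorphism exactly when $\eta$ is invertible in $H$. Thus giving a twist on the braided category ${}_H\Mm^{\rm fd}$ amounts to giving an invertible central $\eta\in H$ subject to the twist axiom \equref{ribboncateg1} (and, in the ribbon case, to \equref{ribboncateg2}), which is what we analyse next.

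For (i), fix such an $\eta$ and compare the two sides of \equref{ribboncateg1} on the underlying vector space of $V\ot W$. The left-hand side $\eta_{V\ot W}$ is the action of $\Delta(\eta)\in H\ot H$ on $V\ot W$. On the right-hand side, a direct computation with the explicit braiding \equref{braiQT} shows that $c_{W,V}\circ c_{V,W}$ is the action of $R_{21}R$ on $V\ot W$ (one has to keep track of the order of the $R$-legs so as to land on $R_{21}R$ and not $RR_{21}$), while $\eta_V\ot\eta_W$ is the action of $\eta\ot\eta$, an element central in $H\ot H$. Therefore \equref{ribboncateg1} holds for all $V,W\in{}_H\Mm^{\rm fd}$ if and only if $\Delta(\eta)$ and $(\eta\ot\eta)R_{21}R$ act identically on every such $V\ot W$; taking $V=W=H$ and evaluating at $1\ot 1$, this is equivalent to the identity $\Delta(\eta)=(\eta\ot\eta)R_{21}R$, that is, to \equref{ribbonqHa2}. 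Together with the previous paragraph this proves (i).

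For (ii) it only remains to rephrase the condition $\eta_{V^*}=(\eta_V)^*$. Recall that the left $H$-action on the left dual $V^*$ is $(h\cdot\varphi)(v)=\varphi(S(h)v)$, so $\eta_{V^*}$ sends $\varphi\in V^*$ to the functional $v\mapsto\varphi(S(\eta)v)$. On the other hand $(\eta_V)^*$ is the transpose of left multiplication by $\eta$ on $V$, hence sends $\varphi$ to $\varphi\circ\eta_V$, i.e. to $v\mapsto\varphi(\eta v)$; here the elements $\a,\b$ and the reassociator $\Phi$ occurring in ${\rm ev}_V$ and ${\rm coev}_V$ drop out of $(\eta_V)^*$ precisely because $\eta_V$ is multiplication by the central element $\eta$. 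Consequently $\eta_{V^*}=(\eta_V)^*$ for all $V$ if and only if $\varphi(S(\eta)v)=\varphi(\eta v)$ for all $V$, all $\varphi\in V^*$ and all $v\in V$, which (testing on $V=H$ and separating points) is equivalent to $S(\eta)=\eta$, i.e. to \equref{ribbonqHa1}. Combined with (i), this gives (ii).

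All the computations involved are elementary. The only places where a little care is needed are the bookkeeping of the $R$-legs in the identity $c_{W,V}\circ c_{V,W}=R_{21}R\cdot(-)$, and the observation that the auxiliary data $\a,\b,\Phi$ defining the rigid structure of ${}_H\Mm^{\rm fd}$ are irrelevant when one transposes a central multiplication; I do not anticipate a genuine obstacle.
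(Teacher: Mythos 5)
Your proposal is correct and follows essentially the same route as the paper: identify natural endo-transformations of $\Id$ on ${}_H\Mm^{\rm fd}$ with (central, invertible) elements of $H$ via $\eta=\eta_H(1_H)$, translate \equref{ribboncateg1} into $\Delta(\eta)=(\eta\ot\eta)R_{21}R$ using \equref{braiQT}, and translate \equref{ribboncateg2} into $S(\eta)=\eta$. The paper states these equivalences without detail, whereas you supply the verifications (in particular that $\a$, $\b$ and $\Phi$ cancel in $(\eta_V)^*$ by centrality of $\eta$ and \equref{q6}), all of which check out.
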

\begin{proof}
Since $H$ is finite dimensional we can regard $H\in {}_H\Mm^{\rm fd}$ via its multiplication. 

(i) Suppose that $(\eta_V: V\ra V)_V$, indexed by $V\in {{}_H\Mm^{\rm fd}}$, defines a balanced structure 
on ${}_H\Mm^{\rm fd}$. Then $\eta_H$ is completely determined by $\eta:=\eta(1_H)$. Actually, 
\begin{equation}\eqlabel{etaforribbonqHa}
\eta_V(v)=\eta\cdot v,~\forall~v\in V.
\end{equation} 
Thus \equref{ribboncateg1} is satisfied for all $V, W\in {}_H\Mm^{\rm fd}$ if and only 
if \equref{ribbonqHa2} holds. Furthermore, $\eta_V$ is an isomorphism for any $V\in {}_H{\cal M}^{\rm fd}$ if and only if $\eta$ is invertible, 
and $\eta_V$ is left $H$-linear, for all 
$V\in {}_H\Mm^{\rm fd}$, if and only if $\eta$ is a central element of $H$.

(ii) For the ribbon case, \equref{ribboncateg2} is equivalent to \equref{ribbonqHa1}. 
\end{proof}

The next definitions are imposed by the characterization in \prref{whenQTisribbon}. 

\begin{definition}\delabel{defribbQHA}
(i) We call a QT quasi-bialgebra or quasi-Hopf algebra $(H, R)$ balanced 
if there exists an invertible central element $\eta\in H$ satisfying \equref{ribbonqHa2}.

(ii) A QT quasi-Hopf algebra $(H, R)$ is called a 
ribbon quasi-Hopf algebra if there exists an invertible central element $\eta\in H$ 
satisfying \equref{ribbonqHa2} and \equref{ribbonqHa1}.
\end{definition}

The following result is \cite[Proposition 2.5]{drabant}. It says that if $\Cc$ is left rigid then $\Cc$ is balanced if and only if 
there is a natural transformation $\eta$ satisfying \equref{ribboncateg1}, i.e. the fact that $\eta$ is as well a natural isomorphism is automatic; 
furthermore, the inverse of the square of the twist $\eta$ is completely determined 
by the left rigid braided structure of the category.  

\begin{proposition}\prlabel{balancedfaarfromribbon}
Let $(\Cc, c)$ be a left rigid category and $\eta: \Id_\Cc\ra \Id_\Cc$ a natural transformation satisfying \equref{ribboncateg1}. Then, 
for all $V\in \Cc$,  $\eta_V$ is an isomorphism in $\Cc$ with inverse given by 
\begin{equation}\eqlabel{invofetadualbases}
\eta^{-1}_V=:
{{\footnotesize
\gbeg{3}{7}
\gvac{2}\got{1}{V}\gnl
\gdb\gcl{2}\gnl
\gcl{1}\gmp{\eta_{V^*}}\gnl
\gbr\gcl{1}\gnl
\gcl{1}\gibr\gnl
\gev\gcl{1}\gnl
\gvac{2}\gob{1}{V}
\gend}}
=
{{\footnotesize
\gbeg{3}{7}
\got{1}{V}\gnl
\gcl{1}\gdb\gnl
\gcl{1}\gcl{1}\gmp{\eta_{V^*}}\gnl
\gcl{1}\gbr\gnl
\gbr\gcl{1}\gnl
\gev\gcl{1}\gnl
\gvac{2}\gob{1}{V}
\gend
}},
\end{equation}
and therefore $(\Cc, c, \eta)$ is balanced. Consequently, $(\Cc, c, \eta)$ is ribbon if and only if, for all $V\in \Cc$,   
\begin{equation}\eqlabel{msquaretwist}
\eta^{-2}_V=({\rm ev}_V\ot \Id_V)(\Id_{V^*}\ot c^{-1}_{V, V})(c_{V, V^*}{\rm coev}_V\ot \Id_V). 
\end{equation}
\end{proposition}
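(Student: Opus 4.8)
The plan is to argue entirely within the graphical calculus of the excerpt, passing first to the strict case (legitimate by Mac Lane's coherence theorem). Two preparatory facts are needed. First, $\eta_{\un 1}=\Id_{\un 1}$: taking $V=W=\un 1$ in \equref{ribboncateg1} and using the compatibility $c_{\un 1,\un 1}=\Id_{\un 1}$ of the braiding with the unit forces $\eta_{\un 1}$ to be idempotent, hence the identity in the setting at hand; this is the only place where anything beyond ``$\eta$ is a natural transformation'' is used. Second, exploiting the naturality of $\eta$ along ${\rm coev}_V\colon\un 1\ra V\ot V^*$ and ${\rm ev}_V\colon V^*\ot V\ra\un 1$ together with \equref{ribboncateg1} applied to $V\ot V^*$ and to $V^*\ot V$ yields the two ``absorption'' identities
\[
(\eta_V\ot\eta_{V^*})\, c_{V^*,V}\, c_{V,V^*}\, {\rm coev}_V={\rm coev}_V,\qquad {\rm ev}_V\,(\eta_{V^*}\ot\eta_V)\, c_{V,V^*}\, c_{V^*,V}={\rm ev}_V,
\]
which say that a full double braiding of $V$ against $V^*$ created by, or annihilated into, $\un 1$ can be exchanged for the corresponding components of the twist (both identities use $\eta_{\un 1}=\Id_{\un 1}$, which is why they have this clean form).

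Armed with these, I would first check that the two diagrams on the right of \equref{invofetadualbases} represent the same morphism — a routine isotopy using naturality of $c$, the Yang--Baxter relation \equref{qybe}, and the zig-zag identities \equref{rrigid}. The heart of the argument is then to show that the morphism $\eta_V^{-1}$ so defined is a two-sided inverse of $\eta_V$. For $\eta_V\circ\eta_V^{-1}=\Id_V$ one draws $\eta_V^{-1}$ with an extra $\eta_V$-box on the free $V$-strand and uses naturality of $\eta$ (and \equref{ribboncateg1}) to reorganise all the twist boxes appearing, then the appropriate absorption identity to cancel them against the double braiding, after which the residual braidings collapse by \equref{qybe} and \equref{rrigid} to a single through-strand. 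The identity $\eta_V^{-1}\circ\eta_V=\Id_V$ is obtained by an entirely analogous manipulation using the other absorption identity. Hence $\eta$ is a natural isomorphism with inverse \equref{invofetadualbases}, and so $(\Cc,c,\eta)$ is balanced.

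For the final equivalence, recall that the transpose of $f\in{\rm End}_\Cc(V)$ is characterised by the interchange law $(\Id_V\ot f^*)\,{\rm coev}_V=(f\ot\Id_{V^*})\,{\rm coev}_V$ (equivalently ${\rm ev}_V\,(f^*\ot\Id_V)={\rm ev}_V\,(\Id_{V^*}\ot f)$). If $(\Cc,c,\eta)$ is ribbon, i.e. \equref{ribboncateg2} holds, substitute $\eta_{V^*}=(\eta_V)^*$ into the left-hand diagram of \equref{invofetadualbases}: the interchange law pulls this copy of $\eta_V$ off the $V^*$-leg of ${\rm coev}_V$ onto its $V$-leg, naturality of $c$ carries it through the two braidings onto the free strand, and naturality of $\eta$ at ${\rm ev}_V\ot\Id_V$ slides it onto the output; what remains after the $\eta_V$-box is exactly $({\rm ev}_V\ot\Id_V)(\Id_{V^*}\ot c^{-1}_{V,V})(c_{V,V^*}{\rm coev}_V\ot\Id_V)$, so $\eta_V^{-1}=\eta_V\circ\big[\text{right-hand side of }\equref{msquaretwist}\big]$, which is \equref{msquaretwist} after composing with $\eta_V^{-1}$. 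Conversely, assuming \equref{msquaretwist} one runs this backwards: write $\eta_V^{-1}=\big[\text{right-hand side of }\equref{msquaretwist}\big]\circ\eta_V$, absorb $\eta_V$ into the sandwich as $(\eta_V)^*$ by the same interchange law, and compare with \equref{invofetadualbases}; since inserting a morphism into the $\eta_{V^*}$-slot of \equref{invofetadualbases} is an invertible operation (the braidings are isomorphisms and the cup and cap cancel by \equref{rrigid}), this forces $\eta_{V^*}=(\eta_V)^*$, i.e. \equref{ribboncateg2}.

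I expect the main obstacle to be the diagrammatic proof that $\eta_V\circ\eta_V^{-1}=\Id_V$: one must track carefully which strands carry $V$ and which carry $V^*$, apply \equref{ribboncateg1} correctly to tensor products mixing an object with its dual, and make sure the braiding word left over after the twist boxes have been absorbed genuinely reduces to the identity via \equref{qybe} and \equref{rrigid}. The preliminary identity $\eta_{\un 1}=\Id_{\un 1}$ is a related point to watch, since it is precisely what guarantees the clean form of the absorption identities and hence the correctness of the inverse formula.
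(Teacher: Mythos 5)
Your overall route is the right one --- it is essentially the argument of Drabant that the paper itself invokes by citation rather than reproving (the paper offers no proof of this proposition, only the reference to \cite[Proposition 2.5]{drabant}): derive the two absorption identities from naturality of $\eta$ at ${\rm ev}_V$ and ${\rm coev}_V$, use the bijection between ${\rm End}_\Cc(V)$ and ${\rm Hom}_\Cc(\un{1}, V\ot V^*)$ furnished by the zig-zag identities to read off a one-sided inverse from each, and handle the ribbon equivalence by the transpose interchange law. The second and third paragraphs of your proposal are sound.

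The genuine gap is the very first step. From \equref{ribboncateg1} with $V=W=\un{1}$ you correctly get that $\eta_{\un{1}}$ is idempotent in the commutative monoid ${\rm End}_\Cc(\un{1})$, but an idempotent need not be the identity, and no appeal to ``the setting at hand'' can repair this: the zero natural transformation $\eta_V=0$ on ${\rm Vec}^{\rm fd}_k$ is natural, satisfies \equref{ribboncateg1}, and has $\eta_{\un{1}}=0$ idempotent but not invertible. This is not a pedantic point, because $\eta_{\un{1}}=\Id_{\un{1}}$ is exactly what makes your two absorption identities hold in the clean form you need; without it they acquire a factor of $\eta_{\un{1}}$ and the argument only produces $\eta_V\circ\eta_V^{-1}=e_V$ where $e_V$ is the action of $\eta_{\un{1}}$ on $V$, not $\Id_V$. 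In fact the same example shows the proposition is false as literally stated in the paper: the normalization $\eta_{\un{1}}=\Id_{\un{1}}$ (which is part of Drabant's hypotheses on a twist) must be assumed, not derived. So you should either add $\eta_{\un{1}}=\Id_{\un{1}}$ as a hypothesis, or at minimum assume $\eta_{\un{1}}$ is invertible (whence idempotency does give the identity); with that amendment the rest of your proof goes through.
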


Any left/rigid braided category is braided, so any ribbon category is rigid. Furthermore, for a ribbon category the right rigid structure  
can be constructed from the left rigid structure, braiding and twist in such a way that the left and 
right dual functors coincide. Thus the choice of the left duals in the definition of a ribbon category is irrelevant. 

More generally, if $(\Cc, c, \eta)$ is a left rigid balanced category, $V$ is an object of $\Cc$ and $V^*$ is the left dual of $V$ in $\Cc$  
with evaluation and coevaluation morphisms ${\rm ev}_V: V^*\ot V\ra \un{1}$ and ${\rm coev}_V: \un{1}\ra V\ot V^*$,  
then 
\begin{equation}\eqlabel{rightdualitysovereign}
(V^*, {\rm ev}'_V:={\rm ev}_Vc_{V,V^*}(\eta_V\ot \Id_{V^*}), {\rm coev}'_V:=(\eta_{V^*}\ot \Id_V)c_{V, V^*}{\rm coev}_V)
\end{equation} 
is a right dual for $V$ in $\Cc$. Furthermore, with respect to it the left and right dual functors coincide as strong monoidal 
functors, and therefore any left rigid balanced category is sovereign. In particular, any ribbon category is sovereign; note that in this case, 
from $\eta_{V^*}=(\eta_V)^*$ it follows that 
${\rm coev}'_V$ in \equref{rightdualitysovereign} can be restated as 
\begin{equation}\eqlabel{rightdualitysovereign2coev}
{\rm coev}'_V=
(\Id_{V^*}\ot \eta_V)c_{V, V^*}{\rm coev}_V. 
\end{equation}

Actually, we can characterize ribbon categories in terms of sovereign categories. That rigid braided balanced categories are actually 
braided sovereign categories was proved by Deligne \cite{deligne}. The other statements of the theorem below were taken form \cite[Proposition A.4]{henr}. 

\begin{theorem}\thlabel{ribbonvssovereign}
Let $(\Cc, c)$ be a left rigid braided category. Then 

(i) $(\Cc, c)$ is balanced if and only if $\Cc$ is sovereign;

(ii) $(\Cc, c)$ is ribbon if and only if it is sovereign and with respect to the rigid structure given by the fact that $\Cc$  
is sovereign we either have  
\begin{equation}\eqlabel{addcondforribbon}
{{\footnotesize
\gbeg{3}{5}
\gvac{2}\got{1}{V}\gnl
\gdbd\gvac{2}\gcl{1}\gnl
\gcl{1}\gbr\gnl
\gev\gcl{1}\gnl
\gvac{2}\gob{1}{V}
\gend
}}
=
{{\footnotesize
\gbeg{3}{5}
\got{1}{V}\gnl
\gcl{1}\gdb\gnl
\gbr\gcl{1}\gnl
\gcl{1}\gevd\gnl
\gob{1}{V}
\gend}}
~~\mbox{or}~~
{{\footnotesize
\gbeg{3}{5}
\got{1}{V^*}\gnl
\gcl{1}\gdbd\gnl
\gbr\gcl{1}\gnl
\gcl{1}\gev\gnl
\gob{1}{V^*}
\gend}}
=
{{\footnotesize
\gbeg{3}{5}
\gvac{2}\got{1}{V^*}\gnl
\gdb\gcl{1}\gnl
\gcl{1}\gbr\gnl
\gevd\gvac{2}\gcl{1}\gnl
\gvac{2}\gob{1}{V^*}
\gend}},~~\forall~V\in \Cc.
\end{equation}
\end{theorem}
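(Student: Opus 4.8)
The plan is to derive everything from three facts already in hand: the equivalence sovereign $\equiv$ pivotal (\thref{pivissovereign}), the explicit right duality \equref{rightdualitysovereign} attached to a twist, and \prref{balancedfaarfromribbon}, which for a left rigid category reduces ``ribbon'' to the single identity \equref{msquaretwist} once a natural transformation obeying \equref{ribboncateg1} is available. The guiding idea is that on a braided left rigid $\Cc$ the double dual functor $(-)^{**}$ carries two competing natural identifications with $\Id_\Cc$: the monoidal one furnished by a pivotal structure, and the ``Drinfeld'' one built from the braiding and the left duality, which is monoidal only up to the double braiding; the discrepancy between them is precisely a twist.

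\emph{Part (i).} The implication ``balanced $\Rightarrow$ sovereign'' needs no new work: the paragraph containing \equref{rightdualitysovereign} already shows that a twist $\eta$ promotes the left duals to right duals for which $(-)^*={}^*(-)$ as strong monoidal functors (this is Deligne's observation). For the converse, assume $\Cc$ sovereign and fix, via \thref{pivissovereign}, a pivotal structure $i\colon\Id_\Cc\to(-)^{**}$. Using only the braiding and the left duality, define the natural isomorphism $\mathfrak{u}\colon\Id_\Cc\to(-)^{**}$ by
\[
\mathfrak{u}_V\colon V\xrightarrow{\ \Id_V\ot{\rm coev}_{V^*}\ }V\ot V^*\ot V^{**}\xrightarrow{\ c_{V,V^*}\ot\Id\ }V^*\ot V\ot V^{**}\xrightarrow{\ {\rm ev}_V\ot\Id\ }V^{**}.
\]
A hexagon-and-naturality computation --- the categorical counterpart of the formula for $\Delta(u)$ recorded in \equref{qrib7} --- shows $\mathfrak{u}_{V\ot W}=\xi_{V,W}\circ(\mathfrak{u}_V\ot\mathfrak{u}_W)\circ(c_{W,V}\circ c_{V,W})^{-1}$, where $\xi$ is the monoidal constraint of $(-)^{**}$. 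Since $i$ is monoidal with that same constraint $\xi$, the natural automorphism $\eta_V:=\mathfrak{u}_V^{-1}\circ i_V$ of $\Id_\Cc$ obeys \equref{ribboncateg1} (the leftover double braiding moves to the correct side by naturality of $c$ applied to the $\eta$'s), so $\eta$ is a twist and $(\Cc,c)$ is balanced. This in fact sets up a bijection between twists on $(\Cc,c)$ and pivotal structures on $\Cc$.

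\emph{Part (ii).} By Part (i) we may, in both directions, assume $\Cc$ sovereign, take the induced pivotal structure and the corresponding canonical twist $\eta$, and describe the right rigid structure by \equref{rightdualitysovereign}; then, by definition, $(\Cc,c,\eta)$ is ribbon exactly when $\eta_{V^*}=(\eta_V)^*$ for all $V$, which by \prref{balancedfaarfromribbon} holds iff \equref{msquaretwist} holds. It therefore suffices to prove that, under sovereignty, the first identity in \equref{addcondforribbon} is equivalent to \equref{msquaretwist}. To do this I would substitute ${\rm ev}'_V={\rm ev}_Vc_{V,V^*}(\eta_V\ot\Id_{V^*})$ and ${\rm coev}'_V=(\eta_{V^*}\ot\Id_V)c_{V,V^*}{\rm coev}_V$ from \equref{rightdualitysovereign} into both sides of that identity; the crossing displayed in \equref{addcondforribbon} then collides with the crossings hidden inside ${\rm ev}'_V$ and ${\rm coev}'_V$, and one Yang--Baxter move \equref{qybe} together with naturality of $c$ and a snake identity \equref{rrigid} collapses the two sides to expressions that differ from the two sides of \equref{msquaretwist} only by the same invertible factor $\eta_V$; cancelling it yields \equref{msquaretwist}. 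Finally, replacing $V$ by $V^*$ throughout (and using naturality of $\eta$, i.e.\ $\eta_{V^{**}}=i_V\eta_Vi_V^{-1}$) turns the first option of \equref{addcondforribbon} into the second, so each of the two options, demanded for all objects, is equivalent to the ribbon condition and hence to the other.

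\emph{The main obstacle.} Both reductions are graphical bookkeeping, error-prone only because of the left/right asymmetry of the two dualities. In Part (i) the delicate point is to confirm that the monoidal constraint of $(-)^{**}$ produced by the pivotal structure $i$ is exactly the $\xi$ that appears for $\mathfrak{u}$, so that the two cancel, and that the residual double braiding ends up multiplying $\eta_V\ot\eta_W$ on the side demanded by \equref{ribboncateg1} rather than on the opposite side. In Part (ii) the delicate point is to track precisely which crossings annihilate when \equref{rightdualitysovereign} is fed into \equref{addcondforribbon}, so that the outcome is \equref{msquaretwist} on the nose and not a braided or transposed variant of it.
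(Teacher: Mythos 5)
Your proposal is sound in substance but routes both halves differently from the paper, so let me compare. For the converse in (i) the paper does not pass through the double dual at all: it directly writes down two candidate twists $\eta_V$ and $\theta_V$ in \equref{twistfrombrad&sovereign} by splicing the right duality $({\rm ev}'_V,{\rm coev}'_V)$ coming from sovereignty into the braiding, and asserts these satisfy \equref{ribboncateg1}. Your $\eta_V:=\mathfrak{u}_V^{-1}\circ i_V$ (Drinfeld morphism composed with the pivotal structure supplied by \thref{pivissovereign}) is the same construction in pivotal rather than sovereign language --- if you unpack $i_V$ as the map \equref{theta} and compose, you recover the paper's diagram --- and both presentations leave the same computation (your ``$\mathfrak{u}_{V\ot W}=\xi_{V,W}(\mathfrak{u}_V\ot\mathfrak{u}_W)(c_{W,V}c_{V,W})^{-1}$'', the categorical analogue of \equref{qrib7}) to be checked. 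The real divergence is in (ii): the paper \emph{cites} \cite[Proposition A.4]{henr} for the equivalence of the ribbon condition with the first identity of \equref{addcondforribbon}, and then spends its effort proving, by a direct graphical computation, that the \emph{second} identity is equivalent to $\eta_{V^*}=(\eta_V)^*$ for the canonical twist $\eta$ (showing the two sides of the second identity equal $\eta_{V^*}$ and $(\eta_V)^*$ respectively). You instead propose to prove the first equivalence from scratch by feeding \equref{rightdualitysovereign} into \equref{addcondforribbon} and reducing to \equref{msquaretwist}; that is a legitimate and self-contained alternative to the citation, and it is the more informative route.

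The one step I would not accept as written is your derivation of the second identity from the first by ``replacing $V$ by $V^*$ throughout.'' The second identity in \equref{addcondforribbon} is built from the duality data of $V$ (namely ${\rm ev}_V\colon V^*\ot V\to\un{1}$ and ${\rm coev}'_V\colon\un{1}\to{}^*V\ot V$) with the braiding on the two $V^*$ strands, whereas the first identity instantiated at $V^*$ involves ${\rm ev}_{V^*}$, ${\rm coev}'_{V^*}$ and hence the objects $V^{**}$ and ${}^*(V^*)$. Passing from one to the other requires transporting the duality data along the sovereign identification $V^{**}\cong V$ and checking that the braiding and the (co)evaluations match up under this transport; your parenthetical $\eta_{V^{**}}=i_V\eta_Vi_V^{-1}$ gestures at this but does not carry it out, and the two identities are not related by a literal substitution. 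The paper avoids this issue entirely by proving the second equivalence independently (second identity $\Leftrightarrow\eta_{V^*}=(\eta_V)^*$, which is the ribbon condition by definition). Either repair your substitution argument along these lines or prove the second equivalence directly as the paper does.
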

\begin{proof}
(i) We have just seen that a left rigid balanced category is sovereign. For the converse, 
if $(\Cc, c)$ is a braided sovereign category define, for all $V\in \Cc$, 
\begin{equation}\eqlabel{twistfrombrad&sovereign} 
\eta_V:={{\footnotesize
\gbeg{3}{5}
\gvac{2}\got{1}{V}\gnl
\gdbd\gvac{2}\gcl{1}\gnl
\gcl{1}\gbr\gnl
\gev\gcl{1}\gnl
\gvac{2}\gob{1}{V}
\gend
}}~~,~~
\eta^{-1}_V:=
{{\footnotesize
\gbeg{3}{5}
\got{1}{V}\gnl
\gcl{1}\gdb\gnl
\gibr\gcl{1}\gnl
\gcl{1}\gevd\gnl
\gob{1}{V}
\gend
}}~~;~~
{\bf \theta}_V=
{{\footnotesize
\gbeg{3}{5}
\got{1}{V}\gnl
\gcl{1}\gdb\gnl
\gbr\gcl{1}\gnl
\gcl{1}\gevd\gnl
\gob{1}{V}
\gend}}~~,~~
\theta^{-1}_V=
{{\footnotesize
\gbeg{3}{5}
\gvac{2}\got{1}{V}\gnl
\gdbd\gvac{2}\gcl{1}\gnl
\gcl{1}\gibr\gnl
\gev\gcl{1}\gnl
\gvac{2}\gob{1}{V}
\gend
}}.
\end{equation}
Then $\eta:=(\eta_V)_{V\in {\rm Ob}(\Cc)}$ and $\theta:=(\theta_V)_{V\in {\rm Ob}(\Cc)}$ are twists on $\Cc$ with inverses 
defined by $\eta^{-1}:=(\eta^{-1}_V)_{V\in {\rm Ob}(\Cc)}$ and $\theta^{-1}:=(\theta^{-1}_V)_{V\in {\rm Ob}(\Cc)}$, respectively. Hence 
$(\Cc, c, \eta)$ and $(\Cc, c, \theta)$ are braided balanced categories. 

(ii) By \cite[Proposition A.4]{henr} we know that (ii) is equivalent to the first equality in \equref{addcondforribbon}. So we only have to show that, for all $V\in \Cc$,  
$\eta_{V^*}=(\eta_V)^*$ if and only if the second equality in \equref{addcondforribbon} holds. To this end, note that \equref{rightdualitysovereign} 
and the naturality of $c$ imply  
\[
{{\footnotesize
\gbeg{3}{6}
\gvac{2}\got{1}{V^*}\gnl
\gdb\gcl{1}\gnl
\gmp{\eta_V}\gcl{1}\gcl{1}\gnl
\gbr\gcl{1}\gnl
\gcl{1}\gevd\gnl
\gob{1}{V^*}
\gend
}}
=
{{\footnotesize
\gbeg{5}{6}
\gvac{4}\got{1}{V^*}\gnl
\gdbd\gvac{2}\gdb\gcl{1}\gnl
\gcl{1}\gbr\gcl{1}\gcl{1}\gnl
\gev\gbr\gcl{1}\gnl
\gvac{2}\gcl{1}\gevd\gnl
\gvac{2}\gob{1}{V^*}
\gend
}}
=
{{\footnotesize
\gbeg{5}{6}
\gvac{4}\got{1}{V^*}\gnl
\gdbd\gvac{4}\gcl{2}\gnl
\gcl{1}\gcn{1}{1}{1}{5}\gnl
\gcl{1}\gdb\gevd\gnl
\gev\gcl{1}\gnl
\gvac{2}\gob{1}{V^*}
\gend}}
=\Id_{V^*},
\]
and therefore
\begin{eqnarray*}
&&
{{\footnotesize
\gbeg{3}{5}
\gvac{2}\got{1}{V^*}\gnl
\gdb\gcl{1}\gnl
\gcl{1}\gbr\gnl
\gevd\gvac{2}\gcl{1}\gnl
\gvac{2}\gob{1}{V^*}
\gend}}
=
{{\footnotesize
\gbeg{3}{6}
\gvac{2}\got{1}{V^*}\gnl
\gdb\gcl{2}\gnl
\gsbox{2}\gnot{\hspace*{5mm}\eta_{V\ot V^*}}\gnl
\gcl{1}\gbr\gnl
\gevd\gvac{2}\gcl{1}\gnl
\gvac{2}\gob{1}{V^*}
\gend
}}
=
{{\footnotesize
\gbeg{3}{8}
\gvac{2}\got{1}{V^*}\gnl
\gdb\gcl{1}\gnl
\gbr\gcl{1}\gnl
\gbr\gcl{1}\gnl
\gmp{\eta_V}\gmp{\eta_{V^*}}\gcl{1}\gnl
\gcl{1}\gbr\gnl
\gevd\gvac{2}\gcl{1}\gnl
\gvac{2}\gob{1}{V^*}
\gend
}}
=
{{\footnotesize
\gbeg{3}{8}
\gvac{2}\got{1}{V^*}\gnl
\gdb\gcl{1}\gnl
\gmp{\eta_V}\gcl{1}\gcl{1}\gnl
\gbr\gcl{1}\gnl
\gbr\gcl{1}\gnl
\gcl{1}\gbr\gnl
\gevd\gvac{2}\gmp{\eta_{V^*}}\gnl
\gvac{2}\gob{1}{V^*}
\gend
}}
=
{{\footnotesize
\gbeg{3}{6}
\gvac{2}\got{1}{V^*}\gnl
\gdb\gcl{1}\gnl
\gmp{\eta_V}\gcl{1}\gcl{1}\gnl
\gbr\gcl{1}\gnl
\gmp{\eta_{V^*}}\gevd\gnl
\gob{1}{V^*}
\gend}}
=\eta_{V^*},\\ 
&&
(\eta_V)^*=
{{\footnotesize
\gbeg{5}{6}
\got{1}{V^*}\gnl
\gcl{1}\gdbd\gvac{2}\gdb\gnl
\gcl{1}\gcl{1}\gbr\gcl{3}\gnl
\gcl{1}\gev\gcl{1}\gnl
\gwev{4}\gnl
\gvac{4}\gob{1}{V^*}
\gend
}}
=
{{\footnotesize
\gbeg{5}{5}
\got{1}{V^*}\gnl
\gcl{1}\gdbd\gvac{2}\gdb\gnl
\gcl{1}\gibr\gcl{1}\gcl{2}\gnl
\gev\gev\gcl{1}\gnl
\gvac{4}\gob{1}{V^*}
\gend
}}
=
{{\footnotesize
\gbeg{3}{5}
\got{1}{V^*}\gnl
\gcl{1}\gdbd\gnl
\gcl{1}\gibr\gnl
\gev\gcl{1}\gnl
\gvac{2}\gob{1}{V^*}
\gend
}}
=
{{\footnotesize
\gbeg{3}{5}
\got{1}{V^*}\gnl
\gcl{1}\gdbd\gnl
\gbr\gcl{1}\gnl
\gcl{1}\gev\gnl
\gob{1}{V^*}
\gend
}}~,
\end{eqnarray*}
and this finishes the proof.
\end{proof}
Hence, for a braided sovereign category $\Cc$ we have two twists $\eta$ and $\theta$ which are equal if and only if one of them 
provides a ribbon structure on $\Cc$. If this is the case then $\Cc$ endowed with the pivotal structure produced by the ribbon twist 
$\eta=\theta$ is a spherical category. Note that the converse is also true, provided that $\Cc$ is semisimple; 
see \cite[Proposition A.4]{henr}.  
\section{Sovereign and spherical quasi-Hopf algebras}\selabel{sovereignqHa}
\setcounter{equation}{0}
If $H$ is a quasi-bialgebra, the category ${}_H{\cal M}$ of left $H$-representations is monoidal. If $U, V$ are left 
$H$-modules then the tensor product between $U$ and $V$ is the tensor product over $k$ equipped with the left $H$-module 
structure given by $\Delta$, i.e. $h\cdot (u\ot v)=h_1\cdot u\ot h_2\cdot v$, for all $h\in H$, $u\in U$ and $v\in V$. 
The associativity constraint on ${}_H{\cal M}$ is the following:
for $U, V, W\in {}_H{\cal M}$, $a_{U, V, W}: (U\ot V)\ot W\to U\ot (V\ot W)$ is given by
\[
a_{U, V, W}((u\ot v)\ot w)=X^1\cdot u\ot (X^2\cdot v\ot X^3\cdot w).
\]
The unit object is $k$ considered as a left $H$-module via $\va$, the counit of $H$. The left and right unit constraints are 
the same as for the category ${}_k{\cal M}$ of $k$-vector space.  

Let ${}_H{\cal M}^{\rm fd}$ be the full subcategory of ${}_H{\cal M}$ consisting of finite dimensional vector spaces. Then 
${}_H{\cal M}^{\rm fd}$ is a category with left duality, see \reref{sovereinstrleftmodqHa} below. 
Our goal is to see when ${}_H{\cal M}^{\rm fd}$ is a 
sovereign (resp. spherical) category, in the sense of \deref{monsovcateg}. By the general results presented in \sseref{moncateg}, 
it comes up that sovereign structures on ${}_H{\cal M}^{\rm fd}$ are in a one to one correspondence 
with the pivotal ones, and that the latter are completely determined by certain elements of $H$.  

\begin{proposition}\prlabel{sovereinstrquasi}
Let $H$ be a finite dimensional quasi-Hopf algebra over a field $k$. Then we
have a bijective correspondence between 

$(i)$ pivotal structures $i$ on ${\cal C}={}_H{\cal M}^{\rm fd}$;

$(ii)$ sovereign structures on ${\cal C}={}_H{\cal M}^{\rm fd}$; 
 
$(iii)$ invertible elements ${\mf g}_i\in H$ satisfying 
\begin{eqnarray}
&&S^2(h)={\mf g}_i^{-1}h{\mf g}_i,~\forall~h\in H,\label{prop1}\\
&&\Delta({\mf g}_i)=({\mf g}_i\ot {\mf g}_i)(S\ot S)(f_{21}^{-1})f,\label{pivstr2} 
\end{eqnarray} 
where $f=f^1\ot f^2$ is the Drinfeld twist defined in \equref{f} and 
$f_{21}=f^2\ot f^1$.  
\end{proposition}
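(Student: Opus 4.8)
The equivalence of $(i)$ and $(ii)$ is exactly \thref{pivissovereign}, so the plan is to establish the bijection between $(i)$ and $(iii)$ and then transport it along that equivalence. I would begin by recording the left rigid structure of $\Cc={}_H\Mm^{\rm fd}$: for a finite dimensional $V$, the dual $V^*$ is the $k$-linear dual of $V$ with $H$-action precomposed with $S$, the evaluation and coevaluation morphisms of $V$ and of a tensor product $V\ot W$ being built from $\a$, $\b$ and $\Phi$ (see \reref{sovereinstrleftmodqHa}). Since $\dim_k H<\infty$ every object is finite dimensional, so the canonical $k$-linear map $V\ra V^{**}$ is an isomorphism; under it the $H$-module $V^{**}$ is $V$ with $H$-action precomposed with $S^2$, and likewise $(V\ot W)^{**}\cong V\ot W$. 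The key structural input, which is the computational heart of the proof, is that under these identifications the structure isomorphism $\nu_{V,W}\colon V^{**}\ot W^{**}\ra(V\ot W)^{**}$ of the strong monoidal functor $(-)^{**}$ becomes left multiplication by a fixed invertible element of $H\ot H$, which I expect to be $f^{-1}(S\ot S)(f_{21})$.

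For $(i)\Rightarrow(iii)$ the plan is a Yoneda argument at the regular representation $H\in\Cc$. Given a pivotal structure $i\colon\Id_\Cc\ra(-)^{**}$, naturality with respect to the right multiplications $R_a\colon x\mapsto xa$ (which are morphisms in $\Cc$), together with the fact that the double transpose of a $k$-linear map is, under the canonical identifications, that map itself, forces $i_H$ to be left multiplication by an invertible element; write it ${\mf g}_i^{-1}:=i_H(1_H)$. Comparing the regular action on the source with the $S^2$-twisted action on $H^{**}\cong H$ then yields $S^2(h){\mf g}_i^{-1}={\mf g}_i^{-1}h$ for all $h$, that is, \eqref{prop1}. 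Pushing naturality through the morphisms $\ell_v\colon H\ra V$, $x\mapsto x\cdot v$, shows that $i_V(v)$ corresponds to ${\mf g}_i^{-1}\cdot v$ for every $V$, so $i$ is determined by ${\mf g}_i$. Conversely, for any invertible ${\mf g}_i$ satisfying \eqref{prop1} the family $i_V(v):={\mf g}_i^{-1}\cdot v\in V\cong V^{**}$ is $H$-linear (by \eqref{prop1}), natural and an isomorphism; thus, disregarding monoidality for the moment, invertible solutions of \eqref{prop1} correspond bijectively to natural isomorphisms $\Id_\Cc\ra(-)^{**}$.

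It remains to see that such an $i$ is \emph{monoidal} precisely when \eqref{pivstr2} holds. For this I would compute $\nu_{V,W}$ explicitly on ${}_H\Mm^{\rm fd}$ by composing, with the appropriate twist, two instances of the natural isomorphism $\l$ of \equref{firstisomdual}, carrying the $\a$, $\b$, $\Phi$ contributions through; the outcome is the algebraic identity $\Delta(S^2(h))=f^{-1}(S\ot S)(f_{21})(S^2\ot S^2)(\Delta(h))(S\ot S)(f_{21}^{-1})f$, which follows formally from the defining property \equref{ca} of the Drinfeld twist applied to $S(h)$ and rearranged with \equref{f}--\equref{g}, and which exhibits $\nu_{V,W}$ as left multiplication by $f^{-1}(S\ot S)(f_{21})$. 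Evaluating the monoidality axiom $\nu_{V,W}\circ(i_V\ot i_W)=i_{V\ot W}$ at $V=W=H$ on $1_H\ot1_H$ then collapses it to $\Delta({\mf g}_i)^{-1}=f^{-1}(S\ot S)(f_{21})({\mf g}_i^{-1}\ot{\mf g}_i^{-1})$, equivalently \eqref{pivstr2}; compatibility with the unit constraints is automatic, since applying $\va\ot\va$ to \eqref{pivstr2} and using $\va\circ S=\va$ and $\va(f^1)\va(f^2)=1$ forces $\va({\mf g}_i)=1$. The converse is the same computation read in reverse. Transporting along \thref{pivissovereign} gives the correspondence with the sovereign structures $(ii)$, and the resulting description of ${\mf g}_i$ matches the characterization of pivotality in \cite{bctInv}.

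The step I expect to be the main obstacle is exactly the derivation of the closed form $\nu_{V,W}=f^{-1}(S\ot S)(f_{21})$ (equivalently the displayed identity for $\Delta\circ S^2$): a careful but essentially formal manipulation of the antipode axioms \equref{q5}--\equref{q6} and of \equref{ca}, in the spirit of the manipulation used in the excerpt to obtain \equref{phig}. Once that and the explicit left rigid structure are in place, everything else is routine bookkeeping.
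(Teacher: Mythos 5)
Your argument is correct, but it is worth saying up front that the paper's own proof of this proposition is a two-line citation: the bijection $(i)\Leftrightarrow(ii)$ is delegated to \thref{pivissovereign} (as you do), and the bijection $(i)\Leftrightarrow(iii)$ is quoted from \cite[Proposition 4.2]{bctInv} with no argument given. What you have written is therefore a from-scratch reconstruction of the content of that cited result rather than an alternative to anything in the paper. Your reconstruction is sound: the Yoneda-style argument at the regular representation correctly forces $i_H$ to be left multiplication by an invertible element (right multiplications are $H$-linear, and for $H$-linear maps the categorical transpose built from ${\rm ev}$, ${\rm coev}$, $\a$, $\b$, $\Phi$ collapses to the ordinary linear transpose, so $R_a^{**}=R_a$ under the canonical identifications), left $H$-linearity into the $S^2$-twisted module gives exactly (\ref{prop1}), and the monoidality axiom evaluated at $H\ot H$ gives $\Delta({\mf g}_i^{-1})=\nu({\mf g}_i^{-1}\ot{\mf g}_i^{-1})$, which with $\nu=f^{-1}(S\ot S)(f_{21})$ inverts to (\ref{pivstr2}); your displayed identity $\Delta(S^2(h))=f^{-1}(S\ot S)(f_{21})(S^2\ot S^2)(\Delta(h))(S\ot S)(f_{21}^{-1})f$ does follow formally from \equref{ca} applied to $S(h)$, using that $S\ot S$ is an anti-homomorphism of $H\ot H$. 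The one place where your write-up remains a plan rather than a proof is exactly the step you flag: that identity shows your candidate element intertwines the two module structures on $V\ot W$, but identifying it as the \emph{canonical} structure morphism $\nu_{V,W}$ of $(-)^{**}$ still requires composing the two instances of $\l$ from \equref{firstisomdual} and tracking the $\a$, $\b$, $\Phi$ contributions (naturality only pins $\nu_{V,W}$ down to left multiplication by \emph{some} fixed intertwiner). That computation is precisely what the citation to \cite{bctInv} buys the authors; once it is carried out, your proof is complete and matches the cited characterization.
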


\begin{proof}
The bijection between (i) and (ii) is established by \thref{pivissovereign}, while the equivalence between (i) and (iii) was established in 
\cite[Proposition 4.2]{bctInv}. 
\end{proof}

\begin{definition}
A sovereign quasi-Hopf algebra is a quasi-Hopf algebra (not necessarily finite dimensional) $H$ for which there exists an invertible 
element ${\mf g}\in H$ satisfying (\ref{prop1}) and (\ref{pivstr2}). 
\end{definition}

Thus the definition of a sovereign quasi-Hopf algebra is designed in such a way that its category of finite-dimensional 
left representations is sovereign or, equivalently, pivotal.   

Under some conditions imposed to the field $k$, a family of sovereign quasi-Hopf algebras $H$ is given by the semisimple ones. 
Recall that $H$ is semisimple if it is semisimple as an algebra, and that this is equivalent to the existence 
of a left integral $t$ in $H$ (i.e. of an element $t\in H$ obeying $ht=\va(h)t$, for all $h\in H$) such that $\va(t)=1$. 
If this is the case then $t$ is also a right integral in $H$, that is $th=\va(h)t$, for all $h\in H$. We refer to 
\cite{pan} for more details related to this topic.     

\begin{example}\exlabel{ssissovereign}
A finite dimensional semisimple quasi-Hopf algebra over an algebraic closed field of characteristic zero is sovereign 
via the element ${\mf g}:=q^2t_2p^2S(q^1t_1p^1)$, where $p_R$, $q_R$ are the elements defined by
\begin{eqnarray*}
p_R&=&p^1\ot p^2=x^1\ot x^2\b S(x^3),\\
q_R&=&q^1\ot q^2=X^1\ot S^{-1}(\a X^3)X^2,
\end{eqnarray*}
and $t$ is a left (and so right as well) integral in $H$ satisfying $\va(t)=1$. 
\end{example}
\begin{proof}
It follows from \cite[Propositions 8.24 \& 8.23]{eno} that ${}_H{\cal M}^{\rm fd}$ has a unique 
pivotal ($\equiv$ sovereign) structure such that $\un{{\rm dim}}(V):={\rm tr}_r(\Id_V)$ equals ${\rm dim}_k(V)$, 
for any simple object of ${}_H{\cal M}^{\rm fd}$. Furthermore, by \cite{mn, sch} the element ${\mf g}\in H$ that 
produces this pivotal structure on ${}_H{\cal M}^{\rm fd}$ is just the one mentioned in the statement.  
\end{proof}

We refer to \cite{hnRMP, btRecon} for the explicit quasi-Hopf algebra structure of the quantum double $D(H)$ of a finite dimensional 
quasi-Hopf algebra $H$. 

\begin{example}
If $H$ is a finite-dimensional sovereign quasi-Hopf algebra then so is its quantum double $D(H)$.   
\end{example}
\begin{proof}
The category ${}_{D(H)}{\cal M}^{\rm fd}$ identifies to the right centre of the monoidal category 
${}_H{\cal M}^{\rm fd}$, see \cite{bcp, bpvo}. Thus our result follows from \cite[Exercise 7.13.6]{egnobook}. 
\end{proof}

\begin{remark}\relabel{sovereinstrleftmodqHa}
If $H$ is a sovereign quasi-Hopf algebra via an invertible element ${\mf g}$ of $H$ obeying 
(\ref{prop1}) and (\ref{pivstr2}), then ${}_H{\cal M}^{\rm fd}$ is sovereign with the following 
rigid structure.

If $V\in {}_H{\cal M}^{\rm fd}$, with $H$-action denoted by $H\ot V\ni h\ot v\ra h\cdot v\in V$, 
then the left dual of $V$ is $V^*$ with $H$-module structure $(h\cdot v^*)(v)=v^*(S(h)\cdot v)$, for 
all $v^*\in V^*$, $v\in V$ and $h\in H$,   
and ${\rm ev}_V$ and ${\rm coev}_V$ given by 
\begin{eqnarray*}
&&{\rm ev}_V: V^*\ot V\ni v^*\ot v\mapsto v^*(\alpha\cdot v)\in k,\\
&&{\rm coev}_V: k\ni 1_k\mapsto \beta\cdot v_i\ot v^i\in V\ot V^*,
\end{eqnarray*}
where $\{v_i, v^i\}_i$ are dual bases in $V$ and $V^*$ (summation implicitly understood). 
The right 
dual of $V$ is again $V^*$ considered as a left $H$-module via the antipode $S$ of $H$ as above, 
and with the evaluation and coevaluation morphisms given by 
\begin{eqnarray*}
&&{\rm ev}'_V: V\ot V^*\ni v\ot v^*\mapsto v^*({\mf g}^{-1}S^{-1}(\a)\cdot v)=v^*(S(\a){\mf g}^{-1}\cdot v)\in k,\\
&&{\rm coev}'_V: k\ni 1_k\mapsto v^i\ot S^{-1}(\b){\mf g}\cdot v_i=v^i\ot {\mf g}S(\b)\cdot v_i\in V^*\ot V,
\end{eqnarray*}
summation implicitly understood, where $\{v_i, v^i\}_i$ are dual bases in $V$ and $V^*$. 
\end{remark} 

When we refer to a sovereign quasi-Hopf algebra we always indicate
the element that produces the sovereign structure, by pointing out the couple $(H, {\mf g})$. Also, for $V$ a left $H$-module 
we denote by ${\rm End}_H(V)$ the set of $H$-endomorphisms of $V$. 

\begin{corollary}
If $(H, {\mf g})$ is a sovereign quasi-Hopf algebra and $V$ is a finite dimensional left $H$-module then 
for all $f\in {\rm End}_H(V)$ we have 
\[
{\rm tr}_l(f)={\rm tr}(V\ni v\mapsto f({\mf g}S(\b)\a\cdot v)\in V)~~\mbox{and}~~
{\rm tr}_r(f)={\rm tr}(V\ni v\mapsto f({\mf g}^{-1}\b S(\a)\cdot v)\in V),
\]
where by ${\rm tr}(\chi)$ we denote the usual trace of a $k$-linear endomorphism $\chi$. 
\end{corollary}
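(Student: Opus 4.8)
The plan is to unwind the diagrammatic definitions of ${\rm tr}_l$ and ${\rm tr}_r$ in \equref{defoflrtraces} using the explicit left/right rigid structure of ${}_H{\cal M}^{\rm fd}$ recorded in \reref{sovereinstrleftmodqHa}, and then to rearrange the outcome with the help of the cyclic invariance of the trace of a $k$-linear endomorphism.

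First I would record that, reading the cups and caps in \equref{defoflrtraces} with their orientations, for $f\in {\rm End}_H(V)$ one has
\[
{\rm tr}_l(f)={\rm ev}_V\circ(\Id_{V^*}\ot f)\circ {\rm coev}'_V\quad\mbox{and}\quad {\rm tr}_r(f)={\rm ev}'_V\circ(f\ot \Id_{V^*})\circ {\rm coev}_V
\]
as elements of ${\rm End}_H(k)=k$. Fix dual bases $\{v_i,v^i\}_i$ of $V$ and $V^*$ and write $L_a$ for the action of $a\in H$ on $V$. Substituting ${\rm coev}'_V(1_k)=v^i\ot {\mf g}S(\b)\cdot v_i$ and ${\rm ev}_V(v^*\ot v)=v^*(\a\cdot v)$ from \reref{sovereinstrleftmodqHa}, and using that $f$ is $H$-linear, I get
\[
{\rm tr}_l(f)=\sum_i v^i\bigl(\a\cdot f({\mf g}S(\b)\cdot v_i)\bigr)=\sum_i v^i\bigl(f(\a{\mf g}S(\b)\cdot v_i)\bigr)={\rm tr}\bigl(V\ni v\mapsto f(\a{\mf g}S(\b)\cdot v)\bigr).
\]
Similarly, using ${\rm coev}_V(1_k)=\b\cdot v_i\ot v^i$ and ${\rm ev}'_V(v\ot v^*)=v^*(S(\a){\mf g}^{-1}\cdot v)$, together with the $H$-linearity of $f$,
\[
{\rm tr}_r(f)=\sum_i v^i\bigl(S(\a){\mf g}^{-1}\cdot f(\b\cdot v_i)\bigr)={\rm tr}\bigl(V\ni v\mapsto f(S(\a){\mf g}^{-1}\b\cdot v)\bigr).
\]

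It then remains to match these with the asserted formulas. Here the only manipulation needed is the cyclic property ${\rm tr}(AB)={\rm tr}(BA)$: the endomorphism $v\mapsto f(\a{\mf g}S(\b)\cdot v)$ equals $L_\a\circ\bigl(f\circ L_{{\mf g}S(\b)}\bigr)$ (using $L_\a\circ f=f\circ L_\a$), hence has the same trace as $\bigl(f\circ L_{{\mf g}S(\b)}\bigr)\circ L_\a$, i.e. as $v\mapsto f({\mf g}S(\b)\a\cdot v)$; this gives the first formula. In the same way, $v\mapsto f(S(\a){\mf g}^{-1}\b\cdot v)=L_{S(\a)}\circ\bigl(f\circ L_{{\mf g}^{-1}\b}\bigr)$ has the same trace as $v\mapsto f({\mf g}^{-1}\b S(\a)\cdot v)$, which is the second formula.

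The computation is routine; the only point requiring care is bookkeeping --- reading the orientations and the order of the tensor factors in the caps and cups of \equref{defoflrtraces} correctly, and picking the convenient one among the two equivalent presentations of ${\rm ev}'_V$ and ${\rm coev}'_V$ in \reref{sovereinstrleftmodqHa}. No genuine obstacle arises: once the diagrams are expanded, the claim is immediate from the $H$-linearity of $f$ and the cyclic invariance of the trace.
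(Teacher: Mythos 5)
Your proposal is correct and follows essentially the same route as the paper: expand $\mathrm{tr}_{l}$ and $\mathrm{tr}_{r}$ via the explicit (co)evaluation maps of Remark~\ref{re:sovereinstrleftmodqHa} in dual bases, then use $H$-linearity of $f$ together with cyclicity of the ordinary trace (which the paper phrases equivalently by moving the $\a$-action across the dual pairing as $S^{-1}(\a)$ and re-indexing the dual bases). The resulting formulas agree with the paper's.
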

\begin{proof}
By \reref{sovereinstrleftmodqHa} and \equref{defoflrtraces}, for all $f\in {\rm End}_H(V)$ we have 
\begin{eqnarray*}
{\rm tr}_l(f)&=&\le v^i, \a\cdot f({\mf g}S(\b)\cdot v_i)\ri\\
&=&\le S^{-1}(\a)\cdot v^i, f({\mf g}S(\b)\cdot v_i)\ri\\
&=&\le v^j, f({\mf g}S(\b)\a\cdot v_j)\ri
={\rm tr}(V\ni v\mapsto f({\mf g}S(\b)\a\cdot v)\in V),
\end{eqnarray*}
as stated. The formula for ${\rm tr}_r(f)$ can be computed in a similar manner, so we are done. 
\end{proof}

By analogy with the Hopf case \cite{barret1}, we call a quasi-Hopf algebra spherical if its category of left 
finite dimensional representations is spherical. By the above results we have the following.

\begin{proposition}
A quasi-Hopf algebra is spherical if and only if there exists an invertible element 
${\mf g}\in H$ satisfying (\ref{prop1}) and (\ref{pivstr2}), and such that 
\begin{equation}\eqlabel{sphericalcondquasi}
{\rm tr}(V\ni v\mapsto f({\mf g}S(\b)\a\cdot v)\in V)=
{\rm tr}(V\ni v\mapsto f({\mf g}^{-1}\b S(\a)\cdot v)\in V),
\end{equation}
for any finite dimensional left $H$-module $V$ and any $f\in {\rm End}_H(V)$. 
\end{proposition}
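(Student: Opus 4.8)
The plan is to combine the definition of a spherical quasi-Hopf algebra with the structural results already at hand, so that the proof reduces to bookkeeping. By definition, $H$ is spherical exactly when the category ${}_H{\cal M}^{\rm fd}$ admits a spherical structure, i.e.\ a sovereign structure whose associated left and right traces agree on every endomorphism. Thus the first step is to invoke \prref{sovereinstrquasi}: sovereign structures on ${}_H{\cal M}^{\rm fd}$ are in bijection with invertible elements ${\mf g}\in H$ satisfying (\ref{prop1}) and (\ref{pivstr2}). Fixing such a ${\mf g}$, the induced left and right rigid structure on ${}_H{\cal M}^{\rm fd}$ is the explicit one recorded in \reref{sovereinstrleftmodqHa}.

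The second step is to evaluate the traces for this structure. Feeding the evaluation and coevaluation morphisms of \reref{sovereinstrleftmodqHa} into the definitions \equref{defoflrtraces} — precisely the computation performed in the Corollary above — yields, for every finite dimensional left $H$-module $V$ and every $f\in {\rm End}_H(V)$,
\[
{\rm tr}_l(f)={\rm tr}\bigl(V\ni v\mapsto f({\mf g}S(\b)\a\cdot v)\in V\bigr),\qquad
{\rm tr}_r(f)={\rm tr}\bigl(V\ni v\mapsto f({\mf g}^{-1}\b S(\a)\cdot v)\in V\bigr).
\]
Hence the sovereign structure attached to a given ${\mf g}$ is spherical if and only if ${\mf g}$ satisfies \equref{sphericalcondquasi} for all $V$ and all $f$.

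Assembling the two steps gives the equivalence: $H$ is spherical if and only if ${}_H{\cal M}^{\rm fd}$ carries a spherical sovereign structure, if and only if there is an invertible ${\mf g}\in H$ satisfying (\ref{prop1}), (\ref{pivstr2}) and \equref{sphericalcondquasi}, which is the assertion. There is no real obstacle beyond this assembly; the only point needing a moment's care is the logical shape of the spherical condition — one must quantify existentially over ${\mf g}$ (equivalently, over sovereign structures), since a quasi-Hopf algebra can in principle carry several inequivalent sovereign structures, not all of them spherical.
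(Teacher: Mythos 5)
Your proposal is correct and follows exactly the route the paper intends: the paper states this proposition without a written proof ("By the above results we have the following"), relying precisely on the bijection of \prref{sovereinstrquasi}, the explicit rigid structure of \reref{sovereinstrleftmodqHa}, and the trace formulas of the preceding Corollary, which is the assembly you carry out. Your closing remark about quantifying existentially over ${\mf g}$ is a sensible clarification but does not change the argument.
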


As any ribbon category is spherical we get the following. 

\begin{example}
If $(H, R, \eta)$ is a ribbon quasi-Hopf algebra as in \deref{defribbQHA} then ${}_H{\cal M}^{\rm fd}$ is spherical 
via the element ${\mf g}=(u\eta)^{-1}$, where $u$ is as in \equref{elmu}.
\end{example}

An important class of spherical categories is defined by involutory Hopf algebras, which coincides 
to the class of semsimple Hopf algebras provided that $k$ has characteristic zero (see \cite{lr1}). 
As we will see a similar result holds for quasi-Hopf algebras. 

Recall from \cite{bctInv} that a quasi-Hopf algebra is called involutory if 
\begin{equation}\eqlabel{invqHa}   
S^2(h)=S(\b)\a h\b S(\a),~\forall~h\in H.
\end{equation}
If $H$ is involutory then $\a, \b\in H$ are invertible and $(\b S(\a))^{-1}=S(\b)\a$, 
cf. \cite[Lemma 3.2]{bctInv}. 

\begin{theorem}\thlabel{invisspherical}
Any involutory quasi-Hopf algebra is spherical, and therefore sovereign, too.
\end{theorem}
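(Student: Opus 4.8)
The plan is to invoke the characterization of spherical quasi-Hopf algebras established just above: by \prref{sovereinstrquasi} together with that criterion, it suffices to produce a single invertible element ${\mf g}\in H$ satisfying simultaneously (\ref{prop1}), (\ref{pivstr2}) and \equref{sphericalcondquasi}. Since $H$ is involutory, $\a$ and $\b$ are invertible and $(\b S(\a))^{-1}=S(\b)\a$ by \cite[Lemma 3.2]{bctInv}, so I take ${\mf g}:=\b S(\a)$, whence ${\mf g}^{-1}=S(\b)\a$. Then ${\mf g}^{-1}h{\mf g}=S(\b)\a h\b S(\a)$ for all $h\in H$, which is precisely the defining identity \equref{invqHa} of an involutory quasi-Hopf algebra; so (\ref{prop1}) holds automatically.

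The substantive point is (\ref{pivstr2}), i.e. $\Delta({\mf g})=({\mf g}\ot{\mf g})(S\ot S)(f_{21}^{-1})f$; equivalently, it asserts that ${\mf g}=\b S(\a)$ defines a pivotal structure on ${}_H\Mm^{\rm fd}$ (this may already be recorded in \cite{bctInv}). For a self-contained argument I would first reduce it. Writing $\Delta({\mf g})=\Delta(\b)\Delta(S(\a))$ and using $\Delta(\b)f^{-1}=\delta$, $f\Delta(\a)=\gamma$ from \equref{gdf}, $f\Delta(S(\a))f^{-1}=(S\ot S)(\Delta^{\rm cop}(\a))$ from \equref{ca}, $\Delta^{\rm cop}(\a)=f_{21}^{-1}\gamma_{21}$, and the fact that $S\ot S$ reverses products in $H\ot H$, one obtains
\[
\Delta({\mf g})=\delta\,(S\ot S)(\gamma_{21})\,(S\ot S)(f_{21}^{-1})\,f,
\]
so that, since $(S\ot S)(f_{21}^{-1})f$ is invertible, (\ref{pivstr2}) is equivalent to the identity $\delta\,(S\ot S)(\gamma_{21})=\b S(\a)\ot\b S(\a)$ in $H\ot H$. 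This last identity is where the involutory hypothesis does its work: I would verify it by expanding $\gamma$ and $\delta$ via \equref{gamma} and \equref{delta}, absorbing the reassociator factors with the $3$-cocycle condition \equref{q3}, simplifying with the antipode axioms \equref{q5} and \equref{q6}, and at the decisive step using \equref{invqHa} to rewrite the resulting $S^2$-terms. This bookkeeping is the main obstacle of the proof, and it is the only place where involutivity enters beyond (\ref{prop1}).

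Finally, \equref{sphericalcondquasi} is immediate for this ${\mf g}$ and requires no trace computation: since $\b S(\a)={\mf g}$ and $S(\b)\a={\mf g}^{-1}$, we get ${\mf g}S(\b)\a={\mf g}{\mf g}^{-1}=1$ and ${\mf g}^{-1}\b S(\a)={\mf g}^{-1}{\mf g}=1$, so for every finite dimensional left $H$-module $V$ and every $f\in {\rm End}_H(V)$ both sides of \equref{sphericalcondquasi} collapse to ${\rm tr}(f)$. Hence ${\mf g}=\b S(\a)$ satisfies all three conditions, so $H$ is spherical; and since a spherical category is sovereign by definition (equivalently, by \prref{sovereinstrquasi}), $H$ is sovereign as well.
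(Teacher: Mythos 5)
Your choice of ${\mf g}=\b S(\a)$, the verification of (\ref{prop1}), and the observation that both sides of \equref{sphericalcondquasi} collapse to the ordinary trace of $f$ are all correct and coincide with what the paper does. Your reduction of (\ref{pivstr2}) to the single identity $\delta\,(S\ot S)(\gamma_{21})=\b S(\a)\ot \b S(\a)$ is also valid: it uses only \equref{gdf}, \equref{ca}, and the fact that the flip and $S\ot S$ are respectively an automorphism and an anti-automorphism of $H\ot H$, and it correctly isolates where involutivity must enter.

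The problem is that this reduced identity is the entire content of the theorem, and you do not prove it. ``Expand $\gamma$ and $\delta$ via \equref{gamma} and \equref{delta}, absorb the reassociators with \equref{q3}, simplify with \equref{q5} and \equref{q6}, and use \equref{invqHa} at the decisive step'' is a plan, not an argument; you yourself call it the main obstacle, and nothing in your write-up shows that the bookkeeping actually closes. The paper's authors evidently did not take this route either: their proof of (\ref{pivstr2}) never expands $\gamma$ or $\delta$. Instead they apply the uniqueness statement of Drinfeld's \leref{prelantcoalgqHa} to the co-opposite quasi-Hopf algebra $H^{\rm cop}$ (with antipode $S^{-1}$), comparing the data $\ov{g}=\Delta^{\rm cop}\circ S$, $\ov{\rho}=\Delta^{\rm cop}(\b^{-1})$, $\ov{\sigma}=\Delta^{\rm cop}(\a^{-1})$ with the standard data built from $S^{-1}$; the hypotheses of the lemma are checked using the specifically involutory facts that $\a,\b$ are invertible, that $S(h_2)\b^{-1}h_1=\va(h)\b^{-1}$ and $h_2\a^{-1}S(h_1)=\va(h)\a^{-1}$, and that $\b S(\b\a)=\a^{-1}$ and $S(\b\a)\a=\b^{-1}$, all from \cite{bctInv}. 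The resulting twist $F$ is then identified as $f_{21}^{-1}(S\ot S)(f)({\mf g}^{-1}\ot {\mf g}^{-1})$ via \equref{invqHa} and \equref{ca}, and (\ref{pivstr2}) follows together with ${\mf g}=\b S(\a)=S^{-1}(\a)\b$. Until you either carry out your expansion in full or substitute a structural argument of this kind, your proof has a gap at its only non-trivial step.
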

\begin{proof}
We show that ${\mf g}:=\b S(\a)$ has all the required properties. In fact, by the above comments we have 
$S^2(h)={\mf g}^{-1}h{\mf g}$, for all $h\in H$, and the equality in \equref{sphericalcondquasi} 
is fulfilled since the both sides of it are equal to the usual trace of $f$. So it remains to show that 
${\mf g}=\b S(\a)$ satisfies (\ref{pivstr2}). 
If we assume $k$ algebraic closed of characteristic zero then this follows from 
\cite[Remarks 3.5 1)]{bctInv}. As we work over an arbitrary field, a proof for the fact that 
${\mf g}=\b S(\a)$ satisfies (\ref{pivstr2}) should be given. 
Towards this end, for the quasi-Hopf algebra $(H^{\rm cop}, \Delta^{\rm cop}, \va, (\Phi^{-1})^{321}:=x^3\ot x^2\ot x^1, 
S^{-1}, S^{-1}(\a), S^{-1}(\b))$ and the tensor product algebra $A=H\ot H$ consider $f, g, \r, \sigma$ as in \leref{prelantcoalgqHa}, i.e. 
\[
f=\Delta^{\rm cop},~g=\Delta^{\rm cop}\circ S^{-1},~\r=\Delta(S^{-1}(\a)),~\sigma=\Delta^{\rm cop}(S^{-1}(\b)).
\]
Define the anti-algebra map $\ov{g}=\Delta^{\rm cop}\circ S: H^{\rm cop}\ra H\ot H$ and the elements 
$\ov{\rho}=\Delta^{\rm cop}(\b^{-1})$, $\ov{\sigma}=\Delta^{\rm cop}(\a^{-1})\in H$. By \cite[Proposition 3.4]{bctInv} 
we have $S(h_2)\b^{-1}h_1=\va(h)\b^{-1}$ and $h_2\a^{-1}S(h_1)=\va(h)\a^{-1}$, for all $h\in H$, and therefore 
$(f, \ov{g}, \ov{\rho}, \ov{\sigma})$ satisfies for $H^{\rm cop}$ and $A=H\ot H$ the relations in \equref{qHaf5}. 
The ones in \equref{qHaf6} are satisfied as well since, 
according to \cite[Lemma 3.2]{bctInv}, $\b S(\b\a)=\a^{-1}$ and $S(\b\a)\a=\b^{-1}$. Therefore  
\begin{eqnarray*}
x^3\a^{-1}S(x^2)\b^{-1}x^1&=&
x^3\b S(\a)S(\a x^2\b)S(\b)\a x^1\\
&\equal{\equref{invqHa}}&
x^3S^{-1}(\a x^2\b )x^1=S^{-1}(S(x^1)\a x^2\b S(x^3))=1,
\end{eqnarray*}
and similarly
\begin{eqnarray*}
S(X^3)\b^{-1}X^2\a^{-1}S(X^1)&=&S(\a X^3)S(\b)\a X^2\b S(\a)S(X^1\b)\\
&\equal{\equref{invqHa}}&S(\a X^3)S^2(X^2)S(X^1\b)=S(X^1\b S(X^2)\a X^3)=1.
\end{eqnarray*}
Hence, \leref{prelantcoalgqHa} guarantees the existence and the uniqueness of an invertible element $F\in H\ot H$ 
satisfying, for all $h\in H$, the relations
\[
\Delta^{\rm cop}(S(h))=F\Delta^{\rm cop}(S^{-1}(h))F^{-1},~\Delta^{\rm cop}(\a^{-1})=\Delta^{\rm cop}(S^{-1}(\b))F,~
\Delta^{\rm cop}(\b^{-1})=F\Delta^{\rm cop}(S^{-1}(\a)).
\]
Now, from \equref{invqHa} and \equref{ca} we see that 
\begin{eqnarray*}
&&\hspace*{-2cm}
f_{21}^{-1}(S\ot S)(f)({\mf g}^{-1}\ot {\mf g}^{-1})\Delta^{\rm cop}(S^{-1}(h))({\mf g}\ot {\mf g})(S\ot S)(f^{-1})f_{21}\\
&=&f_{21}^{-1}({\mf g}^{-1}\ot {\mf g}^{-1})(S^{-1}\ot S^{-1})(f)\Delta^{\rm cop}(S^{-1}(h))(S^{-1}\ot S^{-1})(f^{-1})({\mf g}\ot {\mf g})f_{21}\\
&=&f_{21}^{-1}({\mf g}^{-1}\ot {\mf g}^{-1})(S^{-1}\ot S^{-1})(\Delta(h))({\mf g}\ot {\mf g})f_{21}\\
&=&f_{21}^{-1}(S\ot S)(\Delta(h))f_{21}=\Delta^{\rm cop}(S(h)),
\end{eqnarray*}  
for all $h\in H$. We get from here that $F=f_{21}^{-1}(S\ot S)(f)({\mf g}^{-1}\ot {\mf g}^{-1})$, which together with 
$\Delta^{\rm cop}(\a^{-1})=\Delta^{\rm cop}(S^{-1}(\b))F^{-1}$ and $\a^{-1}=S^{-1}(\a\b)\b$ leads to 
\[
\Delta(S^{-1}(\a)\b)=({\mf g}\ot {\mf g})(S\ot S)(f_{21}^{-1})f.
\]
But $S(\b S(\a))=S^2(\a)S(\b)=S(\b)\a^2\b S(\b\a)=S(\b)\a$ because, once more, $\a^{-1}=\b S(\b\a)$. Thus  
${\mf g}=\b S(\a)=S^{-1}(S(\b)\a)=S^{-1}(\a)\b$, and this finishes the proof.    
\end{proof}

It was proved in \cite[Proposition 4.4]{bctInv} that $D(H)$ is involutory if $H$ is so, provided that the relation 
\begin{equation}\eqlabel{impformulaforinvQDqHa}
\Delta(S(\b)\a)=f^{-1}(S\ot S)(f_{21})(S(\b)\a\ot S(\b)\a)
\end{equation}
holds. As ${\mf g}^{-1}=S(\b)\a$ it follows from the proof of \thref{invisspherical} that \equref{impformulaforinvQDqHa} is valid for any involutory quasi-Hopf algebra. 
So we get the following result, which in particular produces a new class of spherical quasi-Hopf algebras. It 
answer also in positive to a question raised in \cite{bctInv}. 

\begin{corollary}\colabel{invQDqHa}
A quasi-Hopf algebra $H$ is involutory if and only if so is its quantum double $D(H)$.
\end{corollary}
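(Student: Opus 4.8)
The plan is to reduce \coref{invQDqHa} to \cite[Proposition 4.4]{bctInv}. That result characterises the quantum double: $D(H)$ is involutory precisely when $H$ is involutory \emph{and} the auxiliary identity \equref{impformulaforinvQDqHa}, namely $\Delta(S(\b)\a)=f^{-1}(S\ot S)(f_{21})(S(\b)\a\ot S(\b)\a)$, holds, where $f=f^1\ot f^2$ is the Drinfeld twist of \equref{f} and $f_{21}=f^2\ot f^1$. Granting this, one implication of \coref{invQDqHa}, namely ``$D(H)$ involutory $\Rightarrow$ $H$ involutory'', is immediate, so the whole point is to show that \equref{impformulaforinvQDqHa} is automatically satisfied by every involutory quasi-Hopf algebra; then the extra hypothesis in \cite[Proposition 4.4]{bctInv} may be deleted and the two conditions become equivalent.

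To verify \equref{impformulaforinvQDqHa} I would reuse the computation at the end of the proof of \thref{invisspherical}. There one shows that, for $H$ involutory, the element ${\mf g}:=\b S(\a)=S^{-1}(\a)\b$ satisfies (\ref{pivstr2}), i.e. $\Delta({\mf g})=({\mf g}\ot {\mf g})(S\ot S)(f_{21}^{-1})f$, and moreover (by \cite[Lemma 3.2]{bctInv}) that $\a,\b$ are invertible with ${\mf g}^{-1}=S(\b)\a$. Now invert this identity: since $\Delta$ is an algebra map, $\Delta({\mf g})^{-1}=\Delta({\mf g}^{-1})=\Delta(S(\b)\a)$; on the other side the three factors reverse order, and because $S\ot S$ is an algebra anti-homomorphism of $H\ot H$ one has $[(S\ot S)(f_{21}^{-1})]^{-1}=(S\ot S)(f_{21})$, whence $\Delta(S(\b)\a)=f^{-1}(S\ot S)(f_{21})({\mf g}^{-1}\ot {\mf g}^{-1})=f^{-1}(S\ot S)(f_{21})(S(\b)\a\ot S(\b)\a)$, which is exactly \equref{impformulaforinvQDqHa}. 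All these inversions are legitimate: $f$ is invertible with $\va(f^1)f^2=\va(f^2)f^1=1$, and ${\mf g}$ is invertible by the above.

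Combining, both directions of \coref{invQDqHa} drop out: if $H$ is involutory then \equref{impformulaforinvQDqHa} holds, hence by \cite[Proposition 4.4]{bctInv} so is $D(H)$; and if $D(H)$ is involutory then, by the same proposition, $H$ is involutory. The only delicate point I anticipate is the bookkeeping in the inversion step --- keeping straight the reversal of the three factors in $({\mf g}\ot {\mf g})(S\ot S)(f_{21}^{-1})f$ together with the anti-multiplicativity of $S\ot S$ and the convention $f_{21}=f^2\ot f^1$ --- but this is routine, and no input beyond \cite[Proposition 4.4]{bctInv}, \cite[Lemma 3.2]{bctInv} and the proof of \thref{invisspherical} is needed.
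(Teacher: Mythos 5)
Your verification of \equref{impformulaforinvQDqHa} is exactly what the paper intends: the paper's one-line remark ``as ${\mf g}^{-1}=S(\b)\a$ it follows from the proof of \thref{invisspherical}'' is precisely your inversion of (\ref{pivstr2}), using that $\Delta$ is an algebra map, that $S\ot S$ is an anti-algebra endomorphism of $H\ot H$ (so it commutes with taking inverses), and that $(\b S(\a))^{-1}=S(\b)\a$ from \cite[Lemma 3.2]{bctInv}. So the forward implication of the corollary is handled the same way in both arguments, and your bookkeeping there is sound.

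The one point where you diverge, and where your argument is not fully supported, is the converse. You read \cite[Proposition 4.4]{bctInv} as an equivalence (``$D(H)$ is involutory precisely when $H$ is involutory and \equref{impformulaforinvQDqHa} holds'') and extract ``$D(H)$ involutory $\Rightarrow$ $H$ involutory'' from its ``only if'' half. The paper, however, cites that proposition only as a sufficient condition (``$D(H)$ is involutory \emph{if} $H$ is so, provided that \equref{impformulaforinvQDqHa} holds''), so the half you are invoking is not available as stated. The paper instead gets the converse for free from the observation that $H$ is a quasi-Hopf subalgebra of $D(H)$: the involutory identity \equref{invqHa} for $D(H)$, restricted to elements of $H$, is the involutory identity for $H$. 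You should replace your appeal to the unproved ``only if'' direction of \cite[Proposition 4.4]{bctInv} by this subalgebra argument; with that substitution the proof is complete and coincides with the paper's.
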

\begin{proof}
The direct implication follows from the above comments, while the converse follows since $H$ is a quasi-Hopf subalgebra of $D(H)$. 
\end{proof}

Hence all the examples of involutory quasi-Hopf algebras presented in \cite{bctInv} are as well examples of 
spherical (and implicitly sovereign) quasi-Hopf algebras. Another one is the following.

\begin{example}\exlabel{Cnissovereign}
Let $k$ be a field containing a primitive root of unity $q$ of degree $n$ (so in particular $n\not=0$ in $k$) and $C_n$ the cyclic group 
of order $n$, say generated by $g$. We endow the group algebra $k[C_n]$ of $k$ and $C_n$ with the comultiplication given by 
$\Delta(g)=g\ot g$ and $\va(g)=1$, for all $g\in G$, extended to the whole $k[C_n]$ as algebra morphisms. In this way we can see $k[C_n]$ as a 
quasi-bialgebra with reassociator $\Phi_q=\sum\limits_{i, j, l=0}^{n-1}q^{i\left[\frac{j+l}{n}\right]}1_i\ot 1_j\ot 1_l$, where 
$[a]$ stands for the integer part of the rational number $a$ and $1_i:=\frac{1}{n}\sum\limits_{a=0}^{n-1}q^{(n-i)a}g^a$, for all $1\leq i\leq n-1$. 
Furthermore, this structure of $k[C_n]$ can be completed up to a quasi-Hopf algebra one by defining $S(g)=g^{-1}$, extended to the whole 
$k[C_n]$ as an anti-morphisms of algebras, $\a=g^{-1}$ and $\b=1$. We denote by $H_q(n)$ this quasi-Hopf algebra structure of $k[C_n]$.  

$H_q(n)$ is an involutory quasi-Hopf algebra, and so $(H_q(n), g)$ is a spherical quasi-Hopf algebra. 
\end{example}
\begin{proof}
That $H_q(n)$ is a quasi-Hopf algebra is a well known fact, see for instance \cite{gel}. Actually, since $H_q(n)$ is commutative, the only thing we must check is that $\Phi_q$ is a normalized $3$-cocycle, that is $\Phi_q$ satisfies the relation \equref{q1}. This follows 
from the fact that $\Phi_q$ is the $3$-cocycle (in the Harrison cohomology, see \cite{bctKlein}) dual to the $3$-cocycle of the group 
$C_n$ corresponding to $q$. 

We have $S^2=\Id$ and because $H_n(q)$ is commutative we can also see $S^2(h)={\mf g}^{-1}h{\mf g}$, for all $h\in H_q(n)$, 
with ${\mf g}=\b S(\a)=g$. Thus $H_n(q)$ is involutory, and so \thref{invisspherical} applies.         
\end{proof}

It was proved in \cite[Theorem 7.2]{mn} that for a finite dimensional semisimple 
quasi-Hopf algebra $H$ over an algebraic closed field of characteristic zero one has ${\mf g}^{-1}=S({\mf g})$, 
where ${\mf g}$ is the element of $H$ responsible for the sovereign structure of $H$ as in \exref{ssissovereign}. 
As we will see, this is a consequence of a more general result valid for sovereign quasi-Hopf algebras 
(over arbitrary fields). 

\begin{proposition}
If $(H, {\mf g})$ is a sovereign quasi-Hopf algebra then ${\mf g}^{-1}=S({\mf g})$. 
\end{proposition}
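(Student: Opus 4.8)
The plan is to show that $z:={\mf g}S({\mf g})$ equals $1$; once we know $S$ is bijective this is exactly the assertion ${\mf g}^{-1}=S({\mf g})$. The input is the two defining properties of a sovereign element, $(\ref{prop1})$ and $(\ref{pivstr2})$, together with the standard facts about the Drinfeld twist recorded before the statement, in particular $(\ref{l3})$ and \equref{ca}.

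First I would extract the easy consequences of $(\ref{prop1})$. Since $S^2$ is conjugation by the invertible element ${\mf g}$, it is bijective, hence so is $S$, and $S^{-2}(h)={\mf g}h{\mf g}^{-1}$, so that ${\mf g}S(h){\mf g}^{-1}=S^{-1}(h)$ for all $h$. Writing $S^3$ in two ways, $S^3(h)=S(S^2(h))=S({\mf g})S(h)S({\mf g})^{-1}$ and $S^3(h)=S^2(S(h))={\mf g}^{-1}S(h){\mf g}$, and using that $S$ is onto, one gets $S({\mf g})kS({\mf g})^{-1}={\mf g}^{-1}k{\mf g}$ for every $k\in H$; equivalently $z={\mf g}S({\mf g})$ is central in $H$.

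The heart of the argument is to feed $(\ref{pivstr2})$ into an antipode axiom. With $f=f^1\ot f^2$ and $f^{-1}=g^1\ot g^2$ as in the paragraph of $(\ref{l3})$, $(\ref{pivstr2})$ reads $\Delta({\mf g})={\mf g}S(g^2)f^1\ot {\mf g}S(g^1)f^2$. I would then apply the second identity of \equref{q5}, namely $h_1\b S(h_2)=\va(h)\b$, to $h={\mf g}$, obtaining $\va({\mf g})\b={\mf g}S(g^2)f^1\b S(f^2)S^2(g^1)S({\mf g})$. Now $f^1\b S(f^2)=S(\a)$ by $(\ref{l3})$; combining $S(g^2)S(\a)=S(\a g^2)$ with $S^2(g^1)={\mf g}^{-1}g^1{\mf g}$ from $(\ref{prop1})$ and then ${\mf g}S(\a g^2){\mf g}^{-1}=S^{-1}(\a g^2)$ together with the centrality of $z$, the right-hand side collapses to $z\,S^{-1}(\a g^2)g^1$. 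Applying $S$ to $S^{-1}(\a g^2)g^1$ and invoking the other identity of $(\ref{l3})$, $S(g^1)\a g^2=S(\b)$, shows $S^{-1}(\a g^2)g^1=\b$. Hence $z\b=\va({\mf g})\b$.

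Finally I would run $z$ through the first identity of \equref{q6}: using that $z$ is central,
\[
z=z\left(X^1\b S(X^2)\a X^3\right)=X^1(z\b)S(X^2)\a X^3=\va({\mf g})\,X^1\b S(X^2)\a X^3=\va({\mf g}),
\]
so that ${\mf g}S({\mf g})=\va({\mf g})1$; applying $\va$ forces $\va({\mf g})^2=\va({\mf g})$, and since ${\mf g}$ is invertible we get $\va({\mf g})=1$, whence ${\mf g}S({\mf g})=1$. The only point needing care is the bookkeeping in the third paragraph — tracking the positions of ${\mf g}$, $S({\mf g})$, $f$ and $f^{-1}$ and spotting exactly when $(\ref{prop1})$ and $(\ref{l3})$ can be applied — but no deeper ingredient is required, and in particular the proof works over an arbitrary field and without finite-dimensionality of $H$.
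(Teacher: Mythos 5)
Your argument is correct, and its core is the same as the paper's: both proofs feed the formula $\Delta({\mf g})={\mf g}S(g^2)f^1\ot {\mf g}S(g^1)f^2$ from (\ref{pivstr2}) into the antipode axiom $h_1\b S(h_2)=\va(h)\b$, simplify with the two identities in (\ref{l3}) and with (\ref{prop1}), and then leverage $X^1\b S(X^2)\a X^3=1$ to extract ${\mf g}S({\mf g})=1$. Where you genuinely diverge is in how the element $z={\mf g}S({\mf g})$ is moved into position inside that last identity: you first prove that $z$ is central in $H$ by computing $S^3$ in two ways (a clean, reusable observation), after which the conclusion $z=zX^1\b S(X^2)\a X^3=\va({\mf g})$ is immediate; the paper instead avoids any centrality statement and proves the ad hoc identity $A\b B=A\b B\,{\mf g}S({\mf g})$ for all $A\ot B\in H\ot H$ by repeatedly applying (\ref{prop1}), then specializes $A\ot B=X^1\ot S(X^2)\a X^3$. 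A second cosmetic difference is the order of operations for the normalization: the paper gets $\va({\mf g})=1$ at the outset by applying $\va\ot\va$ to (\ref{pivstr2}), whereas you deduce $\va({\mf g})^2=\va({\mf g})$ at the end; both routes tacitly use the standard fact $\va\circ S=\va$, so neither is more economical. Your version is slightly tidier conceptually, the paper's avoids invoking surjectivity of $S$ for the centrality step, and both work over an arbitrary field without finite-dimensionality.
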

\begin{proof}
As ${\mf g}$ is invertible, it suffices to show that ${\mf g}S({\mf g})=1$. Indeed, by applying 
$\va\ot \va$ to the both sides of (\ref{pivstr2}) we get $\va({\mf g})=1$. Thus, by using again 
(\ref{pivstr2}) we see that 
\begin{eqnarray*}
\b&=&\va({\mf g})\b={\mf g}_1\b S({\mf g}_2)\\
&=&{\mf g}S(g^2)f^1\b S({\mf g}S(g^1)f^2)
\equal{(\ref{l3})}{\mf g}S(g^2)S(\a)S({\mf g}S(g^1))\\
&\equal{(\ref{l3})}&{\mf g}S^2(\b)S({\mf g})\equal{(\ref{prop1})}\b {\mf g}S({\mf g}).
\end{eqnarray*}
On the other hand, (\ref{prop1}) allows to compute that  
\[
A\b B=A\b {\mf g}S({\mf g})B=A\b {\mf g}S(S^{-1}(B){\mf g})=A\b {\mf g}S({\mf g}S(B))=A\b {\mf g}S^2(B)S({\mf g})
=A\b B{\mf g}S({\mf g}),
\]  
for all $A\ot B\in H\ot H$. By taking $A\ot B=X^1\ot S(X^2)\a X^3$ in the above equality, by \equref{q6} we conclude that 
$1={\mf g}S({\mf g})$, as needed.
\end{proof} 

\section{Balanced categories and balanced quasi-bialgebras}\selabel{balancedcatqHa}
\setcounter{equation}{0}
Kassel and Turaev \cite{kasturaev} associate to any rigid monoidal category $\Cc$ a ribbon category $\Dc(\Cc)$ 
and specialize this construction to the category of representations of a Hopf algebra. 
The importance of their construction resides in the fact that any ribbon category produces link invariants (see \cite{turaev}), 
and so out of any rigid monoidal category $\Cc$ we can construct link invariants taking values in the semigroup of the endomorphisms of the 
unit object $\un{1}$ of $\Cc$.  

In this section we present a more general construction, in the sense that in place of the 
centre we consider an arbitrary braided category. Our construction has the advantage that in the quasi-Hopf case it leads to a more conceptual and less 
computational proof for certain balanced/ribbon isomorphisms of categories. 
\subsection{Balanced categories obtained from braided categories}
Following the idea of Kassel and Turaev \cite{kasturaev}, balanced
categories from monoidal ones were obtained by Drabant in \cite{drabant}. The construction of Drabant generalizes as follows.  

\begin{proposition}\prlabel{balancedfrombraided}
Let $(\Cc, c)$ be a braided category. Denote by ${\cal B}(\Cc, c)$ the category whose

$\bullet$ objects are pairs $(V, \eta_V)$ consisting of an object $V\in \Cc$ and an automorphism 
$\eta_V: V\ra V$ of $V$ in $\Cc$;

$\bullet$ morphisms between $(V, \eta_V)$ and $(W, \eta_W)$ are morphisms 
$f: V\ra W$ in $\Cc$ fulfilling $\eta_Wf=f\eta_V$. 

Then ${\cal B}(\Cc, c)$ is a balanced category via the following structure:

(i) The tensor product on ${\cal B}(\Cc, c)$ is given by $(V, \eta_V)\ot (W, \eta_W)=(V\ot W, \eta_{V\ot W})$, with
\[
\eta_{V\ot W}=(\eta_V\ot \eta_W)c_{W, V}c_{V, W}, 
\]
and on morphisms it acts as the tensor product of $\Cc$. 
The unit object in ${\cal B}(\Cc, c)$ is $(\un{1}, \eta_{\un{1}}:=\Id_{\un{1}})$.  
Together with the associativity and the left and right unit constraints of $\Cc$ these give the 
monoidal structure on ${\cal B}(\Cc, c)$; 

(ii) The braiding on ${\cal B}(\Cc, c)$ is determined by the braiding $c$ of $\Cc$;

(iii) The balancing on ${\cal B}(\Cc, c)$ is produced by $\eta:=(\eta_V)_{(V, \eta_V)\in {\cal B}(\Cc, c)}$.  
\end{proposition}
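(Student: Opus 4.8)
The plan is to verify the axioms of a balanced category for ${\cal B}(\Cc, c)$ one at a time, transporting every piece of structure from $(\Cc, c)$ and using repeatedly that a morphism of ${\cal B}(\Cc, c)$ is simply a morphism of $\Cc$ that intertwines the chosen automorphisms. By Mac Lane's coherence theorem I may assume $\Cc$ is strict, so that the associativity and unit constraints are identities; the bookkeeping needed to reinstate them in the general case is routine. Write $D_{X, Y}:=c_{Y, X}c_{X, Y}: X\ot Y\ra X\ot Y$ for the double braiding, so that the automorphism prescribed on a tensor product is $\eta_{V\ot W}=(\eta_V\ot \eta_W)D_{V, W}$.

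First I would check that (i) gives a monoidal structure. On objects, $\eta_{V\ot W}$ is an automorphism of $V\ot W$ as a composite of such. On morphisms, if $f: (V, \eta_V)\ra (V', \eta_{V'})$ and $g: (W, \eta_W)\ra (W', \eta_{W'})$ lie in ${\cal B}(\Cc, c)$, then naturality of $c$ gives $(f\ot g)D_{V, W}=D_{V', W'}(f\ot g)$, and combined with $\eta_{V'}f=f\eta_V$ and $\eta_{W'}g=g\eta_W$ this yields $\eta_{V'\ot W'}(f\ot g)=(f\ot g)\eta_{V\ot W}$, so $f\ot g$ is again a morphism of ${\cal B}(\Cc, c)$; hence $\ot$ is a bifunctor. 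The unit object $(\un{1}, \Id_{\un{1}})$ is admissible because $D_{\un{1}, V}$ and $D_{V, \un{1}}$ are identities (the braiding with the unit object is trivial), so $\eta_{\un{1}\ot V}=\eta_V=\eta_{V\ot \un{1}}$, and the left and right unit constraints of $\Cc$ are therefore morphisms of ${\cal B}(\Cc, c)$. The substantive point is that the associativity constraint of $\Cc$ must be a morphism $(U\ot V)\ot W\ra U\ot (V\ot W)$ in ${\cal B}(\Cc, c)$, that is, it must intertwine $\eta_{(U\ot V)\ot W}$ and $\eta_{U\ot (V\ot W)}$; writing both automorphisms out and cancelling the isomorphism $\eta_U\ot \eta_V\ot \eta_W$ (permissible, using naturality of the associativity constraint), this comes down to the identity
\[
(D_{U, V}\ot \Id_W)\circ D_{U\ot V, W}=(\Id_U\ot D_{V, W})\circ D_{U, V\ot W},
\]
a standard consequence of the two hexagon axioms (graphically: the total double wrapping of three strands does not depend on whether one groups the first two or the last two, by coherence for braided categories). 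The pentagon and triangle axioms for ${\cal B}(\Cc, c)$ then hold because they hold in $\Cc$ and the constraints are literally those of $\Cc$.

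Next I would define the braiding of (ii) by $\tilde c_{(V, \eta_V), (W, \eta_W)}:=c_{V, W}$. Since $\eta_{W\ot V}c_{V, W}=(\eta_W\ot \eta_V)c_{V, W}c_{W, V}c_{V, W}$ and, by naturality of $c$, $c_{V, W}\eta_{V\ot W}=c_{V, W}(\eta_V\ot \eta_W)c_{W, V}c_{V, W}=(\eta_W\ot \eta_V)c_{V, W}c_{W, V}c_{V, W}$, the map $c_{V, W}$ is indeed a morphism $(V, \eta_V)\ot (W, \eta_W)\ra (W, \eta_W)\ot (V, \eta_V)$ in ${\cal B}(\Cc, c)$; it is natural in each variable because $c$ is, it is invertible with inverse $c^{-1}_{V, W}$, and its two hexagon identities coincide with those of $c$ because the associativity constraints agree. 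So $\tilde c$ is a braiding on ${\cal B}(\Cc, c)$.

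Finally, for (iii) I would take, at each object $(V, \eta_V)$, the twist to be $\eta_V$ itself. This is an automorphism of $V$ by hypothesis and, commuting trivially with itself, an isomorphism $(V, \eta_V)\ra (V, \eta_V)$ in ${\cal B}(\Cc, c)$. Naturality of the family $\eta=(\eta_{(V, \eta_V)})$ is automatic: for a morphism $f: (V, \eta_V)\ra (W, \eta_W)$ of ${\cal B}(\Cc, c)$, the naturality square is exactly the identity $\eta_Wf=f\eta_V$ defining such a morphism. And, by the very definitions of $\ot$ and $\tilde c$ on ${\cal B}(\Cc, c)$, the balancing equation \equref{ribboncateg1} for $(V, \eta_V)$ and $(W, \eta_W)$ is the tautology $\eta_{V\ot W}=(\eta_V\ot \eta_W)\tilde c_{(W, \eta_W), (V, \eta_V)}\tilde c_{(V, \eta_V), (W, \eta_W)}$. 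Hence $\eta$ is a twist and ${\cal B}(\Cc, c)$ is balanced. The only step that is neither a naturality argument nor a matter of unwinding definitions is the associativity compatibility in the first paragraph — the double-braiding identity displayed above — and this is the step I expect to be the main obstacle; it is, however, exactly the content of the hexagon axioms (equivalently, of coherence for braided monoidal categories).
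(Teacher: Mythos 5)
Your proof is correct and follows essentially the same route as the paper: both identify the compatibility of the associativity constraint with $\eta_{-\ot-}$ as the only non-routine point, reduce it (in the strict case) to an identity for the double braiding $c_{Y,X}c_{X,Y}$, and settle that identity by the hexagon axioms together with naturality of $c$ (equivalently, two applications of the Yang--Baxter equation, which is how the paper phrases it). The remaining verifications you spell out are exactly the ones the paper dismisses as trivial.
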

\begin{proof}
We check that the associativity constraint of $\Cc$ is a morphism in ${\cal B}(\Cc, c)$; 
the remaining details are trivial. Assuming $\Cc$ strict monoidal, this reduces to 
the equality $\eta_{(V\ot W)\ot T}=\eta_{V\ot (W\ot T)}$, for all $V, W, T\in \Cc$. In diagrammatic notation, 
the latter comes out as 
\[{{\footnotesize
\gbeg{3}{9}
\got{1}{V}\got{1}{W}\got{1}{T}\gnl
\gcl{1}\gbr\gnl
\gbr\gcl{1}\gnl
\gbr\gcl{1}\gnl
\gcl{1}\gbr\gnl
\gbr\gcl{1}\gnl
\gbr\gmp{\eta_T}\gnl
\gmp{\eta_V}\gmp{\eta_W}\gcl{1}\gnl
\gob{1}{V}\gob{1}{W}\gob{1}{T}
\gend
}}
=
{{\footnotesize
\gbeg{3}{9}
\got{1}{V}\got{1}{W}\got{1}{T}\gnl
\gbr\gcl{1}\gnl
\gcl{1}\gbr\gnl
\gcl{1}\gbr\gnl
\gbr\gcl{1}\gnl
\gcl{1}\gbr\gnl
\gmp{\eta_V}\gbr\gnl
\gcl{1}\gmp{\eta_W}\gmp{\eta_T}\gnl
\gob{1}{V}\gob{1}{W}\gob{1}{T}
\gend
}}~.
\]
To prove the above equality we compute the right hand side of it as follows: apply twice 
\equref{qybe}, and then apply the naturality of the braiding 
$c$ to the morphism $c_{W, V}c_{V. W}: V\ot W\ra V\ot W$. In this way we get the 
left hand side of the equality. 
\end{proof}

\begin{remark}\relabel{balancedFromQuasiHopf}
By \prref{balancedfrombraided} we can associate to any monoidal category $\Cc$ two balanced ones. 
Namely, ${\mf B}_l(\Cc):={\cal B}({\cal Z}_l(\Cc), c)$ and ${\mf B}_r(\Cc):={\cal B}({\cal Z}_r(\Cc), c)$, where 
${\cal Z}_{l/r}(\Cc)$ is the left/right center of $\Cc$. 

As a concrete example, we can take $\Cc={}_H{\cal M}$, $H$ a quasi-Hopf algebra with bijective antipode, 
in which case we have ${\mf B}_l(\Cc)={\cal B}({}_H^H\YD, c)$. Here ${}_H^H\YD$ is the category of left-right 
Yetter-Drinfeld modules over $H$, a braided category with braiding $c$ given by $c_{V, W}: V\ot W\ni v\ot w\mapsto v_{(-1)}\cdot w\ot v_{(0)}\in W\ot V$, for all $V, W\in {}_H^H\YD$, where $V\ni v\mapsto v_{(-1)}\ot v_{(0)}\in H\ot H$ is our notation for 
the right $H$-action on $V$ (more details can be found in \cite{bcp}). Analogously,  ${\mf B}_r(\Cc)={\cal B}({}_H\YD^H, \mf{c})$, where  
${}_H\YD^H$ is the category of left-right Yetter-Drinfeld modules over $H$ endowed with the braiding 
$\mf{c}$ defined by 
\begin{equation}\eqlabel{lryd4}
{\mf c}_{V, W}: V\ot W\ni v\ot w\mapsto w_{(0)}\ot w_{(1)}\cdot v\in W\ot V, 
\end{equation}
for all $V, W\in {}_H\YD^H$ (this time $W\ni w\mapsto w_{(0)}\ot w_{(1)}\in W\ot H$ is the notation for 
the right coaction of $H$ on $W$). 
\end{remark}

\prref{balancedfrombraided} can be applied also to the category ${}_H{\cal M}$, where $(H, R)$ is a 
QT quasi-Hopf algebra. We will exploit this fact in what follows.    

\subsection{A class of balanced quasi-Hopf algebras}\sselabel{balancedquasiHopf}
To a finite dimensional quasi-Hopf algebra $H$ we can associate a QT one, its 
quantum double $D(H)$. Furthermore, the category of $D(H)$-representations is braided isomorphic to the category of Yetter-Drinfeld modules. 
In this subsection we will go further by showing that to any QT quasi-bialgebra $(H, R)$ we can associate a balanced one, denoted by $H[\theta, \theta^{-1}]$, 
and that the category of $H[\theta, \theta^{-1}]$-representations is balanced isomorphic to ${\cal B}({}_H{\cal M}, c)$, where 
$c$ is the braiding of ${}_H{\cal M}$ defined by the $R$-matrix $R$. . 

More exactly, for $H$ a quasi-bialgebra we denote by 
$H[\theta, \theta^{-1}]$ the free $k$-algebra generated by $H$ and $\theta$, with relations 
$h\theta=\theta h$, for all $h\in H$, and $\theta\theta^{-1}=\theta^{-1}\theta=1$; 
by analogy with the commutative case and the terminology used in \cite{drabant}, we call 
$H[\theta, \theta^{-1}]$ the Laurent polynomial algebra over $H$.  

\begin{proposition}\prlabel{ribbquasibialgfromQTones}
Let $(H, R)$ be a QT quasi-bialgebra. Then $H[\theta, \theta^{-1}]$ 
is a balanced quasi-bialgebra with structure given by 
\[
\Delta\mid_H=\Delta_H~,~\Delta(\theta^{\pm})=(R_{21}R)^{\mp}(\theta^{\pm}\ot \theta^{\pm})~,~
\va\mid_H=\va_H~,~\va(\theta^{\pm})=1,   
\]
where, for simplicity, we denote $\theta=\theta^{+}$ and $\theta^{-}=\theta^{-1}$, and similarly for $(R_{21}R)^\pm$.
\end{proposition}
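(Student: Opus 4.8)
The plan is to verify by hand that $(H[\theta,\theta^{-1}],\Delta,\va,\Phi_H)$ is a quasi-bialgebra --- with reassociator the reassociator $\Phi:=\Phi_H\in H\ot H\ot H\subseteq H[\theta,\theta^{-1}]\ot H[\theta,\theta^{-1}]\ot H[\theta,\theta^{-1}]$ of $H$ --- and then to point out that $\theta^{-1}$ is a balancing element. As an algebra $H[\theta,\theta^{-1}]$ is just the Laurent polynomial ring over $H$ with $\theta$ central, so axioms \equref{q3} and \equref{q4} for $H[\theta,\theta^{-1}]$ literally coincide with the ones for $H$ (they involve only $\Phi_H,\Delta_H,\va_H$) and hence hold. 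Thus the real work is: $(a)$ $\Delta$ and $\va$ are well-defined algebra maps; $(b)$ the counit axiom \equref{q2}; $(c)$ the quasi-coassociativity \equref{q1}; $(d)$ the balancing.

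For $(a)$ the only thing to check is that $\Delta(\theta^{\pm1})$ is invertible with the prescribed inverse and commutes with $\Delta(h)=\Delta_H(h)$ for all $h$ (the two defining relations of $H[\theta,\theta^{-1}]$). The key observation is that $R_{21}R$ commutes with $\Delta_H(H)$: \equref{qt3} gives $R\Delta_H(h)=\Delta_H^{\rm cop}(h)R$, and applying the algebra flip $\tau$ to \equref{qt3} gives $\Delta_H(h)R_{21}=R_{21}\Delta_H^{\rm cop}(h)$, whence $R_{21}R\,\Delta_H(h)=R_{21}\Delta_H^{\rm cop}(h)R=\Delta_H(h)\,R_{21}R$. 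Since $\theta^{\pm1}$ is central, $\Delta(\theta^{\pm1})=(R_{21}R)^{\mp1}(\theta^{\pm1}\ot\theta^{\pm1})$ then commutes with $\Delta_H(h)$, and $\Delta(\theta)\Delta(\theta^{-1})=\Delta(\theta^{-1})\Delta(\theta)=1\ot1$ by the same centrality; so $\Delta$ extends uniquely from $\Delta_H$ to an algebra map, and $\va$ extends by $\va(\theta^{\pm1})=1$. For $(b)$, I would use the standard fact (true for any QT quasi-bialgebra) that $(\va\ot\Id)(R)=(\Id\ot\va)(R)=1$; since $\va\ot\Id$ and $\Id\ot\va$ are algebra maps this forces $(\va\ot\Id)(R_{21}R)=(\Id\ot\va)(R_{21}R)=1$, and hence $(\Id\ot\va)\Delta(\theta^{\pm1})=\theta^{\pm1}=(\va\ot\Id)\Delta(\theta^{\pm1})$; together with \equref{q2} for $H$ this gives \equref{q2} for $H[\theta,\theta^{-1}]$.

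Step $(c)$ is the one genuinely nontrivial point. Since both sides of \equref{q1} are multiplicative in the argument it suffices to check it on algebra generators; it holds on $H$ because $H$ is a quasi-bialgebra, and once it holds on $\theta$ it holds on $\theta^{-1}$ by taking inverses. For $a=\theta$, using that $\theta\ot\theta\ot\theta$ is central and that $\Delta\ot\Id,\ \Id\ot\Delta$ are algebra maps, one gets $(\Delta\ot\Id)\Delta(\theta)=\bigl[(R_{21}R\ot1)(\Delta\ot\Id)(R_{21}R)\bigr]^{-1}(\theta\ot\theta\ot\theta)$ and, similarly, $(\Id\ot\Delta)\Delta(\theta)=\bigl[(1\ot R_{21}R)(\Id\ot\Delta)(R_{21}R)\bigr]^{-1}(\theta\ot\theta\ot\theta)$, so \equref{q1} for $\theta$ is equivalent to the identity in $H\ot H\ot H$
\[
(1\ot R_{21}R)(\Id\ot\Delta)(R_{21}R)=\Phi\,(R_{21}R\ot1)(\Delta\ot\Id)(R_{21}R)\,\Phi^{-1}.
\]
I expect proving this to be the main obstacle. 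The clean way to dispose of it is to recognise it as an instance of the coherence already verified in the proof of \prref{balancedfrombraided}: take $\Cc={}_H{\cal M}$ with the braiding $c$ of \equref{braiQT}, whose double braiding $c_{W,V}c_{V,W}$ is, by \equref{braiQT}, exactly the action of $R_{21}R$; for the (legitimate) choice $\eta_V=\Id_V$ for all $V$, the proof of \prref{balancedfrombraided} shows the associativity constraint is a morphism in ${\cal B}({}_H{\cal M},c)$, and evaluating that identity of $H$-module maps at $1\ot1\ot1$ in the regular representation ${}_HH\ot{}_HH\ot{}_HH$ yields precisely the displayed identity. (Alternatively one can grind it out directly from \equref{qt1}, \equref{qt2} and their flips, reconciling with \equref{q3}.)

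This shows $H[\theta,\theta^{-1}]$ is a quasi-bialgebra. For $(d)$, set $\eta:=\theta^{-1}$, which is invertible and central. By the defining formula $\Delta(\theta^{-1})=R_{21}R\,(\theta^{-1}\ot\theta^{-1})$ together with centrality of $\theta^{-1}\ot\theta^{-1}$,
\[
\Delta(\eta)=R_{21}R\,(\theta^{-1}\ot\theta^{-1})=(\theta^{-1}\ot\theta^{-1})\,R_{21}R=(\eta\ot\eta)R_{21}R,
\]
which is exactly \equref{ribbonqHa2}. Hence $(H[\theta,\theta^{-1}],R)$ is a balanced quasi-bialgebra in the sense of \deref{defribbQHA}$(i)$, with balancing element $\theta^{-1}$, completing the proof.
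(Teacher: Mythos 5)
Your proof is correct, and for the one genuinely nontrivial step it takes a different route from the paper. Both arguments reduce quasi-coassociativity on $\theta$ to the same identity $(1\ot R_{21}R)(\Id\ot\Delta)(R_{21}R)=\Phi(R_{21}R\ot 1)(\Delta\ot\Id)(R_{21}R)\Phi^{-1}$ in $H^{\ot 3}$ (the paper, like you, treats everything else as routine and disposes of it in one sentence). The paper then proves this identity by hand: using \equref{qt1} and \equref{qt2} it rewrites it as $\Phi_{132}R_{31}R_{13}\Phi^{-1}_{132}R_{23}\Phi R_{21}=R_{23}\Phi R_{21}\Phi^{-1}_{213}R_{31}R_{13}\Phi_{213}$ and verifies that by a chain of manipulations with the flip map and \equref{qt2}. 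You instead derive it from the coherence already established in \prref{balancedfrombraided}: the associativity constraint of ${}_H{\cal M}$ is a morphism in ${\cal B}({}_H{\cal M},c)$, and specializing to identity automorphisms on the three factors of the regular representation and evaluating at $1\ot 1\ot 1$ yields exactly the required element identity. This is legitimate (there is no circularity, since \prref{balancedfrombraided} precedes the present proposition and its proof is independent of it) and it is more conceptual; the only thing it hides is that the diagrammatic proof of \prref{balancedfrombraided} is carried out ``assuming $\Cc$ strict'', so your argument ultimately leans on Mac Lane strictification to transport that identity to the non-strict category ${}_H{\cal M}$ --- the paper's element-level computation is in effect the self-contained verification of that specialization. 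One further small point in your favour: you correctly identify $\theta^{-1}$ (not $\theta$) as the central element satisfying \equref{ribbonqHa2}; the paper's closing phrase ``balanced via $\theta$'' is slightly loose on this inversion.
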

\begin{proof}
The only thing we have to prove is the quasi-coassociativity of $\Delta$ on $\theta$. It will follow then 
that the natural inclusion of $H$ into $H[\theta, \theta^{-1}]$ is a quasi-bialgebra morphism, and that 
$H[\theta, \theta^{-1}]$ is balanced via $\theta$.   

Now, since $\Delta(h)R_{21}R=R_{21}R\Delta(h)$, for all $h\in H$, the quasi-coassociativity of $\Delta$ on 
$\theta$ reduces to  
\[
\Phi(R_{21}R\ot 1_H)(\Delta_H\ot \Id_H)(R_{21}R)\Phi^{-1}=
(1_H\ot R_{21}R)(\Id_H\ot \Delta_H)(R_{21}R).
\]
This can be restated as 
\[
\Phi(\Delta_H\ot \Id_H)(R_{21}R)R_{21}R_{12}\Phi^{-1}=
R_{32}R_{23}(\Id_H\ot \Delta_H)(R_{21}R).
\]
By using \equref{qt1} and \equref{qt2}, the left hand side of the above equality equals 
\[
\Phi(\Delta(R^2)\ot R^1)(\Delta(r^1)\ot r^2)R_{21}R_{12}\Phi^{-1}=
R_{32}\Phi_{132}R_{31}R_{13}\Phi^{-1}_{132}R_{23}\Phi R_{21}R_{12}\Phi^{-1},
\]
while its right hand side is equal to 
\[
R_{32}R_{23}(R^2\ot \Delta_H(R^1))(r^1\ot \Delta_H(r^2))
=R_{32}R_{23}\Phi R_{21}\Phi^{-1}_{213}R_{31}R_{13}\Phi_{213}R_{12}\Phi^{-1}.
\]
Thus we must show that 
\[
\Phi_{132}R_{31}R_{13}\Phi^{-1}_{132}R_{23}\Phi R_{21}=
R_{23}\Phi R_{21}\Phi^{-1}_{213}R_{31}R_{13}\Phi_{213}.
\]
If $\tau$ is the usual switch for the category of $k$-vector spaces, the latter follows from 
\begin{eqnarray*}
&&\hspace*{-1.5cm}
R_{23}\Phi R_{21}\Phi^{-1}_{213}R_{31}R_{13}\Phi_{213}\\
&=&
(\tau\ot \Id_H)(R_{13}\Phi_{213}R_{12}\Phi^{-1})(\tau\ot \Id_H)(1_H\ot R_{21}R)\Phi_{213}\\
&\equal{\equref{qt2}}&
(\tau\ot \Id_H)(\Phi_{231}(\Id_H\ot \Delta_H)(R)(1_H\ot R_{21}R))\Phi_{231}\\
&=&\Phi_{132}(\tau\ot \Id_H)(1_H\ot R_{21}R)(\tau\ot \Id_H)(\Id_H\ot \Delta(R))\Phi_{213}\\
&\equal{\equref{qt2}}&
\Phi_{132}R_{31}R_{13}(\tau\ot \Id_H)(\Phi^{-1}_{231}R_{13}\Phi_{213}R_{12}\Phi^{-1})\Phi_{213}\\
&=&\Phi_{132}R_{31}R_{13}\Phi^{-1}_{132}R_{23}\Phi R_{21}, 
\end{eqnarray*}
as desired. 
\end{proof}

\begin{proposition}\prlabel{balancedqHacase}
Let $(H, R)$ be a QT quasi-bialgebra and $\Cc={}_H{\cal M}$. Then ${\cal B}(\Cc, c)$ and 
${}_{H[\theta, \theta^{-1}]}{\cal M}$ are isomorphic as balanced categories, where 
$c$ is as in \equref{braiQT} and ${\cal B}(\Cc, c)$ is as in the last part of \reref{balancedFromQuasiHopf}. 
\end{proposition}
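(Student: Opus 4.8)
The plan is to write down an explicit strict isomorphism of balanced categories $F\colon{\cal B}(\Cc,c)\to{}_{H[\theta,\theta^{-1}]}{\cal M}$ and then to check, essentially as a dictionary between the two descriptions, that all the structure matches. On objects, a module over $H[\theta,\theta^{-1}]$ is, by the defining presentation (the algebra is generated by $H$ and $\theta^{\pm1}$ subject only to $h\theta=\theta h$ and $\theta\theta^{-1}=1$), nothing but a left $H$-module $V$ together with an invertible $H$-linear endomorphism of $V$, namely the action of $\theta$; and an object of ${\cal B}(\Cc,c)$ is a pair $(V,\eta_V)$ of exactly this kind. So I would let $F$ send $(V,\eta_V)$ to the $H[\theta,\theta^{-1}]$-module with underlying $H$-module $V$ on which $\theta$ acts as $\eta_V^{-1}$, and be the identity on morphisms. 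Because a $k$-linear map between $H[\theta,\theta^{-1}]$-modules is $H[\theta,\theta^{-1}]$-linear exactly when it is $H$-linear and commutes with the actions of $\theta$ --- that is, exactly when it is a morphism in ${\cal B}(\Cc,c)$ --- the functor $F$ is bijective on objects and fully faithful, hence an isomorphism of categories.

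It then remains to match the monoidal, braided and balanced structures. The underlying categories of ${\cal B}(\Cc,c)$ and ${}_{H[\theta,\theta^{-1}]}{\cal M}$ are both ${}_H{\cal M}$ equipped with one extra operator, and the reassociator of $H[\theta,\theta^{-1}]$ is that of $H$, so the associativity and unit constraints are literally the same on the two sides and $F$ is strictly monoidal as soon as it respects the operator attached to tensor products and to the unit. For the unit this is clear since $\va(\theta^\pm)=1$. For tensor products, the operator on $(V,\eta_V)\otimes(W,\eta_W)$ in ${\cal B}(\Cc,c)$ is $\eta_{V\otimes W}=(\eta_V\otimes\eta_W)c_{W,V}c_{V,W}$, and from \equref{braiQT} a one-line computation shows that $c_{W,V}c_{V,W}$ is the action of the element $R_{21}R\in H\otimes H$; since $\eta_V,\eta_W$ are $H$-linear they commute with that action, so $\eta_{V\otimes W}^{-1}$ is the action of $(R_{21}R)^{-1}(\eta_V^{-1}\otimes\eta_W^{-1})$, which by \prref{ribbquasibialgfromQTones} (where $\Delta(\theta)=(R_{21}R)^{-1}(\theta\otimes\theta)$) is precisely how $\theta$ acts on $F(V,\eta_V)\otimes F(W,\eta_W)$. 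For the braiding, both categories carry the $R$-braiding $c$ of ${}_H{\cal M}$; on the ${\cal B}(\Cc,c)$ side $c$ is a morphism by naturality (\prref{balancedfrombraided}), and transporting that assertion through $F$ says exactly that $c$ is $\theta$-linear, i.e.\ a morphism of $H[\theta,\theta^{-1}]$-modules (equivalently, that $(H[\theta,\theta^{-1}],R)$ is quasitriangular, which follows easily from \equref{qt3}). Finally, the twist of ${\cal B}(\Cc,c)$ is $(\eta_V)_V$, while the balancing of ${}_{H[\theta,\theta^{-1}]}{\cal M}$ is the action of the central element realising \equref{ribbonqHa2} --- namely $\theta^{-1}$, since $\Delta(\theta^{-1})=(\theta^{-1}\otimes\theta^{-1})R_{21}R$ --- and on $F(V,\eta_V)$ this action is $\eta_V$; hence $F$ preserves the twist.

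There is no deep step: once one grants \prref{ribbquasibialgfromQTones} (where the quasi-coassociativity of $\Delta$ on $\theta$ is the real work) and \prref{balancedfrombraided} (which already supplies the balanced structure on ${\cal B}(\Cc,c)$), the argument is a translation. The only points where I would slow down are the bookkeeping of $\theta$ versus $\theta^{-1}$ and of $R_{21}R$ versus its inverse, so that the twists on the two sides genuinely coincide, and the remark that the braided structure on ${}_{H[\theta,\theta^{-1}]}{\cal M}$, which a priori needs the quasitriangular axioms \equref{qt1}--\equref{qt3} even to exist, is the one coming from $\Cc$; both are taken care of by facts already recorded in the preliminaries.
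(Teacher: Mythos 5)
Your proposal is correct and follows exactly the paper's own (very terse) proof: the paper simply declares the functor sending $(V,\eta_V)$ to $V$ with $\theta^{\pm}$ acting as $\eta_V^{\mp}$ and the identity on morphisms, leaving all verifications implicit. Your added checks --- that $c_{W,V}c_{V,W}$ is the action of $R_{21}R$ so the tensor operators match, that $\theta^{-1}$ is the central element realising \equref{ribbonqHa2}, and that $R$ remains an $R$-matrix for $H[\theta,\theta^{-1}]$ --- are all accurate bookkeeping of the same argument.
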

\begin{proof}
Indeed, the desired isomorphism is produced by the following correspondence. To $(V, \eta_V)$ 
in ${\cal B}(\Cc, c)$ we associate $V$ regarded as a left $H[\theta, \theta^{-1}]$-module via 
the $H$-module structure of it and $\theta^\pm\cdot v=\eta_V^\mp(v)$, for all $v\in V$. In this way a morphism 
in ${\cal B}(\Cc, c)$ becomes a morphism in ${}_{H[\theta, \theta^{-1}]}{\cal M}$.     
\end{proof}

\section{Ribbon categories and ribbon quasi-Hopf algebras}\selabel{ribboncategqHa}
\setcounter{equation}{0}
To any left rigid braided category $(\Cc, c)$ (which is consequently rigid) we assign a 
ribbon category ${\cal R}(\Cc, c)$. Thus, to any left (resp. right) rigid monoidal category $\Cc$ we 
can associate a ribbon monoidal one, that will be denoted by ${\mf R}_l(\Cc)$ 
(resp. ${\mf R}_r(\Cc)$). The latter are possible due to the left and right center constructions and coincide to the 
ones defined in \cite{kasturaev}. Our construction allows to associate to any QT quasi-Hopf algebra a ribbon one, and suggests also 
how to construct a class of ribbon quasi-Hopf algebras. 
\subsection{Ribbon categories obtained from rigid monoidal categories}\sselabel{ribbfrombraidandmonoid}
Inspired by the formula in \equref{msquaretwist} we introduce the following category, that will turn out to 
be a ribbon category. 

\begin{definition}\delabel{ribbfrombraidwithld}
If $(\Cc, c)$ is a left rigid braided (strict) monoidal category 
then ${\cal R}(\Cc, c)$ is the category whose 

$\bullet$ objects are pairs $(V, \eta_V)$ consisting of an object $V$ of $\Cc$ and an 
automorphism $\eta_V$ of $V$ in $\Cc$ satisfying
\begin{equation}\eqlabel{etamorgen}
\eta_V^{-2}=
{{\footnotesize
\gbeg{3}{6}
\gvac{2}\got{1}{V}\gnl
\gdb\gcl{3}\gnl
\gbr\gnl
\gbr\gnl
\gcl{1}\gev\gnl
\gob{1}{V}
\gend
}}
=
{{\footnotesize
\gbeg{3}{6}
\gvac{2}\got{1}{V}\gnl
\gdb\gcl{2}\gnl
\gbr\gnl
\gcl{1}\gibr\gnl
\gev\gcl{1}\gnl
\gvac{2}\gob{1}{V}
\gend
}}~;
\end{equation}

$\bullet$ morphisms $f: (V, \eta_V)\ra (W, \eta_W)$ are morphisms $f: V\ra W$ in $\Cc$ such that 
$\eta_Wf=f\eta_V$. 

The composition in ${\cal R}(\Cc)$ is given by the composition in $\Cc$, and 
the identity morphism of an object $(V, \eta_V)$ is $\Id_V$. 
\end{definition}

Otherwise stated, ${\cal R}(\Cc, c)$ is the full subcategory of ${\cal B}(\Cc, c)$ 
considered in \prref{balancedfrombraided} determined by those objects 
$(V, \eta_V)$ of ${\cal B}(\Cc, c)$ for which $\eta_V$ obeys \equref{etamorgen}. As we pointed out in 
the second part of \prref{balancedfaarfromribbon}, this is the necessary and sufficient condition that 
turns ${\cal B}(\Cc, c)$ into a ribbon category. Actually, the ribbon structure of ${\cal R}(\Cc, c)$ is encoded in 
the following result. 

\begin{theorem}\thlabel{ribbonfrombraided}
Let $(\Cc, c)$ be a left rigid braided (strict) monoidal category. Then ${\cal R}(\Cc, c)$ 
is a ribbon monoidal category as follows:  

$\bullet$ If $(V, \eta_V)$, $(W, \eta_W)\in {\cal R}(\Cc, c)$ 
then $(V, \eta_V)\ot (W, \eta_W)=(V\ot W, \eta_{V\ot W})$, where 
\begin{equation}
\eta_{V\ot W}=(\eta_V\ot\eta_W)c_{W,V}c_{V,W};\eqlabel{etamonoidalbr}
\end{equation} 

$\bullet$ The unit object is $(\un{1}, \eta_{\un{1}}=\Id_{\un{1}})$, and the 
associativity and the left and right unit constraints are the same as those of $\Cc$;

$\bullet$ The braiding equals $c$, regarded as an isomorphism in ${\cal R}(\Cc, c)$; 

$\bullet$ For $(V, \eta_V)$ an object in ${\cal R}(\Cc, c)$, a left dual object for it is $(V^*, \eta_{V^*})$, where
\begin{equation}
\eta_{V^*}=(\eta_V)^*,\eqlabel{twistdualobjribbbr}
\end{equation}
with evaluation and coevaluation morphisms equal to ${\rm ev}_V$ and ${\rm coev}_V$, viewed now as morphisms in ${\cal R}(\Cc, c)$; 

$\bullet$ The twist is given by 
\begin{equation}
\eta_V: (V, \eta_V)\ra (V, \eta_V),\eqlabel{twistribbonconstrbr}
\end{equation} 
an automorphism in ${\cal R}(\Cc, c)$. 
\end{theorem}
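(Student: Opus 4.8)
The plan is to verify directly that each piece of structure claimed in the statement is well defined on ${\cal R}(\Cc, c)$ and satisfies the ribbon axioms, leaning heavily on what has already been established for ${\cal B}(\Cc, c)$ in \prref{balancedfrombraided} and on the characterization in \prref{balancedfaarfromribbon}. First I would observe that ${\cal R}(\Cc, c)$ is by construction the full subcategory of ${\cal B}(\Cc, c)$ cut out by condition \equref{etamorgen}; hence once we know the tensor product, unit, constraints, braiding and twist of ${\cal B}(\Cc, c)$ restrict to ${\cal R}(\Cc, c)$, the balanced-category axioms come for free. So the real content is: (a) the tensor product \equref{etamonoidalbr} of two objects of ${\cal R}(\Cc, c)$ is again in ${\cal R}(\Cc, c)$, i.e. $\eta_{V\ot W}$ again satisfies \equref{etamorgen}; (b) the pair $(V^*, (\eta_V)^*)$ is an object of ${\cal R}(\Cc, c)$, i.e. $(\eta_V)^*$ satisfies \equref{etamorgen} with $V$ replaced by $V^*$, and $({\rm ev}_V, {\rm coev}_V)$ are morphisms in ${\cal R}(\Cc, c)$ exhibiting it as a left dual; and (c) the resulting left duality together with the twist verifies the ribbon compatibility \equref{ribboncateg2}, which for us is the definitional requirement $\eta_{V^*}=(\eta_V)^*$ and so holds tautologically by \equref{twistdualobjribbbr}.

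For step (a), the clean route is to invoke \prref{balancedfaarfromribbon}: on the balanced category ${\cal B}(\Cc, c)$ the square of the twist of any object is forced by the left rigid braided structure via \equref{msquaretwist}, and \equref{etamorgen} is exactly the assertion $\eta_V^{-2}=({\rm ev}_V\ot \Id_V)(\Id_{V^*}\ot c^{-1}_{V,V})(c_{V,V^*}{\rm coev}_V\ot \Id_V)$. Since ${\cal B}(\Cc, c)$ is genuinely balanced (as a braided category with the chosen $\eta$), its twist automatically satisfies $\eta^{-2}_{V\ot W}=({\rm ev}_{V\ot W}\ot\Id)(\Id\ot c^{-1})(c\,{\rm coev}_{V\ot W}\ot\Id)$ computed with the left duals of the ambient rigid category $\Cc$ — that is, the square of the twist is a property of the underlying rigid braided category and is stable under $\ot$. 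Concretely I would write out $\eta_{V\ot W}^{-2}$ using \equref{etamonoidalbr}, the formula \equref{invofetadualbases} for $\eta^{-1}$, and the fact that ${\rm coev}_{V\ot W}$, ${\rm ev}_{V\ot W}$ decompose through $V$, $V^*$, $W$, $W^*$ (as recorded in \sseref{moncateg}), then push braidings around by naturality and the Yang–Baxter equation \equref{qybe} until \equref{etamorgen} for $V\ot W$ emerges; this is a diagrammatic chase entirely analogous to the associativity check in \prref{balancedfrombraided}. For step (b), the transpose $(\eta_V)^*$ is an automorphism of $V^*$; applying $(-)^*$ to \equref{etamorgen} for $V$ and using that $(-)^*$ is strong monoidal together with the standard duality identities for $c_{V,V^*}$, $c_{V,V}$ and their transposes yields precisely \equref{etamorgen} for $V^*$. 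That $({\rm ev}_V,{\rm coev}_V)$ are morphisms in ${\cal R}(\Cc, c)$, i.e. intertwine the relevant $\eta$'s, is the relation $\eta_{\un 1}{\rm ev}_V={\rm ev}_V\,\eta_{V^*\ot V}$ and $\eta_{V\ot V^*}{\rm coev}_V={\rm coev}_V\,\eta_{\un 1}$, which after inserting \equref{etamonoidalbr} reduces — via naturality of $c$ and the zig-zag identities \equref{rrigid} — to identities already available; alternatively one may cite that in a balanced category the twist is automatically compatible with the canonical left duality.

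I expect the main obstacle to be step (a): showing that $\eta_{V\ot W}$ as defined by \equref{etamonoidalbr} again satisfies the self-referential condition \equref{etamorgen}. The difficulty is that \equref{etamorgen} is not manifestly multiplicative — $\eta_{V\ot W}^{-2}$ must be reorganized into the single "double-braiding through the dual" picture for $V\ot W$, which requires several applications of \equref{qybe} and of naturality of $c$ with respect to $\eta_V$, $\eta_W$, plus careful bookkeeping of which strand the coevaluation/evaluation of $V\ot W$ splits into. Once this identity is in hand everything else is either inherited from \prref{balancedfrombraided} or is a short duality computation, and the ribbon condition \equref{ribboncateg2} holds by the very definition \equref{twistdualobjribbbr} of $\eta_{V^*}$. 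I would therefore present the proof as: first recall from \prref{balancedfrombraided} that $({\cal R}(\Cc, c),\ot,\un 1,c,\eta)$ is a braided category with a twist (everything restricting from ${\cal B}(\Cc, c)$, since \equref{etamorgen} is stable under the operations — with (a) as the one nontrivial stability check), then note it is left rigid via $(V^*,(\eta_V)^*)$ by step (b), and finally invoke \prref{balancedfaarfromribbon} (second part) to conclude that \equref{etamorgen}, which holds for every object by fiat, is exactly the condition \equref{msquaretwist} guaranteeing that this left rigid balanced category is ribbon, with ribbon twist $\eta$.
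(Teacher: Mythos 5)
Your proposal is correct and takes essentially the same route as the paper: all of the real work is the diagrammatic verification that \equref{etamorgen} is preserved by the tensor product \equref{etamonoidalbr} and by passage to $(V^*,(\eta_V)^*)$, together with the check that ${\rm ev}_V$ and ${\rm coev}_V$ intertwine the twists, all carried out by naturality of $c$ and Yang--Baxter manipulations exactly as you outline, with the ribbon axiom \equref{ribboncateg2} then holding by fiat from \equref{twistdualobjribbbr}. The one caveat is that your two proposed shortcuts do not actually work --- \prref{balancedfaarfromribbon} cannot be invoked here because the $\eta$ of ${\cal B}(\Cc, c)$ is not a natural transformation of $\Id_\Cc$ (and $\eta_{V^*}$ is not even defined for a general object of ${\cal B}(\Cc, c)$), while ``the twist is automatically compatible with the canonical left duality in a balanced category'' is precisely the ribbon condition rather than an automatic fact --- so the explicit computations you fall back on are genuinely required, and they are exactly what the paper carries out.
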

\begin{proof}
We start by proving that $(V\ot W, \eta_{V\ot W})$ is an object of ${\cal R}(\Cc, c)$, i.e. $\eta_{V\ot W}$ 
in \equref{etamonoidalbr} obeys \equref{etamorgen}. 
To this end, we need the equalities
\begin{equation}\eqlabel{natbraidribb1}
(a)~~{{\footnotesize
\gbeg{4}{6}
\got{1}{V}\got{1}{W}\got{1}{X}\got{1}{Y}\gnl
\gcl{1}\gcl{1}\gcl{1}\gcl{1}\gnl
\gcl{1}\gcl{1}\gsbox{2}\gnot{\hspace*{5mm}f}\gnl
\gcl{1}\gbr\gnl
\gbr\gcl{1}\gnl
\gob{1}{Z}\gob{1}{V}\gob{1}{W}
\gend}}=
{{\footnotesize
\gbeg{4}{7}
\got{1}{V}\got{1}{W}\got{1}{X}\got{1}{Y}\gnl
\gcl{1}\gbr\gcl{1}\gnl
\gbr\gbr\gnl
\gcl{1}\gbr\gcl{3}\gnl
\gsbox{2}\gnot{\hspace*{5mm}f}\gvac{2}\gcl{2}\gnl
\gcn{1}{1}{2}{2}\gnl
\gob{2}{Z}\gob{1}{V}\gob{1}{W}
\gend
}}~,~(b)~~
{{\footnotesize
\gbeg{4}{6}
\got{1}{V}\gvac{1}\got{1}{X}\gnl
\gcl{1}\gvac{1}\gcn{1}{1}{1}{1}\gnl
\gcl{1}\gsbox{3}\gnot{\hspace*{1cm}g}\gnl
\gbr\gcl{1}\gcl{1}\gnl
\gcl{1}\gbr\gcl{1}\gnl
\gob{1}{Y}\gob{1}{Z}\gob{1}{V}\gob{1}{T}
\gend
}}=
{{\footnotesize
\gbeg{4}{5}
\gvac{2}\got{1}{V}\got{1}{X}\gnl
\gvac{2}\gbr\gnl
\gsbox{3}\gnot{\hspace*{1cm}g}\gvac{3}\gcl{1}\gnl
\gcl{1}\gcl{1}\gibr\gnl
\gob{1}{Y}\gob{1}{Z}\gob{1}{V}\gob{1}{T}
\gend
}},
\end{equation}
for any morphisms $f: X\ot Y\ra Z$ and $g: X\ra Y\ot Z\ot T$ in $\Cc$, and respectively
\begin{equation}\eqlabel{natbraidribb2}
{{\footnotesize
\gbeg{3}{5}
\got{1}{X}\got{1}{Y}\got{1}{Z}\gnl
\gibr\gcl{1}\gnl
\gcl{1}\gibr\gnl
\gibr\gcl{1}\gnl
\gob{1}{Z}\gob{1}{Y}\gob{1}{X}
\gend
}}=
{{\footnotesize
\gbeg{3}{5}
\got{1}{X}\got{1}{Y}\got{1}{Z}\gnl
\gcl{1}\gibr\gnl
\gibr\gcl{1}\gnl
\gcl{1}\gibr\gnl
\gob{1}{Z}\gob{1}{Y}\gob{1}{X}
\gend}},
\end{equation}
valid for all $X, Y, Z\in \Cc$, which follow from the fact that $c_{-,-}$  is a natural isomorphism. 
Note that \equref{natbraidribb2} is nothing but an equivalent form of the categorical version of the 
Yang-Baxter equation \equref{qybe}.  

Now, if $\l_{V, W}: (V\ot W)^*\ra W^*\ot V^*$ is the isomorphism in $\Cc$ defined in \equref{firstisomdual} and 
$\l^{-1}_{V, W}$ is its inverse, then we compute
\begin{eqnarray*}
&&\hspace*{-2mm}
{{\footnotesize
\gbeg{5}{10}
\gvac{3}\got{1}{V}\got{1}{W}\gnl
\gwdb{3}\gcl{3}\gcl{4}\gnl
\gsbox{2}\gnot{\hspace*{5mm}V\ot W}\gvac{2}\gcl{1}\gnl
\gcl{1}\gbr\gnl
\gbr\gibr\gnl
\gcl{1}\gibr\gibr\gnl
\gcl{1}\gcl{1}\gibr\gcl{1}\gnl
\gcl{1}\gsbox{2}\gnot{\hspace*{5mm}V\ot W}\gvac{2}\gcl{2}\gcl{2}\gnl
\gwev{3}\gnl
\gvac{3}\gob{1}{V}\gob{1}{W}
\gend
}}=
{{\footnotesize
\gbeg{7}{10}
\gvac{5}\got{1}{V}\got{1}{W}\gnl
\gvac{1}\gwdb{4}\gcl{3}\gcl{3}\gnl
\gvac{1}\gcl{1}\gdb\gcl{1}\gnl
\gvac{1}\gcl{1}\gcl{1}\gsbox{2}\gnot{\hspace*{5mm}\l^{-1}_{V, W}}\gnl
\gvac{1}\gcl{1}\gbr\gcn{1}{1}{3}{1}\gcn{1}{2}{3}{1}\gnl
\gvac{1}\gbr\gibr\gnl
\gsbox{2}\gnot{\hspace*{5mm}\l_{V, W}}\gvac{2}\gibr\gibr\gnl
\gcl{1}\gev\gibr\gcl{2}\gnl
\gwev{4}\gcl{1}\gnl
\gvac{4}\gob{1}{V}\gob{1}{W}
\gend}}
\equal{\equuref{natbraidribb1}{a}}
{{\footnotesize
\gbeg{6}{10}
\gvac{4}\got{1}{V}\got{1}{W}\gnl
\gwdb{4}\gcl{4}\gcl{4}\gnl
\gcl{1}\gdb\gcl{1}\gnl
\gcl{1}\gbr\gcl{1}\gnl
\gbr\gbr\gnl
\gcl{1}\gbr\gibr\gcl{1}\gnl
\gcl{1}\gcl{1}\gibr\gibr\gnl
\gcl{1}\gev\gibr\gcl{1}\gnl
\gwev{4}\gcl{1}\gcl{1}\gnl
\gvac{4}\gob{1}{V}\gob{1}{W}
\gend}}\\
&&
\equal{\equuref{natbraidribb1}{b}}
{{\footnotesize
\gbeg{6}{10}
\gvac{4}\got{1}{V}\got{1}{W}\gnl
\gdb\gdb\gcl{1}\gcl{1}\gnl
\gbr\gbr\gcl{1}\gcl{1}\gnl
\gcl{1}\gbr\gcl{1}\gcl{1}\gcl{1}\gnl
\gcl{1}\gcl{1}\gibr\gcl{1}\gcl{1}\gnl
\gcl{1}\gcl{1}\gcl{1}\gibr\gcl{1}\gnl
\gcl{1}\gcl{1}\gibr\gibr\gnl
\gcl{1}\gev\gibr\gcl{1}\gnl
\gwev{4}\gcl{1}\gcl{1}\gnl
\gvac{4}\gob{1}{V}\gob{1}{W}
\gend
}}
\equal{\equref{natbraidribb2}}
{{\footnotesize
\gbeg{6}{10}
\gvac{4}\got{1}{V}\got{1}{W}\gnl
\gdb\gdb\gcl{1}\gcl{1}\gnl
\gbr\gbr\gcl{1}\gcl{1}\gnl
\gcl{1}\gbr\gibr\gcl{1}\gnl
\gcl{1}\gcl{1}\gibr\gcl{1}\gcl{1}\gnl
\gcl{1}\gcl{1}\gcl{1}\gibr\gcl{1}\gnl
\gcl{1}\gev\gcl{1}\gibr\gnl
\gcl{1}\gvac{2}\gibr\gcl{2}\gnl
\gwev{4}\gcl{1}\gnl
\gvac{4}\gob{1}{V}\gob{1}{W}
\gend}}
\equalupdown{\equuref{natbraidribb1}{b}}{\equref{natbraidribb2}}
{{\footnotesize
\gbeg{6}{9}
\gvac{2}\got{1}{V}\gvac{2}\got{1}{W}\gnl
\gdb\gcl{1}\gdb\gcl{1}\gnl
\gbr\gcl{1}\gbr\gcl{1}\gnl
\gcl{1}\gibr\gcl{1}\gibr\gnl
\gcl{1}\gcl{1}\gbr\gcl{1}\gcl{1}\gnl
\gcl{1}\gbr\gibr\gcl{1}\gnl
\gcl{1}\gev\gcl{1}\gibr\gnl
\gwev{4}\gcl{1}\gcl{1}\gnl
\gvac{4}\gob{1}{V}\gob{1}{W}
\gend
}}\\
&&
\equalupdown{(*)}{\equref{etamorgen}}
{{\footnotesize
\gbeg{4}{10}
\gvac{2}\got{1}{V}\got{1}{W}\gnl
\gdb\gcl{1}\gcl{1}\gnl
\gbr\gcl{1}\gcl{1}\gnl
\gcl{1}\gibr\gcl{1}\gnl
\gcl{1}\gmp{\eta^{-2}_V}\gcl{1}\gcl{1}\gnl
\gcl{1}\gibr\gcl{1}\gnl
\gcl{1}\gcl{1}\gibr\gnl
\gcl{1}\gibr\gcl{1}\gnl
\gev\gibr\gnl
\gvac{2}\gob{1}{V}\gob{1}{W}
\gend
}}
\equal{\equref{natbraidribb2}}
{{\footnotesize
\gbeg{4}{10}
\gvac{2}\got{1}{V}\got{1}{W}\gnl
\gdb\gcl{1}\gcl{1}\gnl
\gbr\gcl{1}\gcl{1}\gnl
\gcl{1}\gibr\gcl{1}\gnl
\gcl{1}\gmp{\eta^{-2}_V}\gcl{1}\gcl{1}\gnl
\gcl{1}\gcl{1}\gibr\gnl
\gcl{1}\gibr\gcl{1}\gnl
\gcl{1}\gcl{1}\gibr\gnl
\gev\gibr\gnl
\gvac{2}\gob{1}{V}\gob{1}{W}
\gend}}
\equal{\equref{natbraidribb2}}
{{\footnotesize
\gbeg{4}{9}
\gvac{2}\got{1}{V}\got{1}{W}\gnl
\gdb\gcl{1}\gcl{1}\gnl
\gbr\gmp{\eta_V^{-2}}\gcl{1}\gnl
\gcl{1}\gcl{1}\gibr\gnl
\gcl{1}\gibr\gcl{1}\gnl
\gev\gibr\gnl
\gvac{2}\gibr\gnl
\gvac{2}\gibr\gnl
\gvac{2}\gob{1}{V}\gob{1}{W}
\gend}}
\equal{\equref{etamorgen}}
{{\footnotesize
\gbeg{2}{8}
\got{1}{V}\got{1}{W}\gnl
\gmp{\eta^{-1}_V}\gmp{\eta_W^{-1}}\gnl
\gibr\gnl
\gibr\gnl
\gmp{\eta^{-1}_V}\gmp{\eta_W^{-1}}\gnl
\gibr\gnl
\gibr\gnl
\gob{1}{V}\gob{1}{W}
\gend}}
=\eta^{-2}_{V\ot W},
\end{eqnarray*}
where (*) is 
\[
{{\footnotesize
\gbeg{4}{6}
\got{1}{V}\got{1}{W}\got{2}{X}\gnl
\gcl{1}\gcl{1}\gcn{1}{1}{2}{2}\gnl
\gcl{1}\gcl{1}\gsbox{2}\gnot{\hspace*{5mm}h}\gnl
\gcl{1}\gbr\gcl{2}\gnl
\gbr\gcl{1}\gnl
\gob{1}{Y}\gob{1}{Z}\gob{1}{V}\gob{1}{W}
\gend}}
=
{{\footnotesize
\gbeg{4}{7}
\gvac{1}\got{1}{V}\got{1}{W}\got{1}{X}\gnl
\gvac{1}\gcl{1}\gbr\gnl
\gvac{1}\gbr\gcl{1}\gnl
\gsbox{2}\gnot{\hspace*{5mm}h}\gvac{2}\gcl{1}\gcl{2}\gnl
\gcl{1}\gibr\gnl
\gcl{1}\gcl{1}\gibr\gnl
\gob{1}{Y}\gob{1}{Z}\gob{1}{V}\gob{1}{W}
\gend
}}~~\mbox{for}~~
h=
{{\footnotesize
\gbeg{2}{4}
\got{2}{\un{1}}\gnl
\gdb\gnl
\gbr\gnl
\gob{1}{W^*}\gob{1}{W}
\gend}}~,~i.e.~
{{\footnotesize
\gbeg{4}{6}
\got{1}{V}\got{1}{W}\gnl
\gcl{1}\gcl{1}\gdb\gnl
\gcl{1}\gcl{1}\gbr\gnl
\gcl{1}\gbr\gcl{2}\gnl
\gbr\gcl{1}\gnl
\gob{1}{W^*}\gob{1}{V}\gob{1}{W}\gob{1}{W}
\gend}}
=
{{\footnotesize
\gbeg{4}{6}
\gvac{2}\got{1}{V}\got{1}{W}\gnl
\gdb\gcl{1}\gcl{1}\gnl
\gbr\gcl{1}\gcl{2}\gnl
\gcl{1}\gibr\gnl
\gcl{1}\gcl{1}\gibr\gnl
\gob{1}{W^*}\gob{1}{V}\gob{1}{W}\gob{1}{W}
\gend
}}.
\]
This ends the proof of the fact that the tensor product of ${\cal R}(\Cc, c)$ is well defined at the level of objects. 
It is easy to see that for any two morphisms $f, f'$ in ${\cal R}(\Cc, c)$ 
their tensor product $f\ot f'$ in $\Cc$ is actually a morphism in ${\cal R}(\Cc, c)$.  
Therefore, we have a tensor product functor on 
${\cal R}(\Cc, c)$ that together with the associativity and the left and right unit constraints of $\Cc$ defines on 
${\cal R}(\Cc, c)$ a monoidal structure; the unchecked details are left to the reader.

We next show that $c$ provides a braiding on ${\cal R}(\Cc, c)$.  
The only thing that must be verified is that $c_{V, W}$ is a 
morphism in ${\cal R}(\Cc, c)$, for any objects $(V, \eta_V)$,  $(W, \eta_W)$ of ${\cal R}(\Cc, c)$. This follows directly 
from the definitions and from the naturality of $c$.

Let $(V^*, \eta_{V^*})$ be as in \equref{twistdualobjribbbr}. 
By using the naturality of $c$ one sees that   
\begin{eqnarray}
&&\hspace*{-7mm}
{{\footnotesize
\gbeg{3}{7}
\got{1}{V}\gnl
\gmp{\eta_V}\gdb\gnl
\gcl{1}\gbr\gnl
\gbr\gmp{\eta_V}\gnl
\gcl{1}\gcl{1}\gcl{1}\gnl
\gev\gcl{1}\gnl
\gvac{2}\gob{1}{V}
\gend
}}
=
{{\footnotesize
\gbeg{3}{7}
\got{1}{V}\gnl
\gcl{1}\gdb\gnl
\gcl{1}\gmp{\eta_V}\gcl{1}\gnl
\gmp{\eta_V}\gbr\gnl
\gbr\gcl{1}\gnl
\gev\gcl{1}\gnl
\gvac{2}\gob{1}{V}
\gend
}}
=
{{\footnotesize
\gbeg{3}{7}
\gvac{2}\got{1}{V}\gnl
\gdb\gcl{1}\gnl
\gmp{\eta_V}\gcl{1}\gmp{\eta_V}\gnl
\gbr\gcl{1}\gnl
\gcl{1}\gibr\gnl
\gev\gcl{1}\gnl
\gvac{2}\gob{1}{V}
\gend
}}
=
{{\footnotesize
\gbeg{3}{6}
\gvac{2}\got{1}{V}\gnl
\gdb\gmp{\eta_V}\gnl
\gbr\gcl{1}\gnl
\gcl{1}\gibr\gnl
\gev\gmp{\eta_V}\gnl
\gvac{2}\gob{1}{V}
\gend
}}
=
\Id_V,\eqlabel{rightdualribbon}\\
&&\hspace*{-7mm}
\eta^{-2}_V=
{{\footnotesize
\gbeg{3}{6}
\gvac{2}\got{1}{V}\gnl
\gdb\gmp{\eta^{-2}_V}\gnl
\gcl{1}\gibr\gnl
\gcl{1}\gbr\gnl
\gcl{1}\gev\gnl
\gob{1}{V}
\gend
}}
=
{{\footnotesize
\gbeg{3}{7}
\gvac{2}\got{1}{V}\gnl
\gdb\gcl{1}\gnl
\gcl{1}\gibr\gnl
\gcl{1}\gmp{\eta^{-2}_V}\gcl{1}\gnl
\gcl{1}\gbr\gnl
\gcl{1}\gev\gnl
\gob{1}{V}
\gend
}}
\equal{\equref{etamorgen}}
{{\footnotesize
\gbeg{5}{8}
\gvac{4}\got{1}{V}\gnl
\gwdb{4}\gcl{1}\gnl
\gcl{1}\gdb\gibr\gnl
\gcl{1}\gbr\gcl{1}\gcl{2}\gnl
\gcl{1}\gcl{1}\gibr\gnl
\gcl{1}\gev\gbr\gnl
\gcl{1}\gvac{2}\gev\gnl
\gob{1}{V}
\gend
}}
=
{{\footnotesize
\gbeg{5}{8}
\gvac{4}\got{1}{V}\gnl
\gwdb{4}\gcl{1}\gnl
\gcl{1}\gdb\gcl{2}\gcl{4}\gnl
\gcl{1}\gbr\gnl
\gcl{1}\gcl{1}\gbr\gnl
\gcl{1}\gcl{1}\gev\gnl
\gcl{1}\gwev{4}\gnl
\gob{1}{V}
\gend
}},\eqlabel{impformribbonduality}
\end{eqnarray}
where in the last equality of the second computation we applied the naturality of $c^{-1}_{V,-}$ 
to the morphism ${\rm ev}_Vc_{V, V^*}: V\ot V^*\ra \un{1}$. 
These formulas allow to prove that 
\begin{eqnarray*}
{{\footnotesize
\gbeg{7}{8}
\gvac{6}\got{1}{V^*}\gnl
\gdb\gnot{\hspace*{-1.2cm}V^*}\gwdb{4}\gnot{\hspace*{-2.2cm}V}\gcl{1}\gnl
\gcl{1}\gibr\gdb\gnot{\hspace*{-1.2cm}}\gcl{1}\gcl{4}\gnl
\gev\gnot{\hspace*{-1.2cm}V}\gcl{1}\gbr\gcl{1}\gnl
\gvac{2}\gcl{1}\gcl{1}\gmp{\eta^2_V}\gcl{1}\gnl
\gvac{2}\gev\gnot{\hspace*{-1.2cm}V^*}\gbr\gnl
\gvac{4}\gcl{1}\gev\gnot{\hspace*{-1.2cm}V}\gnl
\gvac{4}\gob{1}{V^*}
\gend}}
&=&
{{\footnotesize
\gbeg{7}{8}
\gvac{6}\got{1}{V^*}\gnl
\gdb\gnot{\hspace*{-1.2cm}V^*}\gwdb{4}\gnot{\hspace*{-2.2cm}V}\gcl{1}\gnl
\gcl{1}\gcl{1}\gcl{1}\gdb\gnot{\hspace*{-1.2cm}V}\gcl{1}\gcl{1}\gnl
\gcl{1}\gcl{1}\gcl{1}\gbr\gcl{2}\gcl{3}\gnl
\gcl{1}\gcl{1}\gbr\gmp{\eta^2_V}\gnl
\gcl{1}\gev\gnot{\hspace*{-1.2cm}V^*}\gcl{1}\gbr\gnl
\gwev{4}\gnot{\hspace*{-2.2cm}V}\gcl{1}\gev\gnot{\hspace*{-1.2cm}V}\gnl
\gvac{4}\gob{1}{V^*}
\gend
}}
=
{{\footnotesize
\gbeg{5}{9}
\gvac{4}\got{1}{V^*}\gnl
\gwdb{4}\gnot{\hspace*{-2.2cm}V}\gcl{1}\gnl
\gcl{1}\gdb\gnot{\hspace*{-1.2cm}V}\gcl{3}\gcl{4}\gnl
\gcl{1}\gbr\gnl
\gcl{1}\gcl{1}\gmp{\eta^2_V}\gnl
\gbr\gbr\gnl
\gev\gnot{\hspace*{-1.2cm}V}\gcl{2}\gbr\gnl
\gvac{3}\gev\gnot{\hspace*{-1.2cm}V}\gnl
\gvac{2}\gob{1}{V^*}
\gend
}}\\
&=&
{{\footnotesize
\gbeg{3}{6}
\gvac{2}\got{1}{V^*}\gnl
\gdb\gnot{\hspace*{-1.2cm}V}\gcl{2}\gnl
\gbr\gnl
\gcl{1}\gbr\gnl
\gcl{1}\gev\gnot{\hspace*{-1.2cm}V}\gnl
\gob{1}{V^*}
\gend
}}
=
{{\footnotesize
\gbeg{3}{5}
\got{1}{V^*}\gnl
\gcl{1}\gdb\gnl
\gcl{1}\gmp{\eta^{-2}_V}\gcl{2}\gnl
\gev\gnl
\gvac{2}\gob{1}{V^*}
\gend
}}=(\eta^{-2}_V)^*=(\eta_{V^*})^{-2}.
\end{eqnarray*}
We used in the first equality an equivalent form of the naturality of $c^{-1}_{V,-}$ applied to ${\rm ev}_{V^*}$, 
the relation \equref{rrigid} in the second equality, \equref{rightdualribbon} in the third equality and in the fourth equality 
\equref{rrigid} and \equref{impformribbonduality}, respectively. In other words, the relation in \equref{etamorgen} 
is satisfied by $\eta_{V^*}$, and thus $(V^*, \eta_{V^*})$ is an object of ${\cal R}(\Cc, c)$. 
That it is a left dual of $(V, \eta_V)$ in ${\cal R}(\Cc, c)$ reduces to the fact that ${\rm ev}_V$ and ${\rm coev}_V$ are morphisms 
in ${\cal R}(\Cc, c)$. Towards this end, we need the equivalence  
\begin{equation}\eqlabel{rdualbasesribbon}
{{\footnotesize
\gbeg{4}{9}
\gvac{2}\got{1}{W^*}\got{1}{V}\gnl
\gdb\gcl{3}\gcl{5}\gnl
\gbr\gnl
\gcl{1}\gmp{\eta^2_V}\gnl
\gcl{1}\gbr\gnl
\gcl{1}\gcl{1}\gmp{f}\gcl{1}\gnl
\gcl{1}\gev\gcl{1}\gnl
\gwev{4}\gnl
\gvac{1}\gob{2}{\un{1}}
\gend}}
=
{{\footnotesize
\gbeg{4}{10}
\gvac{2}\got{1}{W^*}\got{1}{V}\gnl
\gdb\gcl{2}\gcl{2}\gnl
\gbr\gnl
\gcl{1}\gcl{1}\gibr\gnl
\gcl{1}\gibr\gcl{1}\gnl
\gev\gmp{\eta^2_V}\gcl{1}\gnl
\gvac{2}\gbr\gnl
\gvac{2}\gcl{1}\gmp{f}\gnl
\gvac{2}\gev\gnl
\gvac{1}\gob{2}{\un{1}}\gnl
\gend
}}
\equal{\equref{etamorgen}}
{{\footnotesize
\gbeg{2}{4}
\got{1}{W^*}\got{1}{V}\gnl
\gcl{1}\gmp{f}\gnl
\gev\gnl
\gob{2}{\un{1}}
\gend
}}~~\Longleftrightarrow~~
{{\footnotesize
\gbeg{3}{8}
\gvac{2}\got{1}{W^*}\gnl
\gdb\gcl{3}\gnl
\gbr\gnl
\gcl{1}\gmp{\eta^2_V}\gnl
\gcl{1}\gbr\gnl
\gcl{1}\gcl{1}\gmp{f}\gnl
\gcl{1}\gev\gnl
\gob{1}{V^*}
\gend}}=f^*,
\end{equation}   
true for any morphism $f:V\ra W$ in $\Cc$ with $(V, \eta_V)\in {\cal R}(\Cc, c)$, which follows from the naturality 
of $c^{-1}_{V,-}$ applied to ${\rm ev}_V(\Id_{V^*}\ot f\eta^2_V)c_{V, W^*}: V\ot W^*\ra \un{1}$ and the 
definition of $f^*$. 

Now, that ${\rm ev}_V$ is a morphism in ${\cal R}(\Cc, c)$ is a consequence of the computation 
\[
{{\footnotesize
\gbeg{2}{5}
\got{1}{V^*}\got{1}{V}\gnl
\gcl{1}\gcl{1}\gnl
\gsbox{2}\gnot{\hspace*{5mm}\eta_{V^*\ot V}}\gnl
\gev\gnl
\gob{2}{\un{1}}
\gend}}
=
{{\footnotesize
\gbeg{2}{6}
\got{1}{V^*}\got{1}{V}\gnl
\gbr\gnl
\gbr\gnl
\gmp{\eta_{V^*}}\gmp{\eta_V}\gnl
\gev\gnl
\gob{2}{\un{1}}
\gend
}}
=
{{\footnotesize
\gbeg{4}{7}
\got{1}{V^*}\gvac{2}\got{1}{V}\gnl
\gcl{1}\gdb\gcl{1}\gnl
\gbr\gev\gnl
\gbr\gnl
\gcl{1}\gmp{\eta^2_V}\gnl
\gev\gnl
\gob{2}{\un{1}}
\gend
}}
=
{{\footnotesize
\gbeg{4}{7}
\gvac{2}\got{1}{V^*}\got{1}{V}\gnl
\gdb\gcl{1}\gcl{1}\gnl
\gcl{1}\gibr\gcl{1}\gnl
\gbr\gev\gnl
\gcl{1}\gmp{\eta^2_V}\gnl
\gev\gnl
\gob{2}{\un{1}}
\gend}}
=
{{\footnotesize
\gbeg{4}{8}
\gvac{2}\got{1}{V^*}\got{1}{V}\gnl
\gdb\gcl{3}\gcl{5}\gnl
\gbr\gnl
\gcl{1}\gmp{\eta^2_V}\gnl
\gcl{1}\gbr\gnl
\gcl{1}\gev\gcl{1}\gnl
\gwev{4}\gnl
\gvac{1}\gob{2}{\un{1}}
\gend
}}
\equal{\equref{rdualbasesribbon}}
{\rm ev}_V.
\] 
Likewise, we compute that 
\[
{{\footnotesize
\gbeg{2}{5}
\got{2}{\un{1}}\gnl
\gdb\gnl
\gsbox{2}\gnot{\hspace*{5mm}\eta_{V\ot V^*}}\gnl
\gcl{1}\gcl{1}\gnl
\gob{1}{V}\gob{1}{V^*}
\gend
}}
=
{{\footnotesize
\gbeg{2}{6}
\got{2}{\un{1}}\gnl
\gdb\gnl
\gbr\gnl
\gbr\gnl
\gmp{\eta_V}\gmp{\eta_{V^*}}\gnl
\gob{1}{V}\gob{1}{V^*}
\gend
}}
=
{{\footnotesize
\gbeg{4}{6}
\gvac{1}\got{2}{\un{1}}\gnl
\gdb\gnl
\gbr\gdb\gnl
\gbr\gmp{\eta_V}\gcl{1}\gnl
\gmp{\eta_V}\gev\gcl{1}\gnl
\gob{1}{V}\gvac{2}\gob{1}{V^*}
\gend
}}
\equal{\equref{etamorgen}}
{\rm coev}_V,
\]
and so ${\rm coev}_V$ is a morphism in ${\cal R}(\Cc, c)$, as stated. 

Finally, from the left rigid monoidal structure of ${\cal R}(\Cc, c)$ we get that 
$\eta:=(\eta_V)_V$ is a twist on ${\cal R}(\Cc, c)$, and so the proof is finished.  
\end{proof}

\thref{ribbonfrombraided} allows to construct ribbon categories from left or right rigid monoidal 
categories.  

\begin{proposition}\prlabel{prdefrlC}
Let $\Cc$ be a left rigid (strict) monoidal category. Then ${\mf R}_l(\Cc)$ is a category whose  

$\bullet$ objects are triples $(V, c_{V, -}, \eta_V)$ consisting of an object $V$ of $\Cc$, a natural 
isomorphism $c_{V, -}=\left(c_{V, X}: V\ot X\ra X\ot V\right)_{X\in {\rm Ob}(\Cc)}$ and an automorphism $\eta_V$ of $V$ in $\Cc$ 
such that 
\begin{equation}\eqlabel{center1} 
c_{V, X\ot Y}=(\Id_X\ot c_{V, Y})(c_{V, X}\ot \Id_Y),~\forall~X,~Y\in \Cc,
\end{equation}
\equref{msquaretwist} holds, and 
\begin{equation}
(\Id_X\ot \eta_V)c_{V, X}=c_{V, X}(\eta_V\ot \Id_X)~,~\forall~X\in {\rm Ob}(\Cc);\eqlabel{ribbon3}
\end{equation}    

$\bullet$ morphisms $f: (V, c_{V,-}, \eta_V)\ra (V', c_{V',-}, \eta_{V'})$ are morphisms 
$f:V\ra V'$ in $\Cc$ for which $(\Id_X\ot f)c_{V, X}=c_{V', X}(f\ot \Id_X)$, for any object $X$ of $\Cc$, 
and $f\eta_V=\eta_{V'}f$. 
\end{proposition}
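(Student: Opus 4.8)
The plan is to observe that ${\mf R}_l(\Cc)$ is by construction the category ${\cal R}({\cal Z}_l(\Cc), c)$ of \deref{ribbfrombraidwithld}, where ${\cal Z}_l(\Cc)$ is the left centre of $\Cc$, and then to unwind \deref{ribbfrombraidwithld} and \thref{ribbonfrombraided}. Recall that the objects of ${\cal Z}_l(\Cc)$ are the pairs $(V, c_{V,-})$ with $c_{V,-}=(c_{V,X}: V\ot X\ra X\ot V)_{X\in {\rm Ob}(\Cc)}$ a natural isomorphism obeying \equref{center1}, and its morphisms $f: (V, c_{V,-})\ra (V', c_{V',-})$ are the morphisms $f: V\ra V'$ in $\Cc$ with $(\Id_X\ot f)c_{V,X}=c_{V',X}(f\ot \Id_X)$ for all $X$. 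It is a braided (strict) monoidal category: the tensor product is $(V, c_{V,-})\ot (W, c_{W,-})=(V\ot W, c_{V\ot W,-})$, the half-braiding of $V\ot W$ being forced by those of $V$ and $W$ through \equref{center1}; the unit is $(\un{1}, \Id)$; the associativity and unit constraints are those of $\Cc$; and the braiding is $c_{(V, c_{V,-}),(W, c_{W,-})}:=c_{V,W}$, a component of the half-braiding of the first object.

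The one nontrivial point is that ${\cal Z}_l(\Cc)$ is left rigid whenever $\Cc$ is, so that \thref{ribbonfrombraided} applies to it. Given $(V, c_{V,-})\in {\cal Z}_l(\Cc)$, pick a left dual $V^*$ of $V$ in $\Cc$ and endow it with the natural isomorphism
\[
c_{V^*,X}:=({\rm ev}_V\ot \Id_{X\ot V^*})(\Id_{V^*}\ot c^{-1}_{V,X}\ot \Id_{V^*})(\Id_{V^*\ot X}\ot {\rm coev}_V): V^*\ot X\ra X\ot V^*,
\]
i.e. the mate of $c^{-1}_{V,X}$ under the left duality of $\Cc$. A routine diagram chase, using \equref{rrigid}, the naturality of $c_{V,-}$ and \equref{center1} for $c_{V,-}$, shows that $(V^*, c_{V^*,-})$ again satisfies \equref{center1} and that ${\rm ev}_V$ and ${\rm coev}_V$ become morphisms in ${\cal Z}_l(\Cc)$; since the triangle identities are inherited from $\Cc$, $(V^*, c_{V^*,-})$ is a left dual of $(V, c_{V,-})$ in ${\cal Z}_l(\Cc)$. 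Hence $({\cal Z}_l(\Cc), c)$ is a left rigid braided (strict) monoidal category, and by \thref{ribbonfrombraided} the category ${\cal R}({\cal Z}_l(\Cc), c)$ is ribbon monoidal.

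It remains to identify ${\cal R}({\cal Z}_l(\Cc), c)$ with the category in the statement. By \deref{ribbfrombraidwithld} an object of ${\cal R}({\cal Z}_l(\Cc), c)$ is a pair $\big((V, c_{V,-}), \eta\big)$ with $(V, c_{V,-})\in {\cal Z}_l(\Cc)$ and $\eta$ an automorphism of $(V, c_{V,-})$ in ${\cal Z}_l(\Cc)$ satisfying \equref{etamorgen}; but an automorphism of $(V, c_{V,-})$ in ${\cal Z}_l(\Cc)$ is exactly an automorphism $\eta_V$ of $V$ in $\Cc$ that is also a morphism in ${\cal Z}_l(\Cc)$, i.e. obeys \equref{ribbon3}, and, since the braiding of ${\cal Z}_l(\Cc)$ restricted to $(V, c_{V,-})$ is given by the components of $c_{V,-}$, condition \equref{etamorgen} read in ${\cal Z}_l(\Cc)$ is precisely \equref{msquaretwist} for the half-braiding $c_{V,-}$ (which is the very reason \equref{etamorgen} was modelled on \equref{msquaretwist}). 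In the same way a morphism of ${\cal R}({\cal Z}_l(\Cc), c)$ between such objects is a morphism $f: V\ra V'$ in $\Cc$ with $(\Id_X\ot f)c_{V,X}=c_{V',X}(f\ot \Id_X)$ for all $X$ and $f\eta_V=\eta_{V'}f$. This is precisely the description in the statement, so ${\mf R}_l(\Cc):={\cal R}({\cal Z}_l(\Cc),c)$ is as claimed (and ribbon monoidal by \thref{ribbonfrombraided}); the analogue for ${\mf R}_r(\Cc)$ follows by applying the same argument to the reverse category $\ov{\Cc}$, i.e. via the right centre. The main obstacle is the left rigidity of ${\cal Z}_l(\Cc)$: one must produce the right half-braiding on $V^*$ and verify \equref{center1} for it together with the centrality of ${\rm ev}_V$ and ${\rm coev}_V$; everything else is a matter of unravelling the definitions of ${\cal Z}_l(\Cc)$, \deref{ribbfrombraidwithld} and \thref{ribbonfrombraided}.
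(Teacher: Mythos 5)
Your proposal is correct and follows essentially the same route as the paper: the paper's proof is precisely the observation that ${\mf R}_l(\Cc)={\cal R}({\cal Z}_l(\Cc), c)$ together with the remark that \equref{msquaretwist} is the second equality in \equref{etamorgen}. The extra material you supply on the left rigidity of ${\cal Z}_l(\Cc)$ (the dual half-braiding on $V^*$ and the centrality of ${\rm ev}_V$, ${\rm coev}_V$) is the content the paper defers to \thref{ribbonconstr}, where it is handled by citing \cite[Proposition 3]{streeqd} and \cite[Lemma 4.1]{kasturaev}.
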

\begin{proof}
One can easily check that ${\mf R}_l(\Cc)={\cal R}({\cal Z}_l(\Cc), c)$, where ${\cal Z}_l(\Cc)$ is the left center 
of $\Cc$ as in \cite{bcp}, a braided category. We only notice that \equref{msquaretwist} 
is nothing but the second equality in \equref{etamorgen}.  
\end{proof}

We now uncover the ribbon structure of ${\cal R}_l(\Cc)$, the category denoted by $\Dc(\Cc)$ in \cite{kasturaev}. 
For the choice of the natural isomorphism $c_{V^*,-}$ below see \cite[Proposition 7.4]{jsp} or \cite{yetter}. 

\begin{theorem}\thlabel{ribbonconstr}
Let $\Cc$ be a left rigid monoidal category. Then ${\cal R}_l(\Cc)$ is a ribbon category with the following structure: 

$\bullet$ If $(V, c_{V,-}, \eta_V), (W, c_{W, -}, \eta_W)\in {\cal R}_l(\Cc)$ then 
\begin{equation}\eqlabel{monstrribbconstr}
(V, c_{V,-}, \eta_V)\ot (W, c_{W, -}, \eta_W)=(V\ot W, c_{V\ot W, -}, \eta_{V\ot W}),
\end{equation}
where $c_{V\ot W, -}$ and $\eta_{V\ot W}$ are respectively given by 
\begin{eqnarray}
&&c_{V\ot W, X}=(c_{V, X}\ot \Id_W)(\Id_V\ot c_{W, X}),~\forall~X\in \Cc,\eqlabel{center2}\\ 
&&\eta_{V\ot W}=(\eta_V\ot\eta_W)c_{W,V}c_{V,W};\eqlabel{etamonoidal}
\end{eqnarray} 

$\bullet$ The unit object is $(\un{1}, c_{\un{1}, -}=(r^{-1}_Xl_X)_{X\in \Cc}\equiv \Id, \eta_{\un{1}}=\Id_{\un{1}})$, and the 
associativity and the left and right unit constraints are the same as those of $\Cc$;

$\bullet$ The braiding $\mf{c}$ is determined by 
\begin{equation}
c_{V,W}: (V, c_{V, -}, \eta_V)\ot (W, c_{W, -}, \eta_W)\ra (W, c_{W, -}, \eta_W)\ot (V, c_{V, -}, \eta_V),\eqlabel{braidingribbconstr} 
\end{equation}
an isomorphism in ${\cal R}_l(\Cc)$; 

$\bullet$ For $(V, c_{V, -}, \eta_V)\in {\cal R}_l(\Cc)$, a left dual object for it is $(V^*, c_{V^*,-}, \eta_{V^*})$, where
\begin{equation}
c_{V^*, X}=({\rm ev}_V\ot \Id_{X\ot V^*})(\Id_{V^*}\ot c^{-1}_{V, X}\ot \Id_{V^*})(\Id_{V^*\ot X}\ot {\rm coev}_V),\eqlabel{braiddualobjribb}
\end{equation}
for all $X\in {\rm Ob}(\Cc)$, and 
\begin{equation}
\eta_{V^*}=(\eta_V)^*.\eqlabel{twistdualobjribb}
\end{equation}
The evaluation and coevaluation morphisms are ${\rm ev}_V$ and ${\rm coev}_V$, viewed now as morphisms in ${\cal R}_l(\Cc)$; 

$\bullet$ The twist is given by 
\begin{equation}
\eta_V: (V, c_{V, -}, \eta_V)\ra (V, c_{V, -}, \eta_V),\eqlabel{twistribbonconstr}
\end{equation} 
an automorphism in ${\cal R}_l(\Cc)$. 
\end{theorem}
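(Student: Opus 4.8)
The plan is to deduce the statement from \thref{ribbonfrombraided}. By \prref{prdefrlC} we already have an identification of categories ${\cal R}_l(\Cc)={\cal R}({\cal Z}_l(\Cc),c)$, where ${\cal Z}_l(\Cc)$ is the left centre of $\Cc$ equipped with its canonical braiding $c$, the one given on objects by $c_{(V,c_{V,-}),(W,c_{W,-})}:=c_{V,W}$. Indeed, an object $(V,c_{V,-},\eta_V)$ of ${\cal R}_l(\Cc)$ is precisely an object $(V,c_{V,-})$ of ${\cal Z}_l(\Cc)$ together with an automorphism $\eta_V$ of it in ${\cal Z}_l(\Cc)$: condition \equref{ribbon3} says exactly that $\eta_V$ is a morphism in ${\cal Z}_l(\Cc)$, and condition \equref{msquaretwist} is exactly the second equality in \equref{etamorgen} read inside ${\cal Z}_l(\Cc)$. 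Thus, once we know that $({\cal Z}_l(\Cc),c)$ is a \emph{left rigid} braided (strict) monoidal category, \thref{ribbonfrombraided} equips ${\cal R}({\cal Z}_l(\Cc),c)$ with a ribbon monoidal structure, and it only remains to transport that structure along the above identification and check that it agrees, term by term, with the one in the statement.

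So the main point is the left rigidity of ${\cal Z}_l(\Cc)$. Given $(V,c_{V,-})\in{\cal Z}_l(\Cc)$, one sets $(V,c_{V,-})^{*}:=(V^{*},c_{V^{*},-})$, with $V^{*}$ the left dual of $V$ in $\Cc$ and $c_{V^{*},-}$ defined by \equref{braiddualobjribb}, and takes as evaluation and coevaluation morphisms those of $\Cc$. Using only the left rigidity of $\Cc$ and the half-braiding axiom \equref{center1} for $c_{V,-}$, one checks that $c_{V^{*},-}$ is again a natural isomorphism satisfying \equref{center1}, and that ${\rm ev}_V$ and ${\rm coev}_V$ intertwine the relevant half-braidings, i.e. define morphisms in ${\cal Z}_l(\Cc)$; these verifications are the computations of \cite[Proposition 7.4]{jsp} (see also \cite{yetter} and \cite{bcp}). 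Since the triangle identities for ${\rm ev}_V$ and ${\rm coev}_V$ already hold in $\Cc$, they hold in ${\cal Z}_l(\Cc)$, so $(V^{*},c_{V^{*},-})$ is a left dual of $(V,c_{V,-})$ and ${\cal Z}_l(\Cc)$ is left rigid. After replacing $\Cc$ by a strictification, which changes nothing by coherence, it is moreover strict monoidal, so \thref{ribbonfrombraided} applies.

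Finally I would apply \thref{ribbonfrombraided} to $({\cal Z}_l(\Cc),c)$ and unwind its conclusion through ${\cal R}_l(\Cc)={\cal R}({\cal Z}_l(\Cc),c)$. This yields exactly the listed data: the tensor product \equref{monstrribbconstr}, whose half-braiding component $c_{V\ot W,-}$ is the half-braiding of the tensor product in ${\cal Z}_l(\Cc)$, namely \equref{center2}, and whose twist component is $\eta_{V\ot W}=(\eta_V\ot\eta_W)c_{W,V}c_{V,W}$, i.e. \equref{etamonoidal} (this is \equref{etamonoidalbr} evaluated for the centre braiding); the unit object $(\un{1},\Id,\Id_{\un{1}})$; the braiding \equref{braidingribbconstr}, which is the centre braiding $c$; the left dual $(V^{*},c_{V^{*},-},(\eta_V)^{*})$ with duality morphisms ${\rm ev}_V,{\rm coev}_V$, i.e. \equref{braiddualobjribb}--\equref{twistdualobjribb}; and the twist \equref{twistribbonconstr}. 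The balancing identity \equref{ribboncateg1} and the ribbon identity \equref{ribboncateg2} are part of the conclusion of \thref{ribbonfrombraided}, so no further checking is required. The only genuine obstacle is the left rigidity of ${\cal Z}_l(\Cc)$ used in the second step; once that is secured --- quoted from \cite{jsp,yetter} or dispatched by a short diagrammatic argument --- everything else is bookkeeping, and one notes in passing that this ${\cal R}_l(\Cc)$ recovers the ribbon category $\Dc(\Cc)$ of \cite{kasturaev}.
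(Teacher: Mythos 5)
Your proposal is correct and follows essentially the same route as the paper: identify ${\cal R}_l(\Cc)$ with ${\cal R}({\cal Z}_l(\Cc),c)$ via \prref{prdefrlC}, invoke \thref{ribbonfrombraided}, and reduce everything to the left rigidity of ${\cal Z}_l(\Cc)$. The paper handles that last point slightly differently --- citing \cite[Proposition 3]{streeqd} to reduce rigidity to the invertibility of $c_{V^*,X}$ and then \cite[Lemma 4.1]{kasturaev} for the explicit inverse --- whereas you verify the duality in the centre directly via \cite{jsp,yetter}, but this is only a difference in which standard reference is quoted.
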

\begin{proof}
Since ${\mf R}_l(\Cc)={\cal R}({\cal Z}_l(\Cc), c)$, the only thing we must check is the fact 
${\cal Z}_l(\Cc)$ is left rigid, provided that $\Cc$ is so. To this end, according to \cite[Proposition 3]{streeqd} it suffices to show 
that $c_{V^*, X}$ in \equref{braiddualobjribb} is an isomorphism in $\Cc$, 
for all $X\in {\rm Ob}(\Cc)$, since this will imply that $(V^*, c_{V^*, -})$ is a left dual of $(V, c_{V,-})$ in ${\cal Z}_l(\Cc)$. 
But this fact was proved in \cite[Lemma 4.1]{kasturaev}, by showing that     
\[
c_{V^*, X}^{-1}=
{{\footnotesize
\gbeg{4}{5}
\gvac{2}\got{1}{X}\got{1}{V^*}\gnl
\gdbd\gvac{2}\gcl{1}\gcl{1}\gnl
\gcl{1}\gbrb\gvac{2}\gcl{1}\gnl
\gcl{1}\gcl{1}\gevd\gnl
\gob{1}{V^*}\gob{1}{X}
\gend
}},
~~\mbox{with}~~
c_{V, X}:=
{{\footnotesize
\gbeg{2}{3}
\got{1}{V}\got{1}{X}\gnl
\gbrb\gnl
\gob{1}{X}\gob{1}{V}
\gend}},
\] 
is the inverse of $c_{V^*, X}$ in $\Cc$, for all $X\in \Cc$. Here, as the graphical notation suggests, the evaluation and coevaluation morphisms 
with a black dot are ${\rm ev}'_V$ and ${\rm coev}'_V$ defined as in \equref{rightdualitysovereign} and 
\equref{rightdualitysovereign2coev}, respectively, of course with $c_{V, V^*}$ replaced by the component $V^*$ of our $c_{V, -}$.  
\end{proof}

\begin{definition}
If $(\Cc, c, \eta)$ and $({\cal D}, d, \theta)$ are ribbon categories then a functor $F: \Cc\ra {\cal D}$ is called a ribbon 
functor if it is a braided monoidal functor such that $F(\eta_V)=\theta_{F(V)}$, for all 
$V\in {\rm Ob}(\Cc)$, and is compatible with the duality.  
\end{definition}

The left handed version of the universal property of the centre (see \cite[Proposition XIII.4.3]{kas}) leads to a similar property 
for ${\cal R}_l(\Cc)$; see \cite[Theorem 2.5]{kasturaev}. It can be derived also from the universal property of 
our ${\cal R}(\Cc, c)$ that we prove below.  Indeed, the universal property of the (left) centre assigns (under some conditions) 
to a monoidal functor $F$ from a braided category $\Dc$ to a monoidal category $\Cc$ a braided monoidal functor ${\cal Z}_l(F)$  
from $\Dc$ to ${\cal Z}_l(\Cc)$, the (left) centre of $\Cc$. Then \cite[Proposition XIII.4.3]{kas} follows by applying our 
universal property to the braided functor ${\cal Z}_l(F)$.  

\begin{proposition}\prlabel{UnivRibbonCateg}
Let $({\cal D}, d, \theta)$ be a ribbon category, $(\Cc, c)$ a left rigid braided category and $F: ({\cal D}, d)\ra (\Cc, c)$ 
a braided monoidal functor. Then there exists a unique 
ribbon functor ${\cal R}(F): {\cal D}\ra {\cal R}(\Cc)$ such that $\Pi\circ {\cal R}(F)=F$, where 
$\Pi: {\cal R}(\Cc)\ra \Cc$ is the functor that forgets the ribbon twist.  
\end{proposition}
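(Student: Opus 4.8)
The plan is to define ${\cal R}(F)$ on objects by ${\cal R}(F)(X)=(F(X),F(\theta_X))$ and on morphisms by ${\cal R}(F)(f)=F(f)$, so that $\Pi\circ {\cal R}(F)=F$ holds by construction; uniqueness will then be forced by this same prescription. The first and main point to check is that $(F(X),F(\theta_X))$ really is an object of ${\cal R}(\Cc,c)$, i.e.\ that the automorphism $F(\theta_X)$ of $F(X)$ obeys \equref{etamorgen}. Since $({\cal D},d,\theta)$ is ribbon, \prref{balancedfaarfromribbon} gives that $\theta$ satisfies \equref{msquaretwist} in ${\cal D}$, namely $\theta_X^{-2}=({\rm ev}_X\ot\Id_X)(\Id_{X^*}\ot d^{-1}_{X,X})(d_{X,X^*}{\rm coev}_X\ot\Id_X)$. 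Applying $F$ and transporting the right hand side through the braided monoidal structure of $F$ --- rewriting $F(d_{?,?})$ via the corresponding component of $c$ together with the structure isomorphisms $F_2$, and using that, ${\cal D}$ and $\Cc$ being left rigid, there is a canonical natural isomorphism $F(X^*)\cong F(X)^*$ carrying $F({\rm ev}_X)$ and $F({\rm coev}_X)$ to ${\rm ev}_{F(X)}$ and ${\rm coev}_{F(X)}$ --- one recovers exactly \equref{etamorgen} with $V=F(X)$ and $\eta_V=F(\theta_X)$. That $F(f)$ is a morphism of ${\cal R}(\Cc,c)$ whenever $f\colon X\ra X'$ is a morphism of ${\cal D}$ follows by applying $F$ to the naturality identity $\theta_{X'}f=f\theta_X$. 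Hence ${\cal R}(F)$ is a well-defined functor and $\Pi\circ {\cal R}(F)=F$.

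Next I would promote ${\cal R}(F)$ to a braided monoidal functor by checking that the structure isomorphisms $F_2=(F_2^{X,Y}\colon F(X)\ot F(Y)\ra F(X\ot Y))$ and $F_0\colon\un{1}\ra F(\un{1})$ of $F$ are morphisms in ${\cal R}(\Cc,c)$. Since $\theta$ is a twist on ${\cal D}$ we have $\theta_{X\ot Y}=(\theta_X\ot\theta_Y)d_{Y,X}d_{X,Y}$; applying $F$ and then using naturality of $F_2$ together with $F(d_{X,Y})F_2^{X,Y}=F_2^{Y,X}c_{F(X),F(Y)}$ and its analogue for $d_{Y,X}$, one finds
\[
F(\theta_{X\ot Y})\circ F_2^{X,Y}=F_2^{X,Y}\circ\bigl[(F(\theta_X)\ot F(\theta_Y))c_{F(Y),F(X)}c_{F(X),F(Y)}\bigr],
\]
and the morphism in brackets is precisely the twist $\eta_{{\cal R}(F)(X)\ot {\cal R}(F)(Y)}$ prescribed by \equref{etamonoidalbr} in ${\cal R}(\Cc,c)$; thus $F_2^{X,Y}$ is a morphism ${\cal R}(F)(X)\ot {\cal R}(F)(Y)\ra {\cal R}(F)(X\ot Y)$. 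For $F_0$ one uses the standard fact that $\theta_{\un{1}}=\Id_{\un{1}}$, so that both $(\un{1},\Id_{\un{1}})$ and $(F(\un{1}),F(\theta_{\un{1}}))=(F(\un{1}),\Id_{F(\un{1})})$ carry the identity twist and $F_0$ is trivially a morphism. The coherence conditions for $({\cal R}(F),F_2,F_0)$ and the braiding compatibility hold because the associativity and unit constraints and the braiding of ${\cal R}(\Cc,c)$ are those of $(\Cc,c)$ (see \thref{ribbonfrombraided}) and $\Pi$ is a faithful strict braided monoidal functor sending these data to the corresponding ones for $F$.

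Finally, ${\cal R}(F)$ is a ribbon functor: being a monoidal functor between rigid categories it is automatically compatible with the duality, and the twist condition holds since, by \equref{twistribbonconstrbr}, the twist of the object $(F(X),F(\theta_X))$ of ${\cal R}(\Cc,c)$ is by definition $F(\theta_X)={\cal R}(F)(\theta_X)$. For uniqueness, let $G\colon{\cal D}\ra {\cal R}(\Cc)$ be any ribbon functor with $\Pi\circ G=F$. Then the underlying object of $G(X)$ is $F(X)$, say $G(X)=(F(X),\ov{\eta}_X)$; since $G$ is ribbon, $G(\theta_X)$ equals the twist of $G(X)$, which by \equref{twistribbonconstrbr} is $\ov{\eta}_X$, while $\Pi\circ G=F$ on morphisms gives $G(\theta_X)=F(\theta_X)$, so $\ov{\eta}_X=F(\theta_X)$ and $G(X)={\cal R}(F)(X)$. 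On morphisms $G(f)=F(f)={\cal R}(F)(f)$, and the monoidal and braided constraints of $G$ are sent by $\Pi$ to those of $F$, hence coincide with those of ${\cal R}(F)$. Therefore $G={\cal R}(F)$. The main obstacle is the diagrammatic computation of the first paragraph --- feeding the ribbon identity \equref{msquaretwist} of ${\cal D}$ through the braided monoidal structure and the canonical duality isomorphism of $F$ to recover \equref{etamorgen} --- everything afterwards being bookkeeping.
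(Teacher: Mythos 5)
Your proposal is correct and follows essentially the same route as the paper: define ${\cal R}(F)(X)=(F(X),F(\theta_X))$ and ${\cal R}(F)(f)=F(f)$ (which immediately forces uniqueness), verify \equref{etamorgen} for $F(\theta_X)$ by pushing \equref{msquaretwist} through the braided strong monoidal structure of $F$ and the canonical identification $F(X^\sharp)\cong F(X)^*$, and use naturality of $\theta$ for morphisms. The only difference is that you spell out explicitly that $F_2$ and $F_0$ are morphisms in ${\cal R}(\Cc,c)$, a detail the paper leaves implicit.
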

\begin{proof}
If there exists a ribbon functor ${\cal R}(F): {\cal D}\ra {\cal R}(\Cc)$ such that $\Pi\circ {\cal R}(F)=F$ it 
follows that ${\cal R}(F)(X)=(F(X), F(\theta_X))$, for any object $X$ of $\Dc$, and that 
${\cal R}(F)(f)=F(f)$, for any morphism $f$ in $\Dc$. This proves the uniqueness of ${\cal R}(F)$.  

Conversely, define ${\cal R}(F)(X)=(F(X), F(\theta_X))$, for any object $X$ of $\Dc$, and for a morphism $f$ in 
$\Dc$ set ${\cal R}(F)(f)=F(f)$. Since $F$ is strong monoidal it respects the left dualities on $\Dc$ and $\Cc$. By using the 
uniqueness (up to isomorphism) of a left dual object we can assume without loss of generality that $F(X^\sharp)=F(X)^*$, 
where $X^\sharp$ is the left dual object of $X$ in $\Dc$ and $F(X)^*$ is the left dual of $F(X)$ in $\Cc$. 
Then the evaluation and coevaluation 
morphisms for the adjunction $F(X)^*\dashv F(X)$ in $\Cc$ are obtained from those of $X^\sharp\dashv X$ in $\Dc$ and the 
strong monoidal structure of the functor $F$. But $F$ is, moreover, braided monoidal and together with the above 
arguments and the fact that $\theta$ verifies \equref{msquaretwist} this implies that $\eta_{F(X)}:=F(\theta_X)$ 
verifies \equref{etamorgen}. In other words ${\cal R}(F)$ is well defined on objects. It is well defined on morphisms, too, 
since $\theta$ is a natural isomorphism.     
\end{proof} 

\begin{corollary}\colabel{ribboncaseforcentrefunctor}
If $(\Cc, c, \eta)$ is a ribbon category then there exists a unique 
ribbon functor ${\cal R}: \Cc\ra {\cal R}(\Cc, c)$ such that 
$\Pi\circ {\cal R}=\Id_\Cc$. 
\end{corollary}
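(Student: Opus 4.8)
The plan is to read off the statement as the special case $\Dc=\Cc$, $F=\Id_\Cc$ of the universal property proved in \prref{UnivRibbonCateg}. The point is simply that a ribbon category is in particular left rigid and braided, so $(\Cc,c)$ qualifies as the target category in that proposition, while at the same time $(\Cc,c,\eta)$ itself serves as the source ribbon category and $\Id_\Cc$ as the comparison functor.

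Concretely, first I would observe that $\Id_\Cc: (\Cc,c)\ra(\Cc,c)$ is a (strong) braided monoidal functor, so the hypotheses of \prref{UnivRibbonCateg} are satisfied with $(\Dc,d,\theta)=(\Cc,c,\eta)$ and $F=\Id_\Cc$. Applying that proposition then yields a unique ribbon functor ${\cal R}(\Id_\Cc):\Cc\ra{\cal R}(\Cc,c)$ with $\Pi\circ{\cal R}(\Id_\Cc)=\Id_\Cc$, where $\Pi$ forgets the ribbon twist, and I would simply put ${\cal R}:={\cal R}(\Id_\Cc)$. Unwinding the construction in the proof of \prref{UnivRibbonCateg}, this functor is given on objects by ${\cal R}(V)=(V,\eta_V)$ and on morphisms by ${\cal R}(f)=f$; uniqueness of a ribbon functor ${\cal R}$ with $\Pi\circ{\cal R}=\Id_\Cc$ is inherited verbatim from the uniqueness clause of \prref{UnivRibbonCateg}.

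I do not expect any genuine obstacle. The only point that might appear to need separate verification is that $(V,\eta_V)$ really is an object of ${\cal R}(\Cc,c)$, i.e. that $\eta_V$ obeys \equref{etamorgen}; but in a ribbon category this is precisely the content of \prref{balancedfaarfromribbon} (the ribbon twist satisfies \equref{msquaretwist}, which is the second equality in \equref{etamorgen}, the first one then following from the braid relations), and in any event it is already subsumed in the proof of \prref{UnivRibbonCateg}. Thus the corollary requires nothing beyond invoking the preceding proposition with the identity functor.
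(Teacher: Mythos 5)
Your proposal is correct and is exactly the paper's argument: the corollary is obtained by specializing \prref{UnivRibbonCateg} to $\Dc=\Cc$ and $F=\Id_\Cc$, setting ${\cal R}={\cal R}(\Id_\Cc)$. The additional unwinding you give (explicit action on objects and the check that $\eta_V$ satisfies \equref{etamorgen}) is consistent with, and already subsumed in, the proof of that proposition.
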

\begin{proof}
Take ${\cal D}=\Cc$ and $F=\Id_\Cc$ in \prref{UnivRibbonCateg}. Then 
${\cal R}={\cal R}(\Id_\Cc)$. 
\end{proof}

In what follows we also need the right handed version of ${\cal R}_l(\Cc)$. In fact, if we start with a right rigid monoidal 
category then $\ov{\Cc}$ is a left rigid monoidal category, and so we can consider ${\cal R}_l(\ov{\Cc})$. 
Thus ${\cal R}_r(\Cc):=\ov{{\cal R}_l(\ov{\Cc})}$ is a ribbon category, too. More precisely, we have the following.

\begin{proposition}
Let $\Cc$ be a right rigid monoidal category. Then the objects of ${\cal R}_r(\Cc)$ are triples 
$(V, c_{-,V}, \theta_V)$ consisting of an object $V$ of $\Cc$, a natural isomorphism 
$c_{-,V}=(c_{X, V}: X\ot V\ra V\ot X)_{X\in {\rm Ob}(\Cc)}$ and a morphism $\theta_V: V\ra V$ in $\Cc$, subject  
to the following conditions:

$\bullet$ $c_{\un{1}, V}\equiv \Id_V$ and $c_{X\ot Y, V}=(c_{X, V}\ot \Id_Y)(\Id_X\ot c_{Y, V})$, for all $X, Y\in \Cc$;

$\bullet$ $\theta_V$ is an automorphism of $V$ in $\Cc$ obeying $c_{X, V}(\Id_X\ot \theta_V)=(\theta_V\ot \Id_X)c_{X, V}$, 
for all $X\in \Cc$, and 
\begin{equation}\eqlabel{condetaforRrightC} 
\theta^{-2}_V:=(\Id_V\ot {\rm ev}'_V)(c^{-1}_{V, V}\ot\Id_{{}^*V})(\Id_V\ot c_{{}^*V,V})(\Id_V\ot {\rm coev}'_V). 
\end{equation}

A morphism $f: (V, c_{-,V}, \theta_V)\ra (W, c_{-,W}, \theta_W)$ in ${\cal Z}_r(\Cc)$ is a morphism 
$f:V\ra W$ in $\Cc$ such that $c_{X,W}(f\ot \Id_X)=(\Id_X\ot f)c_{X, V}$, for all $X\in \Cc$. 

The category ${\cal R}_r(\Cc)$ is ribbon via the following structure:

$\bullet$ the tensor product of $(V, c_{-,V}, \theta_V)$ and $(W, c_{-,W}, \theta_W)$ in ${\cal R}_r(\Cc)$ 
is $(V\ot W, c_{-, V\ot W}, \theta_{V\ot W})$, where 
$
c_{X, V\ot W}=(\Id_V\ot c_{X, W})(c_{X, V}\ot \Id_W)
$, for all $X\in \Cc$, 
and $\theta_{V\ot W}=(\theta_V\ot \theta_W)c_{W,V}c_{V,W}$, and the tensor product of two morphisms in ${\cal R}_r(\Cc)$ 
is their tensor product in $\Cc$, while the associativity and the left and right unit 
constraints are the same as those of $\Cc$;

$\bullet$ the braiding between two objects $(V, c_{-,V}, \theta_V)$ and $(W, c_{-,W}, \theta_W)$ in ${\cal R}_r(\Cc)$ 
is given by $c_{V, W}$;

$\bullet$ the right dual object of $(V, c_{-, V}, \theta_V)$ in ${\cal R}_r(\Cc)$ is $({}^*V, c_{-, {}^*V}, \theta_{{}^*V})$ 
determined by 
\[
c_{X, {}^*V}=(\Id_{{}^*V\ot X}\ot {\rm ev}'_V)(\Id_{V^*}\ot c^{-1}_{V, X}\ot \Id_{{}^*V})({\rm coev}'_V\ot \Id_{X\ot {}^*V}), 
\]
for all $X\in \Cc$, $\theta_{{}^*V}={}^*(\theta_V)$, and the evaluation and coevaluation morphisms equal 
${\rm ev}'_V$ and ${\rm coev}'_V$, respectively;

$\bullet$ the twist is given by $\theta_V: (V, c_{-, V}, \theta_V)\ra (V, c_{-, V}, \theta_V)$, for all $V\in \Cc$. 
\end{proposition}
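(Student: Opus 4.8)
The plan is to deduce everything from the definition ${\cal R}_r(\Cc):=\ov{{\cal R}_l(\ov{\Cc})}$ together with the explicit description of ${\cal R}_l$ furnished by \prref{prdefrlC} and \thref{ribbonconstr}. First I would record that, since $\Cc$ is right rigid, $\ov{\Cc}$ is left rigid, so ${\cal R}_l(\ov{\Cc})={\cal R}({\cal Z}_l(\ov{\Cc}), c)$ is defined and, by \thref{ribbonconstr}, is a ribbon category; and the reverse of a ribbon category is again ribbon, with the same twist, the reversed associativity and unit constraints, the mirror braiding $\ov{d}_{X, Y}:=d_{Y, X}$, and right duals supplied by the left duals — here one only has to note that \equref{ribboncateg1} and \equref{ribboncateg2} survive the reversal because a ribbon category is sovereign, so that its left and right transposes coincide. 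Hence ${\cal R}_r(\Cc)$ is a ribbon category, and it only remains to translate, item by item, the structure of ${\cal R}_l(\ov{\Cc})$ recorded in \thref{ribbonconstr} back into the language of $\Cc$.

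For the objects: an object of ${\cal R}_l(\ov{\Cc})$ is a triple $(V, c_{V, -}, \eta_V)$ with $c_{V, X}\colon V\,\ov{\ot}\, X\ra X\,\ov{\ot}\, V$; since $V\,\ov{\ot}\, X=X\ot V$ in $\Cc$, this datum is precisely a natural isomorphism $c_{X, V}\colon X\ot V\ra V\ot X$, i.e.\ a right half-braiding on $\Cc$. Condition \equref{center1} in $\ov{\Cc}$ then reads $c_{\un{1}, V}\equiv\Id_V$ and $c_{X\ot Y, V}=(c_{X, V}\ot\Id_Y)(\Id_X\ot c_{Y, V})$; condition \equref{ribbon3} reads $c_{X, V}(\Id_X\ot\eta_V)=(\eta_V\ot\Id_X)c_{X, V}$; and \equref{msquaretwist}, which by \prref{prdefrlC} is the second equality in \equref{etamorgen}, reads \equref{condetaforRrightC}. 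Renaming $\eta_V$ as $\theta_V$ yields exactly the stated objects, and the translation of the morphism condition is immediate.

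The same reversal dictionary ($\ot\leftrightarrow\ov{\ot}$, hence reversal of the order of the tensor factors; ${\rm ev}_V\leftrightarrow{\rm ev}'_V$ and ${\rm coev}_V\leftrightarrow{\rm coev}'_V$; the object $V^*$ of $\ov{\Cc}$ $\leftrightarrow$ the object ${}^*V$ of $\Cc$; mirror braiding) turns \equref{center2} into $c_{X, V\ot W}=(\Id_V\ot c_{X, W})(c_{X, V}\ot\Id_W)$, turns \equref{etamonoidal} into $\theta_{V\ot W}=(\theta_V\ot\theta_W)c_{W, V}c_{V, W}$, turns the braiding \equref{braidingribbconstr} into $c_{V, W}$ regarded as a morphism in $\Cc$, turns the left-dual data \equref{braiddualobjribb}--\equref{twistdualobjribb} into the right-dual data $({}^*V, c_{-, {}^*V}, \theta_{{}^*V})$ with $\theta_{{}^*V}={}^*(\theta_V)$ and evaluation/coevaluation given by ${\rm ev}'_V$, ${\rm coev}'_V$, and turns the twist \equref{twistribbonconstr} into $\theta_V\colon(V, c_{-, V}, \theta_V)\ra(V, c_{-, V}, \theta_V)$. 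Since all the ribbon axioms already hold for ${\cal R}_l(\ov{\Cc})$ by \thref{ribbonconstr}, nothing further needs to be checked. The only genuine work — and the step I expect to be the main obstacle — is bookkeeping: one must make sure that the mirror braiding of $\ov{\Cc}$, the half-braiding of its left centre, and the inverse braidings occurring in \equref{braiddualobjribb} and \equref{condetaforRrightC} are all reversed consistently, so that the orders of the tensor factors and the primes on the (co)evaluation morphisms come out exactly as in the statement.
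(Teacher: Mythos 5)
Your proposal is correct and follows exactly the route the paper takes: the paper's entire proof is the one-line observation that everything follows from the definition ${\cal R}_r(\Cc):=\ov{{\cal R}_l(\ov{\Cc})}$ together with \prref{prdefrlC} and \thref{ribbonconstr}, and your write-up simply makes the reversal dictionary explicit. The bookkeeping you flag as the main obstacle is indeed all there is to it, and you have carried it out consistently.
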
  
\begin{proof}
As ${\cal R}_r(\Cc):=\ov{{\cal R}_l(\ov{\Cc})}$, everything follows from the above comments and results.
\end{proof}
\subsection{A class of ribbon quasi-Hopf algebras}\selabel{specexpribbqHA}
In general, $H[\theta, \theta^{-1}]$ considered in \sseref{balancedquasiHopf} is not a quasi-Hopf algebra, and so neither a ribbon 
quasi-Hopf algebra. To "make" it ribbon, we have to consider a quotient of it. Actually,  
we have to consider the $k$-algebra $H(\theta):=\frac{H[\theta]}{\le \theta^2 - uS(u)\ri}$ instead 
of $H[\theta, \theta^{-1}]$, where $H[\theta]$ is 
the free $k$-algebra generated by $H$ and $\theta$ with relations $h\theta=\theta h$, for all $h\in H$,  
and $u$ is as in \equref{elmu}. 

In what follows, we still denote by $\theta$ the class in $H(\theta)$ of $\theta$. 

\begin{proposition}\prlabel{ribbonquasiHopffromQT} 
If $(H, R)$ is a QT quasi-Hopf algebra then $H(\theta)$ is a quasi-Hopf algebra with structure determined by 
$\Delta\mid_H=\Delta_H$, $\va\mid_H=\va_H$, $S\mid_H=S$, 
\[
\Delta(\theta)=(\theta\ot \theta)(R_{21}R)^{-1}~,~\va(\theta)=1~,~S(\theta)=\theta,
\] 
and the reassociator and distinguished elements that define the antipode equal to those of $H$. 
Furthermore, $(H(\theta), R)$ is QT and $\theta^{-1}$ defines a ribbon twist on ${}_{H(\theta)}{\cal M}^{\rm fd}$ 
as in \equref{etaforribbonqHa}. 
\end{proposition}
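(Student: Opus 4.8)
The plan is to realize $H(\theta)$ as a quotient of the balanced quasi-bialgebra $H[\theta,\theta^{-1}]$ of \prref{ribbquasibialgfromQTones}, to check that the relevant ideal is a Hopf ideal, to extend the antipode, to see that $R$ still defines a quasitriangular structure, and finally to recognize $\theta^{-1}$ as the ribbon element via \prref{whenQTisribbon}(ii). First I would record that, since $(H,R)$ is QT, the antipode $S$ is bijective and $S^2(h)=uhu^{-1}$ (see \equref{ssinau}); this forces conjugation by $S(u)$ to be $S^{-2}$, whence $S^2(u)=u$ and $uS(u)=S(u)u$ is a \emph{central}, invertible element of $H$ with $S(uS(u))=S^2(u)S(u)=uS(u)$. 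Hence $H(\theta)=H[\theta,\theta^{-1}]/\langle\theta^2-uS(u)\rangle$ is a well-defined algebra, free of rank two as a left $H$-module, with $\theta$ central and $\theta^{-1}=\theta(uS(u))^{-1}$.

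For the quasi-bialgebra structure, by \prref{ribbquasibialgfromQTones} it is enough to show that $I:=\langle\theta^2-uS(u)\rangle$ is a coideal; then $H(\theta)$ inherits the coproduct, counit and reassociator $\Phi_H$. The counit part follows once $\va(u)=1$ is verified (group the counits of the factors in \equref{elmu} using $(\va\ot\Id)(R)=(\Id\ot\va)(R)=1$, \equref{q4} and \equref{q6}), since then $\va(uS(u))=\va(u)^2=1=\va(\theta)^2$. Since in $H[\theta,\theta^{-1}]$ one has $\Delta(\theta)=(\theta\ot\theta)(R_{21}R)^{-1}$ (the braiding factor commuting with $\theta\ot\theta$ by centrality), $\Delta(\theta^2)=(\theta^2\ot\theta^2)(R_{21}R)^{-2}$, so modulo $I\ot H(\theta)+H(\theta)\ot I$ this equals $(uS(u)\ot uS(u))(R_{21}R)^{-2}$ and the whole point reduces to the honest identity in $H\ot H$
\[
\Delta(uS(u))=(uS(u)\ot uS(u))(R_{21}R)^{-2}.
\]
This I would obtain from \equref{qrib7} and \equref{qrib6}: writing $Q=R_{21}R$ one has $\Delta(u)\Delta(S(u))=Q^{-1}f^{-1}(S\ot S)(f_{21})(u\ot u)Q^{-1}(S(u)\ot S(u))(S\ot S)(f_{21}^{-1})f$, and pulling out the central factor $uS(u)\ot uS(u)$ and using that conjugation by $u\ot u$ is $S^2\ot S^2$ reduces the claim to the statement that $(S\ot S)(f)f_{21}^{-1}$ commutes with $(S\ot S)(Q)$, which in turn follows from the compatibility relations \equref{ca}, \equref{phig} for the Drinfeld twist (equivalently, from $S\colon H^{\rm op,cop}\ra H_f$ being a quasi-Hopf morphism) together with \equref{qt3}. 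Granting this, $\theta^2\ot\theta^2-uS(u)\ot uS(u)=(\theta^2-uS(u))\ot\theta^2+uS(u)\ot(\theta^2-uS(u))$ lies in $I\ot H(\theta)+H(\theta)\ot I$, so $I$ is a coideal and $H(\theta)$ is a quasi-bialgebra with the asserted $\Delta$, $\va$ and reassociator.

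Next I would extend $S$ by $S(\theta):=\theta$; this is a well-defined anti-algebra map because $\theta$ is central and $S(uS(u))=uS(u)=\theta^2$. Axiom \equref{q6} holds in $H(\theta)$ automatically, as it only involves $\Phi_H$ and $\a,\b,S|_H$. For \equref{q5}, writing $H(\theta)=H\oplus H\theta$ and using linearity, it suffices to treat $h\in H$ (inherited) and $h=\theta$; using $S(\theta)=\theta$, centrality of $\theta$, $\theta^2=uS(u)$ and $\va(\theta)=1$, the case $h=\theta$ collapses to the two identities $S(p^1)\a p^2=(uS(u))^{-1}\a$ and $p^1\b S(p^2)=\b(uS(u))^{-1}$ for $(R_{21}R)^{-1}=p^1\ot p^2$, which I would derive from \equref{extr}, the relation $R^1\b S(R^2)=S(\b u)$, and their $R^{-1}$-analogues in \cite{bn3}. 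Thus $H(\theta)$ is a quasi-Hopf algebra. Quasitriangularity is then easy: \equref{qt1} and \equref{qt2} are verbatim the relations valid in $H$, everything involved lying in the copy of $H$; and \equref{qt3}, $\Delta^{\rm cop}(h)R=R\Delta(h)$, holds for $h\in H$ by hypothesis and for $h=\theta$ by the short computation $\Delta^{\rm cop}(\theta)R=(\theta\ot\theta)R_{21}^{-1}R^{-1}R=(\theta\ot\theta)R_{21}^{-1}=R(\theta\ot\theta)(R_{21}R)^{-1}=R\Delta(\theta)$ (using $\tau(R_{21}R)=RR_{21}$), hence for all of $H(\theta)$, the set of such $h$ being multiplicatively closed.

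Finally, $\theta^{-1}$ is a central invertible element of $H(\theta)$ with $S(\theta^{-1})=\theta^{-1}$, and inverting $\Delta(\theta)=(\theta\ot\theta)(R_{21}R)^{-1}$ gives $\Delta(\theta^{-1})=(R_{21}R)(\theta^{-1}\ot\theta^{-1})=(\theta^{-1}\ot\theta^{-1})R_{21}R$; so $\theta^{-1}$ satisfies the conditions of \deref{defribbQHA}(ii) and, by \prref{whenQTisribbon}(ii), defines a ribbon twist acting by $v\mapsto\theta^{-1}\cdot v$ on ${}_{H(\theta)}{\cal M}^{\rm fd}$ as in \equref{etaforribbonqHa}. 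The main obstacle is the identity $\Delta(uS(u))=(uS(u)\ot uS(u))(R_{21}R)^{-2}$ --- equivalently the $\a,\b$-identities for $\theta$ in \equref{q5} --- which is where all the quasi-Hopf subtleties (the Drinfeld twist $f$) get absorbed; the reduction above to ``$(S\ot S)(f)f_{21}^{-1}$ commutes with $(S\ot S)(R_{21}R)$'' is the cleanest route I see.
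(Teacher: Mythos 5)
Your proposal is correct and follows essentially the same route as the paper: realize $H(\theta)$ as a quotient of $H[\theta,\theta^{-1}]$, establish $\Delta(uS(u))=(uS(u)\ot uS(u))(R_{21}R)^{-2}$ from \equref{qrib7}--\equref{qrib6} and the centrality of $uS(u)$, check $S(\theta)=\theta$ is well defined, verify the $\a,\b$ axioms for $\theta$ via \equref{extr} and (\ref{fu18}), confirm quasitriangularity by the same one-line computation, and invoke \prref{whenQTisribbon}(ii) for the ribbon twist. The only difference is one of detail: you leave the two computational cores (the $\Delta(uS(u))$ identity and the identities $S(p^1)\a p^2=(uS(u))^{-1}\a$, $p^1\b S(p^2)=\b(uS(u))^{-1}$) as reductions to the correct known relations, whereas the paper writes out the latter two in full.
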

\begin{proof}
One can see easily that $\Delta(h)R_{21}R=R_{21}R\Delta(h)$, for all $h\in H$. So by \equref{qrib7} and 
\equref{qrib6} one compute that 
\[
\Delta(uS(u))=f^{-1}(S\ot S)(f_{21})(uS(u)\ot uS(u))(S\ot S)(f_{21}^{-1})f(R_{21}R)^{-2},
\] 
and because $uS(u)$ is central in $H$ we get $\Delta(uS(u))=(uS(u)\ot uS(u))(R_{21}R)^{-2}$. 
This shows that $\Delta$ is well defined on $\theta$. Also, it follows from 
\prref{ribbquasibialgfromQTones} that $H(\theta)$ is a quasi-bialgebra. 

Furthermore, since $S(\theta^2-uS(u))=S(\theta)^2-S^2(u)S(u)=\theta^2-uS(u)$ we deduce that 
$S$ is well defined on $\theta$, too. So it remains to show the equalities 
\[
S(\theta_1)\a\theta_2=\alpha~~\mbox{and}~~\theta_1\b S(\theta_2)=\b~.
\] 

The equality in \equref{extr} can be rewritten as $S(\a\ov{R}^2)u\ov{R}^1=\a$ or, equivalently, 
as $S(\ov{R}^1)\a \ov{R}^2=S^{-1}(\a u^{-1})=S(u^{-1}\a )$, cf. \equref{ssinau}. Hence 
\begin{eqnarray*}
S(\theta_1)\a \theta_2&=&
S(\ov{R}^1\ov{r}^2)\a \ov{R}^2\ov{r}^1\theta^2\\
&=&S(u^{-1}\a \ov{r}^2)\ov{r}^1\theta^2\\
&\equal{\equref{ssinau}}&S(u^{-1}S(\ov{r}^1)\a \ov{r}^2)\theta^2\\
&=&S^2(u^{-1}\a)S(u^{-1})\theta^2\\
&\equal{\equref{ssinau}}&\a u^{-1}S(u^{-1})\theta^2=\a, 
\end{eqnarray*}
as required; in the last equality we used that $uS(u)=S(u)u$.  

Notice that the formula in (\ref{fu18}) is equivalent to $\ov{R}^1S(\ov{R}^2\b u)=\b$, 
and therefore with $\ov{R}^2\b S(\ov{R}^1)=u^{-1}S(\b)$, because of \equref{ssinau}. The latter gives us 
\begin{eqnarray*}
\theta_1\b S(\theta_2)&=&\ov{R}^1\ov{r}^2\b S(\ov{R}^2\ov{r}^1)\theta^2\\
&=&\ov{R}^1u^{-1}S(\ov{R}^2\b)\theta^2\\
&\equal{\equref{ssinau}}&
u^{-1}S(\ov{R}^2\b S(\ov{R}^1))\theta^2\\
&=&u^{-1}S(u^{-1}S(\b))\theta^2\\
&\equal{\equref{ssinau}}&
u^{-1}S(u^{-1})\theta^2\b=\b,
\end{eqnarray*} 
since $uS(u)=S(u)u$. Finally, $R$ is an $R$-matrix for $H(\theta)$ because so is for $H$ and 
\begin{eqnarray*}
R\Delta(\theta)&=&R(R_{21}R)^{-1}(\theta\ot \theta)\\
&=&R_{21}^{-1}(\theta\ot \theta)\\
&=&(RR_{21})^{-1}R(\theta\ot \theta)
=\Delta^{\rm cop}(\theta)R.
\end{eqnarray*}
Clearly $\theta^{-1}$ defines a ribbon structure for $(H(\theta), R)$, and this completes the proof.  
\end{proof}

Our next goal is to show that the category of finite dimensional left modules over $H(\theta)$ 
can be identified with the category ${\cal R}({}_H{\cal M}^{\rm fd}, c)$, where $c$ is defined by \equref{braiQT}. 

\begin{theorem}\thlabel{qtreprasrepresoverribbon}
Let $(H, R)$ be a QT quasi-Hopf algebra, so ${}_H{\cal M}^{\rm fd}$ is a rigid braided category. 
Then ${\cal R}({}_H{\cal M}^{\rm fd}, c)$ identifies as a ribbon category with ${}_{H(\theta)}{\cal M}^{\rm fd}$.  
\end{theorem}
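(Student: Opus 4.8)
The plan is to construct an explicit isomorphism of ribbon categories $\Psi\colon {\cal R}({}_H{\cal M}^{\rm fd},c)\ra {}_{H(\theta)}{\cal M}^{\rm fd}$. On objects, $\Psi(V,\eta_V)$ is the space $V$ carrying its given $H$-action together with $\theta^{\pm}\cd v=\eta_V^{\mp}(v)$, in exact analogy with \prref{balancedqHacase}; on a morphism $f$ of ${\cal R}({}_H{\cal M}^{\rm fd},c)$ we set $\Psi(f)=f$. Since $\eta_V$ is $H$-linear, $\theta$ commutes with $H$, so $\theta$ together with the $H$-action define a genuine $H(\theta)$-action as soon as $\theta^2$ acts as $uS(u)$; and because a morphism of ${\cal R}({}_H{\cal M}^{\rm fd},c)$ is an $H$-linear map intertwining the $\eta$'s while an $H(\theta)$-linear map is an $H$-linear map commuting with the action of $\theta$, these two notions coincide under $\Psi$, so $\Psi$ will automatically be fully faithful. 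Thus the whole statement reduces to two points: (a) $\Psi$ is well defined and bijective on objects, i.e. $\eta_V^{-2}$ acts as multiplication by $uS(u)$; and (b) $\Psi$ matches the ribbon structure of \thref{ribbonfrombraided} with the one coming from \prref{ribbonquasiHopffromQT}.

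The crux is (a), and more precisely the identity that, in ${}_H{\cal M}^{\rm fd}$ equipped with the braiding \equref{braiQT} and the left duality recalled in \reref{sovereinstrleftmodqHa}, the right-hand composite of \equref{etamorgen} — equivalently \equref{msquaretwist} — applied to $v\in V$ equals $uS(u)\cd v$, with $u$ the Drinfeld element \equref{elmu}. One direct route is to strictify \equref{etamorgen}, inserting the reassociators needed to interpret it in ${}_H{\cal M}^{\rm fd}$, then evaluate $({\rm ev}_V\ot\Id_V)(\Id_{V^*}\ot c^{-1}_{V,V})(c_{V,V^*}{\rm coev}_V\ot\Id_V)$ on dual bases and collapse the resulting sum to an element of $H$; the quasitriangularity axioms \equref{qt1} and \equref{qt2}, together with \equref{ssinau}, \equref{extr} and \equref{fu18} which govern the interaction of $u$, $S$ and the $R$-matrix, are exactly what identify that element with $uS(u)$, in the same spirit as the verifications inside \prref{ribbonquasiHopffromQT}. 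A shorter, more conceptual route is to note that this composite is a natural endomorphism $\Theta$ of $\Id_{{}_H{\cal M}^{\rm fd}}$ depending only on the braided rigid structure; for $W\in {}_{H(\theta)}{\cal M}^{\rm fd}$ the braiding and left duality are inherited from those of $H$, so $\Theta_W$ computes the inverse square of the ribbon twist $w\mapsto\theta^{-1}w$ of \prref{ribbonquasiHopffromQT}, whence $\Theta_W=\theta^2\cd(-)=uS(u)\cd(-)$ by \prref{balancedfaarfromribbon}; and since an arbitrary $V\in {}_H{\cal M}^{\rm fd}$ is a retract of $V\oplus V$, which underlies an $H(\theta)$-module with $\theta$ acting by $(v_1,v_2)\mapsto(v_2,\,uS(u)\cd v_1)$, naturality of $\Theta$ forces $\Theta_V=uS(u)\cd(-)$ as well. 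Either way $\Psi$ is well defined on objects and bijective there, a quasi-inverse sending $W\in {}_{H(\theta)}{\cal M}^{\rm fd}$ to $(W\!\mid_H,\ w\mapsto\theta^{-1}w)$, which lies in ${\cal R}({}_H{\cal M}^{\rm fd},c)$ for the same reason.

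For (b) one records a few short computations. Tensor products are preserved because $c_{W,V}c_{V,W}$ acts on $V\ot W$ as $R_{21}R\in H\ot H$, so $\eta_{V\ot W}=(\eta_V\ot\eta_W)c_{W,V}c_{V,W}$ acts as $(\theta^{-1}\ot\theta^{-1})(R_{21}R)=(R_{21}R)(\theta^{-1}\ot\theta^{-1})=\Delta(\theta^{-1})$, using centrality of $\theta$ in $H(\theta)$ and $\Delta(\theta)=(\theta\ot\theta)(R_{21}R)^{-1}$ from \prref{ribbonquasiHopffromQT}; the unit objects match since $\va(\theta)=1$; the braidings literally coincide, both being given by $R$; for duals, since $S(\theta)=\theta$ the ribbon twist $f\mapsto\theta^{-1}\cd f$ of ${}_{H(\theta)}{\cal M}^{\rm fd}$ on $V^*$ equals $f\mapsto f\circ\eta_V=(\eta_V)^*=\eta_{V^*}$, while ${\rm ev}_V$ and ${\rm coev}_V$ are unchanged because $\a,\b$ lie in $H$; and the twists agree by construction. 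Assembling these, $\Psi$ is an isomorphism of ribbon categories. The main obstacle is plainly point (a): showing that the abstract formula \equref{etamorgen} for $\eta_V^{-2}$ specializes, over a QT quasi-Hopf algebra, to multiplication by $uS(u)$ — all of the reassociator bookkeeping is concentrated there, and it is precisely the identity that makes the quotient defining $H(\theta)$ the correct one.
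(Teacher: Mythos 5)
Your proposal is correct and follows the same overall skeleton as the paper: the same functor $(V,\eta_V)\mapsto V$ with $\theta$ acting by $\eta_V^{-1}$ (inverse $W\mapsto (W,\,w\mapsto\theta^{-1}w)$), the same reduction of the whole statement to the identity ``$\eta_V^{-2}=uS(u)\cdot(-)$'', and essentially the same verifications for the monoidal, braided, dual and twist data in your point (b). The genuine difference is how the key identity is established. The paper proves it by a direct seven-line computation: it strictifies the composite in \equref{etamorgen} to the element $X^1r^2R^1\b S(X^2r^1R^2)\a X^3$, rewrites it via $q_R$, and collapses it to $uS(u)$ using (\ref{fu18}), \equref{ssinau} and \equref{q6} --- this is your route (i), which you describe accurately but do not carry out. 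Your route (ii) replaces that computation entirely: you observe that the composite is a canonical natural endomorphism $\Theta$ of $\Id_{{}_H{\cal M}^{\rm fd}}$, that on underlying modules of $H(\theta)$-modules it must equal $\theta^2\cdot(-)=uS(u)\cdot(-)$ by \prref{ribbonquasiHopffromQT} together with \prref{balancedfaarfromribbon} (since the forgetful functor preserves the braiding and the duality data), and that every $V$ is an $H$-linear retract of $V\oplus V$ carrying the $H(\theta)$-action $\theta(v_1,v_2)=(v_2,uS(u)v_1)$, so naturality forces $\Theta_V=uS(u)\cdot(-)$ in general. This argument is valid and there is no circularity, because \prref{ribbonquasiHopffromQT} is proved by pure algebra beforehand; what it buys is that all of the reassociator and $R$-matrix bookkeeping is absorbed into results already on record, at the price of invoking the naturality of $\Theta$ on all of ${}_H{\cal M}^{\rm fd}$ (not just on maps intertwining twists), which you assert rather than prove --- it is a standard consequence of the naturality of $c$ and the dinaturality of ${\rm ev}$ and ${\rm coev}$, but a line acknowledging this would make the argument airtight.
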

\begin{proof}
By taking $\Cc={}_H{\cal M}^{\rm fd}$ in \deref{ribbfrombraidwithld}, we deduce that an object of 
the category ${\cal R}({}_H{\cal M}^{\rm fd}, c)$ is a pair $(V, \eta_V)$ consisting of a finite dimensional 
left $H$-module $V$ and an $H$-automorphism $\eta_V$ of $V$ such that 
\begin{eqnarray*}
\eta^{-2}(v)&=&X^1r^2R^1\b S(X^2r^1R^2)\a X^3\cdot v\\
&=&q^1r^2R^1\b S(q^2r^1R^2)\cdot v\\
&\equal{(\ref{fu18})}&S(q^2r^1\b uS^{-1}(q^1r^2))\cdot v\\
&\equal{\equref{ssinau}}&S(q^2r^1\b S(q^1r^2)u)\cdot v\\
&\equal{(\ref{fu18})}&S(q^2S(q^1\b u)u)\cdot v\\
&\equal{\equref{ssinau}}&S(u)S^2(q^1\b S(q^2)u)\cdot v\\
&\equal{\equref{q6}}&S(u)u\cdot v=uS(u)\cdot v,  
\end{eqnarray*} 
for all $v\in V$. Thus objects of ${\cal R}({}_H{\cal M}^{\rm fd}, c)$ are pairs $(V, \eta_V)$ 
consisting of a finite dimensional left $H$-module and an $H$-automorphism $\eta_V$ of $V$ 
satisfying $\eta^2(v)=(uS(u))^{-1}\cdot v$, for all $v\in V$. 

Clearly, a morphism $f: (V, \eta_V)\ra (W, \eta_W)$ is a left $H$-linear morphism $f: V\ra W$ 
such that $\eta_Wf=f\eta_V$. 

Let $F: {\cal R}({}_H{\cal M}^{\rm fd}, c)\ra {}_{H(\theta)}{\cal M}^{\rm fd}$ be the functor defined 
as follows: $F(V, \eta_V)=V$ regarded as $H(\theta)$-module via the $H$-action on $V$ and 
$\theta\cdot v=\eta^{-1}_V(v)$; $F$ acts as identity on morphisms. 

We can easily see that $\theta^2=uS(u)$ together with  $\eta^{-2}(v)=(uS(u))^{-1}\cdot v$, for all $v\in V$, 
implies that $F$ is a well defined functor. It provides an isomorphism of categories, its inverse being the  
functor $G: {}_{H(\theta)}{\cal M}^{\rm fd}\ra {\cal R}({}_H{\cal M}^{\rm fd}, c)$ given 
by $G(V)=(V, \eta_V: V\ni v\mapsto \theta^{-1}\cdot v\in V)$, for all $V\in {}_{H(\theta)}{\cal M}^{\rm fd}$.   

The functor $F$ is monoidal since 
\begin{eqnarray*}
\theta\cdot (v\ot w)&=&\eta^{-1}_{V\ot W}(v\ot w)\\
&=&(R_{21}R)^{-1}(\eta^{-1}_V(v)\ot \eta^{-1}_W(w))\\
&=&(R_{21}R)^{-1}(\theta\cdot v\ot \theta\cdot w)
=\theta_1\cdot v\ot \theta_2\cdot w,
\end{eqnarray*}  
for all $V, W\in {\cal R}({}_H{\cal M}^{\rm fd}, c)$, $v\in V$ and $w\in W$. Furthermore, $F$ 
is braided because for both categories the braiding is defined by $c$. 

The ribbon structure $\widetilde{\eta}$ of ${}_{H(\theta)}{\cal M}^{\rm fd}$ is induced by the element 
$\theta^{-1}$, in the sense that $\widetilde{\eta}_V(v)=\theta^{-1}\cdot v=\eta_V(v)$, for all 
$v\in V\in {}_{H(\theta)}{\cal M}^{\rm fd}$. Otherwise stated, the two categories have the same 
ribbon structure, and therefore $F$ is a ribbon isomorphism functor. 
Observe also that $F$ is compatible with the left rigid monoidal structures on  
${\cal R}({}_H{\cal M}^{\rm fd}, c)$ and ${}_{H(\theta)}{\cal M}^{\rm fd}$. More precisely, we have 
$F(V^*, \eta_{V^*})=V^*$ with left $H(\theta)$-module structure induced by that of the left $H$-module 
and $\theta\cdot v^*=\eta^{-1}_{V^*}(v^*)=(\eta^{-1}_V)^*(v^*)=v^*\circ \eta^{-1}_V=v^*(\theta\cdot)
=v^*(S(\theta)\cdot)$, for all $v^*\in V^*$, as required.  
\end{proof}

\begin{remark}
If $(H, R)$ is QT then
$\tilde{R}=R^{-1}_{21}=\ov{R}^2\ot \ov{R}^1$ is another
$R$-matrix for $H$. In addition, if $\tilde{u}$ is the element 
\equref{elmu} corresponding to $(H, \tilde{R})$ then by the relation (6.23) in \cite{btfact} we have that $\tilde{u}=S(u^{-1})$, 
so $\tilde{u}S(\tilde{u})=(uS(u))^{-1}$. Therefore, if $\tilde{c}$ is the braiding 
on ${}_H{\cal M}^{\rm fd}$ defined by $\tilde{R}$ then ${\cal R}({}_H{\cal M}^{\rm fd}, \tilde{c})$ 
is a ribbon category that is isomorphic to ${}_{H(\theta)}{\cal M}^{\rm fd}$, too. 
To see this, observe that an object of ${\cal R}({}_H{\cal M}^{\rm fd}, \tilde{c})$ 
is a pair $(V, \theta_V)$ consisting of $V\in {}_H{\cal M}^{\rm fd}$ and an automorphism 
$\theta_V$ of the left $H$-module $V$ such that $\theta^2_V(v)=uS(u)\cdot v$, for all $v\in V$. 
It is clear at this point that $(V, \eta_V)\mapsto (V, \theta_V:=\eta_V^{-1})$ 
defines the desired isomorphism of categories. 
\end{remark}

\begin{corollary}\colabel{ribbdrinffromadj}
Let $H$ be a finite dimensional quasi-Hopf algebra and $D(H)$ its quantum double. 
If $D(H, \theta):=D(H)(\theta)$ then 
\[
{\mf R}_r({}_H{\cal M}^{\rm fd})\simeq {\cal R}({}_H{\YD}^{H{\rm fd}}, {\mf c})\simeq 
{}_{D(H, \theta)}{\cal M}^{\rm fd} 
\]
as ribbon categories, where ${\mf c}$ is the braiding defined in \equref{lryd4}.  
\end{corollary}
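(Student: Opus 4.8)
The plan is to deduce both isomorphisms from results already proved in the excerpt, exhibiting \coref{ribbdrinffromadj} as an instance of \thref{qtreprasrepresoverribbon} for the quantum double together with the standard braided identification of $D(H)$-modules with Yetter--Drinfeld modules. The one ingredient not quite explicit in the text that I would first record is the functoriality of ${\cal R}(-,-)$: if $\Phi\colon(\Cc,c)\ra(\Cc',c')$ is a braided monoidal equivalence of left rigid braided (strict) categories, then $(V,\eta_V)\mapsto(\Phi(V),\Phi(\eta_V))$, $f\mapsto \Phi(f)$, is a ribbon equivalence ${\cal R}(\Cc,c)\ra{\cal R}(\Cc',c')$. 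Indeed $\Phi$ preserves left duals up to canonical isomorphism and preserves the braiding, so it carries the endomorphism on the right-hand side of \equref{etamorgen} for $V$ to the one for $\Phi(V)$; the monoidal, braided and dual structures furnished by \thref{ribbonfrombraided} on the two sides then match. (Alternatively this is \prref{UnivRibbonCateg} applied to $\Pi\circ {\cal R}'\circ\Phi$.)

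For the first $\simeq$: by definition ${\mf R}_r(\Cc):=\ov{{\cal R}_l(\ov{\Cc})}$, and ${\mf R}_l(\Cc)={\cal R}({\cal Z}_l(\Cc),c)$ as recorded in \prref{prdefrlC}, so ${\mf R}_r({}_H\Mm^{\rm fd})={\cal R}({\cal Z}_r({}_H\Mm^{\rm fd}),c)$ as ribbon categories. By \cite{bcp} (see also \reref{balancedFromQuasiHopf}) the right centre ${\cal Z}_r({}_H\Mm^{\rm fd})$ is isomorphic, as a rigid braided category, to ${}_H{\YD}^{H{\rm fd}}$ equipped with the braiding ${\mf c}$ of \equref{lryd4}. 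Applying the functoriality of ${\cal R}(-,-)$ above to this isomorphism yields ${\mf R}_r({}_H\Mm^{\rm fd})\simeq{\cal R}({}_H{\YD}^{H{\rm fd}},{\mf c})$ as ribbon categories.

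For the second $\simeq$: apply \thref{qtreprasrepresoverribbon} to the QT quasi-Hopf algebra $(D(H),R)$. Since $D(H,\theta)=D(H)(\theta)$ by definition, this gives ${\cal R}({}_{D(H)}\Mm^{\rm fd},c)\simeq{}_{D(H,\theta)}\Mm^{\rm fd}$ as ribbon categories, where $c$ is the braiding determined by the $R$-matrix of $D(H)$. By \cite{bcp,bpvo} the braided monoidal isomorphism ${}_{D(H)}\Mm^{\rm fd}\simeq({}_H{\YD}^{H{\rm fd}},{\mf c})$ carries that $c$ to ${\mf c}$; transporting along it, again via the functoriality of ${\cal R}(-,-)$, produces ${\cal R}({}_H{\YD}^{H{\rm fd}},{\mf c})\simeq{}_{D(H,\theta)}\Mm^{\rm fd}$. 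Chaining the two isomorphisms finishes the proof.

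The step I expect to be the real obstacle is the bookkeeping needed to make the two transports genuinely compatible: one must pin down precisely which $R$-matrix of $D(H)$ is used, so that the braiding it induces on ${}_{D(H)}\Mm^{\rm fd}$ is sent to exactly ${\mf c}$ of \equref{lryd4} under the Yetter--Drinfeld identification, and, correspondingly, check that the left rigid structure of ${}_{D(H)}\Mm^{\rm fd}$ entering \thref{qtreprasrepresoverribbon} matches the one on ${}_H{\YD}^{H{\rm fd}}$ under the same identification. Only then do the conditions \equref{etamorgen} and \equref{msquaretwist}, and the central element $\theta^2=u_{D(H)}S(u_{D(H)})$ defining $D(H,\theta)$, correspond on both sides. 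Once the explicit $D(H)$-structure and its $R$-matrix from \cite{hnRMP,btRecon} (and the braided identification of \cite{bcp,bpvo}) are invoked, this is routine verification rather than a new computation.
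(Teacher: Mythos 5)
Your proposal is correct and follows essentially the same route as the paper: the first isomorphism from ${\mf R}_r({}_H{\cal M}^{\rm fd})={\cal R}({\cal Z}_r({}_H{\cal M}^{\rm fd}), c)$ together with the braided identification of the right centre with $({}_H{\YD}^{H{\rm fd}},{\mf c})$, and the second from the braided isomorphism $({}_H{\YD}^{H{\rm fd}}, {\mf c})\cong{}_{D(H)}{\cal M}^{\rm fd}$ combined with \thref{qtreprasrepresoverribbon} applied to $D(H)$. The only difference is that you make explicit the transport of the ${\cal R}(-,-)$ construction along a braided equivalence, which the paper leaves implicit.
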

\begin{proof}
The first isomorphism can be deduced from the definition 
${\mf R}_r({}_H{\cal M}^{\rm fd})={\cal R}({\cal Z}_r({}_H{\cal M}^{\rm fd}), c)$ and the 
braided isomorphism between ${\cal Z}_r({}_H{\cal M}^{\rm fd})$ and ${}_H{\YD}^{H{\rm fd}}$ 
established by \cite[Theorem 2.2]{bcp}. The second one follows from 
the braided isomorphism $({}_H{\YD}^{H{\rm fd}}, {\mf c})\cong ({}_{D(H)}{\cal M}^{\rm fd}, {\cal R}_D)$ 
established in \cite[Proposition 3.1]{btThRank}, and \thref{qtreprasrepresoverribbon}. 
\end{proof}


\end{document}